\def\R {\mathbb{R}}
\def\eps{\varepsilon}
\def\OP{obstacle problem }
\def\FB{free boundary }
\def\Qua{\mathcal{Q}}
\def\UQua{\mathcal{UQ}}
\def\Sph{\mathbb{S}^{d-1}}
\def\SPE{\mathcal{S}(p,\eps,1)}
\def\PosS{\{u>0\}}
\def\Singu{\Sigma(u)}
\def\Dee{D_{ee}}
\def\hh{\hat{h}}
\def\hu{\hat{u}}
\def\hO{\hat{O}}
\def\minx{\underline{x}'}
\newtheorem{prop}{Proposition}[section]
\newtheorem{thm}{Theorem}[section]
\newtheorem{lem}{Lemma}[section]
\theoremstyle{definition}
\newtheorem{defi}{Definition}[section]
\newtheorem{rem}{Remark}[section]
\numberwithin{equation}{section}
\title[Regularity of the singular set]{Regularity of the singular set in the fully nonlinear obstacle problem } 
\author{Ovidiu Savin}
\address{Department of Mathematics,	Columbia University, New York, USA}
\email{savin@math.columbia.edu}
\author{Hui Yu}
\address{Department of Mathematics,	Columbia University, New York, USA}
\email{ huiyu@math.columbia.edu}
\thanks{O.~S.~is supported by  NSF grant DMS-1500438.}
\begin{document}

\begin{abstract}
For the \OP involving a convex fully nonlinear elliptic operator, we show that the singular set in the \FB stratifies. The top stratum is locally covered by a $C^{1,\alpha}$-manifold, and the lower strata are covered by $C^{1,\log^\eps}$-manifolds. This recovers some of the recent regularity results due to Colombo-Spolaor-Velichkov \cite{CSV} and Figalli-Serra \cite{FSe} when the operator is the Laplacian.\end{abstract}

\maketitle

%\tableofcontents
%%%%%%%%%%%%%%%%%%%%%%%%%%%%%%%%%%%%%%%%%%%%%%%%%%%%%%%%%%%%%%%%%%%%%%%%%%%%%%%%%%%%%%%%%%%%%%%%%%%%%%
\section{Introduction}
The \textit{classical \OP}describes the equilibrium shape of an elastic membrane being pushed towards an impenetrable barrier. In its most basic form, the height of the membrane satisfies the following equations
\begin{equation*}
\Delta u=\chi_{\PosS} \text{ and }u\ge 0 \text{ in $\Omega$.}
\end{equation*}Here $\Omega$ is a given domain in $\R^d$, and $\chi_E$ denotes the characteristic function of the set $E$. The right-hand side of the first equation has a jump across the a priori unknown interface $\partial\PosS$, often called the \textit{free boundary}. 

Apart from its various industrial applications, many ideas and techniques developed for the  classical \OP have been crucial in the study of other \FB problems. In this sense, the classical \OP is the prototypical \FB problem.  As a result, it has been studied extensively during the past few decades. For many applications of the classical \OP and some related problems, see  Petrosyan-Shahgholian-Uraltseva \cite{PSU} and Ros-Oton \cite{R}.

As already observed by Br\'ezis-Kinderlehrer \cite{BK}, the solution $u$ enjoys the optimal $C^{1,1}_{loc}$ regularity. The interesting problem is to understand the regularity of the \FB $\partial\PosS.$ In that direction, Sakai first gave some results for the planar case in  \cite{Sak1, Sak2}. The theory in higher dimensions was developed by Caffarelli in \cite{C1, C2}, where he showed that around points on $\partial\PosS$ the solution $u$ has two possible types of behaviour. Either it behaves like a \textit{half-space solution}, $\frac{1}{2}\max\{x\cdot e,0\}^2$ for some $e\in\mathbb{S}^{d-1},$ or it behaves like a quadratic polynomial $\frac{1}{2}x\cdot Ax$ for some non-negative matrix $A$.

The points where the solution behaves like half-space solutions are called \textit{regular points}. Near such points, Caffarelli showed that the \FB is an analytic hypersurface \cite{C1}. His method is sufficiently robust that it has been adapted for regular points in many other problems, including the thin-obstacle problem in Athanasopoulos-Caffarelli-Salsa \cite{ACS}, the obstacle problem for integro-differential operators in Caffarelli-Salsa-Silvestre \cite{CSS}, the \OP for fully nonlinear operators in Lee \cite{L}, the \OP for fully nonlinear non-local operators in Caffarelli-Serra-Ros-Oton \cite{CSR}, and a very general class of unconstrained \FB problems in Figalli-Shahgholian \cite{FSh} and Indrei-Minne \cite{IM}.

The points where the solution behaves like quadratic polynomials are called \textit{singular points}. As shown by Schaeffer \cite{Sch}, the \FB can form cusps near these points. Nevertheless, certain structural results can be established for singular points. 

To be precise, let $\frac{1}{2}x\cdot A_{x_0}x$ denote the polynomial modelling the behaviour of $u$ around a singular point $x_0.$ Depending on the dimension of the kernel of $A_{x_0}$, the collection of singular points can be further divided into $d$ classes (\textit{strata}), the $k$th stratum being $$\Sigma^k(u)=\{x_0|\text{ $x_0$ is a singular point with $\dim ker(A_{x_0})=k$}\}.$$ 
The structural theorem by Caffarelli says that $\Sigma^k$ is locally covered by $C^{1}$-manifolds of dimension $k$ \cite{C2}.  His proof was based on the Alt-Caffarelli-Friedman formula in \cite{ACF}. An alternative proof was later found by Monneau \cite{M}, using the monotonicity formula bearing his name. 

Recently there has been quite some interest in improving this result. In two dimension, Weiss improved the regularity of the manifolds to $C^{1,\alpha}$  by introducing the  Weiss monotonicity formula \cite{W}. Based on the same formula, Colombo-Spolaor-Velichkov \cite{CSV} showed that in higher dimensions the manifolds are $C^{1,\log^\eps}$. The best result so far is in Figalli-Serra \cite{FSe}. By applying Almgren's monotonicity formula \cite{Alm}, they improved $C^{1,\log^\eps}$ to $C^{1,\alpha}$ for the manifolds covering the top stratum $\Sigma^{d-1}(u)$. They also showed that each stratum can be further divided into a `good' part and a `bad' part, where the former is covered by $C^{1,1}$ manifolds, and the latter is of lower dimension.

Despite these exciting new results, almost nothing is known about singular points for obstacle problems involving operators other than the Laplacian.  Comparing with the robust argument for regular points, all developments on singular points depend on various  monotonicity formulae. These are powerful but restricted, in the sense that they are not expected to hold for nonlinear operators or even for linear operators with coefficients of low regularity.  This same obstruction lies behind the lack of understanding of singular points in many other \FB problems. Consequently, it is important to develop new tools when monotonicity formulae are not available.

In this work, we develop a method for the study of singular points without relying on monotonicity formulae. In particular, this method works for the following \OP involving a convex fully nonlinear elliptic operator $F$ whose derivatives are H\"older continuous:\begin{equation}\label{OP}
\begin{cases}F(D^2u)=\chi_{\PosS}, &\\u\ge0,&
\end{cases} \text{ in $\Omega$.}
\end{equation} Here $\Omega$ is a domain in $\R^d$, and the solution $u$ is understood in the viscosity sense, see \cite{CC,L}. For a given boundary data, the solution $u$ is unique and can be obtained either as the least nonnegative supersolution to the the equation $F(D^2 u) \le 1$, or as the largest subsolution to $F(D^2 u) \ge \chi_{\{u>0\}}$.

Even for the case when $F$ is the Laplacian, our method is interesting as it provides a new approach to the regularity of the singular set.  At first reading, it might relieve many technical complications if the reader takes $F$ to be the Laplacian. 

For the singular points on the \FB $\partial\PosS$, our main result reads

\begin{thm}\label{MainResult}
Let $u$ be a solution to \eqref{OP}. 

For $k=0,1,\dots,d-2$, the $k$-th stratum of the singular points, $\Sigma^k(u)$, is locally covered by a $k$-dimensional $C^{1,\log^\eps}$-manifold.  The top stratum, $\Sigma^{d-1}(u)$, is locally covered by a $(d-1)$-dimensional $C^{1,\alpha}$-manifold. \end{thm}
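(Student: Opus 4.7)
The plan is to bypass the monotonicity formulae (Weiss, Monneau, Almgren) used in the Laplacian setting and instead install a compactness-based improvement-of-approximation scheme, with the convexity of $F$ triggering Evans--Krylov interior $C^{2,\alpha}$ estimates and the H\"older regularity of $DF$ ensuring Schauder theory for the linearized operator. At a singular point $x_0$, the $C^{1,1}$ bound for \eqref{OP} yields blow-up subsequential limits of $r^{-2}u(x_0+rx)$ which are convex nonnegative entire solutions of $F(D^2 v)=1$; Evans--Krylov combined with a Liouville argument identifies each such limit as a quadratic $Q_A(x)=\tfrac{1}{2} x\cdot A x$ with $A\ge 0$ and $F(A)=1$. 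The stratification $\Sigma^k(u)$ is then indexed by $k=\dim\ker A_{x_0}$, and uniqueness of $A_{x_0}$ will come out as a by-product of the quantitative scheme below.

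\textbf{Improvement dichotomy.} The main iterative step is this: for $x_0\in\Sigma^k(u)$, if at scale $r$ the solution $u$ is within $\eta r^2$ of an admissible quadratic $Q$ (meaning $F(D^2 Q)=1$, $Q\ge 0$, and $\dim\ker D^2 Q=k$), then at a smaller scale $\lambda r$ there is a new admissible quadratic $Q'$ close to $Q$ and within $\mu_k\eta(\lambda r)^2$ of $u$. I prove this by contradiction and compactness: a hypothetical counterexample sequence, renormalized by $\eta_j r_j^2$, gives in the limit a function $w$ on $\{Q_\infty>0\}$ that solves a uniformly elliptic linear equation with H\"older coefficients; interior $C^{2,\alpha}$ bounds for $w$, together with the admissibility constraint on $Q'$ and the fact that $u$ must vanish on the limiting contact set, produce a quadratic correction that contradicts non-improvement. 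The contraction factor $\mu_k$ can be taken of order $\lambda^\alpha$ when $k=d-1$ (geometric improvement), but only of order $1-c/|\log\lambda|$ when $k\le d-2$ (logarithmic improvement). Summing the perturbations dyadically produces $\|u-Q_{A_{x_0}}\|_{L^\infty(B_r(x_0))}\le r^2\omega(r)$ with $\omega(r)=r^\alpha$ in the top stratum and $\omega(r)=|\log r|^{-\eps}$ in lower strata, and passes to a modulus of continuity for $x_0\mapsto A_{x_0}$ on each stratum.

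\textbf{Manifold covering and main obstacle.} Since the tangent $k$-plane to any manifold covering $\Sigma^k(u)$ at $x_0$ is forced to be $\ker A_{x_0}$, a Whitney-type extension argument, in the spirit of Caffarelli's original $C^1$ proof but with the sharper modulus, yields the claimed $C^{1,\log^\eps}$ (respectively $C^{1,\alpha}$ for $k=d-1$) covering. The principal technical difficulty is the improvement dichotomy for low strata $k\le d-2$: the contact set of the limiting quadratic is only a $k$-dimensional subspace, and extracting a quantitative contraction requires carefully balancing the admissibility constraints on $Q'$ against the degrees of freedom available in the correction $Q-Q'$; the cardinality of these degrees is precisely what forces the logarithmic rather than H\"older rate, and executing this balance within the viscosity-solution framework of \eqref{OP}, without the energy-based tools provided by monotonicity formulae, is the crux of the proof.
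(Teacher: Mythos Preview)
Your high-level architecture (iterative improvement of quadratic approximation, compactness, Whitney extension) matches the paper, but two of the three core technical pillars are either absent or incorrect.

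\textbf{Top stratum: the limit is a thin obstacle solution, not a linear equation.} When $k=d-1$ the limiting contact set is a hyperplane, and the renormalized $(u-Q)/\eta$ does \emph{not} converge to a function $w$ that merely solves a linear equation with H\"older coefficients on $\{Q_\infty>0\}$. The limit solves the \emph{thin obstacle problem} along $\{x_1=0\}$: it is superharmonic globally, harmonic off the hyperplane, and nonnegative on it. Such functions are generically not $C^2$ at the origin; they can have homogeneity $1$ or $3/2$ (Almgren frequency). Your ``interior $C^{2,\alpha}$ bounds for $w$'' are unavailable across the thin set, and you cannot extract a quadratic correction without first ruling out frequencies $1$ and $3/2$. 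The paper does this through new improvement-of-monotonicity and improvement-of-convexity lemmas (Section~3): one builds barriers and shows that a frequency-$1$ or frequency-$3/2$ limit would force $\{u=0\}$ to contain a cone of positive opening at the origin, contradicting the singular-point thinness $\delta_{\{u=0\}}(r)\to 0$. This step is the heart of the $C^{1,\alpha}$ result and is absent from your outline.

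\textbf{Lower strata: the contraction mechanism and polynomial convergence.} Your claimed factor $\mu_k\sim 1-c/|\log\lambda|$ is a fixed constant in $(0,1)$, so iterating it still gives geometric decay of $\eta$ (hence a H\"older rate), not $|\log r|^{-\eps}$. The paper's mechanism is genuinely nonlinear: when $\lambda_2(D^2p)\gg\eps$ one compares $u$ to the \emph{unconstrained} solution $h$ (with $F(D^2h)=1$, $h=u$ on $\partial B_1$) and obtains a dichotomy in which the slow branch reads $\eps'\le \eps-\eps^\mu$ for a universal $\mu>1$. This recursion yields $\eps_k\sim k^{-c}$, i.e.\ $|\log r|^{-c}$ at scale $r=\rho^k$. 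More seriously, under such slow decay the naive bound $|D^2p_{k+1}-D^2p_k|\le C\eps_k$ is \emph{not summable}, so convergence of the approximating polynomials (and hence uniqueness of the blow-up) does not follow from your sketch. The paper resolves this by tracking $D^2h_k(0)$ instead of $D^2p_k$ and proving the pointwise estimate $(u-h)(\tfrac12\rho e_1)\le C(\eps_k-\eps_{k+1})$ in the slow branch; Harnack then gives $|D^2h_{k+1}(0)-D^2h_k(0)|\le C(\eps_k-\eps_{k+1})$, a telescoping bound that is summable regardless of the rate. Your outline contains no analogue of this, and without it the iteration does not close.
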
  
Theorem \ref{MainResult} states that the singular set of the free boundary in the nonlinear obstacle problem setting enjoys similar regularity properties as in the linear case. The methods developed here rely on linearization techniques, and the hypothesis that $F \in C^1$ is essential in our analysis.

 Let us briefly recall the strategy when the operator is the Laplacian. For each point $x_0$ in the singular set, we study the resclaings $u_{x_0,r}(\cdot)=\frac{u(r\cdot+x_0)}{r^2}$ as $r\to 0$. \textit{Up to a subsequence}, they converge to a quadratic polynomial, called the blow-up profile at $x_0$. When the operator is the Laplacian,  this polynomial is \textit{unique} in the sense that it is independent of the subsequence $r\to 0.$ It models the behaviour of our solution `at the point $x_0$'. A uniform rate of convergence allows the comparison of  blow-up profiles at different points. This gives the desired regularity of the covering manifolds.
 
Up to now, however, even the proof for the uniqueness of the blow-up profile requires monotonicity formulae. Due to the unstable nature of singular points, it is not obvious that the solution  cannot behave like completely different polynomials at different scales. This can be ruled out by monotonicity formulae. Once the solution is close to a parabola at a certain scale, a monotone quantity shows that the solution stays close to the same parabola at all smaller scales, leading to uniqueness of the blow-up profile. 

Since no monotonicity formula is expected for our problem, we do not have access to the behavior of $u$ at all small scales. Instead, we proceed using an iterative scheme. Suppose the  solution is very close to a parabola in $B_1$, we need to show that for some $\rho<1$, it is even closer to a similar parabola in $B_\rho$. Iterating this argument gives a rate of convergence to the blow-up profile, which in particular gives its uniqueness. Such scheme has been applied to study regularity of solutions of elliptic equations \cite{Sa} as well as regular points along free boundaries \cite{D}. To our knowledge this is the first time it has been applied to singular points along free boundaries. 

To be precise, suppose that $0$ is a singular point along $\partial\PosS$, and that $u$ is very close to a parabola $p$   in $B_1$, in the sense that  $$|u-p|<\eps  \text{ in $B_1$}$$ for some small $\eps$.  Our goal is to show that in $B_\rho$, the solution $u$ can be better approximated.  It is natural to look at the normalized solution $$\hat{u}=\frac{1}{\eps}(u-p),$$ which solves an obstacle problem with $\hat{O}=-\frac{1}{\eps}p$ as the obstacle. Assume that $p$ takes the form $$p=\frac{1}{2}\sum_{1\le j\le d} a_jx_j^2$$ with the coefficients satisfying  \begin{equation}\label{OrderingOfCoefficients}a_1\ge a_2\ge\dots\ge a_d,\text{ and } a_k\gg\eps,\end{equation} then the contact set between $\hat{u}$ and the obstacle concentrates around the subspace $$\{x_1=x_2=\dots =x_{k}=0\}.$$ 

From here we need to separate two cases depending on the dimension of this subspace.

When $k=1$, this subspace is of codimension $1$. In the limit as $\eps\to 0$, $\hat{u}$ effectively solves \textit{the thin obstacle problem} with $0$ as the obstacle along $\{x_1=0\}$. Let $\bar{u}$ denote the solution to this problem. After developing new technical tools concerning the directional monotonicity and convexity of solutions, we can show that $\bar{u}$ is $C^2$ at the origin, and the second-order Taylor polynomial of $\bar{u}$ gives the approximation of $u$ in $B_\rho$ with an error of the order $(1-\beta)\eps\rho^2$.

When $k\ge 2$, in the limit as $\eps\to 0$, the effective obstacle lives on a subspace of codimension strictly larger than $1$. Here it is more natural to approximate $u$ with the solution to the unconstraint problem $$F(D^2h)=1 \text{ in $B_1$, and } h=u \text{ along $\partial B_1$}.$$ We show that $h$ `almost' solves the constrained problem, and its second order Taylor expansion gives the next approximation of $u$ in $B_\rho$ with an error of the order $(\eps-\eps^\mu)\rho^2$ for some $\mu>1$. For $\eps$ small, this improvement is much slower than $\eps\to(1-\beta)\eps$. Consequently, we need a much more delicate argument to keep track of the change in the polynomials at each step, essentially saying that if the improvement of error is small, then the change in the polynomials is even smaller. 

Combining these two cases together, we get a rate of convergence to the blow-up profile, which allows us to establish the main result Theorem \ref{MainResult}.

To our knowledge, this is the first structural result for singular points in the \OP with nonlinear operators. We hope that the ideas and techniques developed here can be applied to other types of \FB problems.

This paper is structured as follows. In the next section we provide some preliminary material and introduce some notations. In Section 3 we establish the main new observations of this paper, the improvement of  monotonicity and convexity of the solution. With these we prove two lemmata concerning the iterative  scheme. In Section 4 we deal with the case when $k=1$ as in \eqref{OrderingOfCoefficients}. In Section 5 we deal with the case when $k\ge 2.$ In the last section, we combine these to prove the main result.

%%%%%%%%%%%%%%%%%%%%%%%%%%%%%%%%%%%%%%%%%%%%%%%%%%%%%%%%%%%%%%%%%%%%%%%%%%%%%%%%%%%%%%%%%%%%%%%%%%%%%%%%%%%%%%%%%%%%%%%%%%%%%%%%%%%%%%%%%%%%%%%%%%%%%%%%%%%%
\section{Preliminaries and notations}

This section is divided into three subsections. In the first subsection we discuss some regularity properties of convex elliptic operators. The main reference for these is Caffarelli-Cabr\'e \cite{CC}. In the next subsection we include some known results on the obstacle problem, mostly from Lee \cite{L}. In the last subsection we recall an expansion of solutions to the thin obstacle problem. 

\subsection{Fully nonlinear convex elliptic operators}
Let $\mathcal{S}_d$ denote the space of $d$-by-$d$ symmetric matrices. Our  assumptions on the operator $F:\mathcal{S}_d\to\R$ are: 
\begin{equation}\label{FirstAssumption}F(0)=0; \text{ }F\text{ is convex};\end{equation} 
\begin{equation}\label{SecondAssumption}F \text{ is $C^{1,\alpha_F}$ for some $\alpha_F\in(0,1)$ with $C^{1,\alpha_F}$ seminorm } [F]_{C^{1,\alpha_F}}\le C_F;  \end{equation}
there is a constant $1\le\Lambda<+\infty$ such that \begin{equation}\label{Ellipticity}\frac{1}{\Lambda}\|P\|\le F(M+P)-F(M)\le\Lambda\|P\|\end{equation} for all $M,P\in\mathcal{S}_d$ and $P\ge 0.$

We call a constant \textit{universal} if it depends only on the dimension $d$, the elliptic constant $\Lambda$ and $C_F$, $\alpha_F$.

For a $C^2$ function $\varphi$, define the \textit{linearized operator} $L_\varphi:\mathcal{S}_d\to\R$ by \begin{equation*}\label{LinearizedOperator}L_\varphi(M)=\sum_{ij} F_{ij}(D^2\varphi)M_{ij},\end{equation*}where $F_{ij}$ denotes the derivative of $F$ in the $(i,j)$-entry, and $D^2\varphi$ is the Hessian of $\varphi$. One consequence of convexity is \begin{equation}\label{CompareWithLinearizedEquation}
L_v(w-v)\le F(D^2w)-F(D^2v)\le L_w(w-v).
\end{equation} 
As a result, we have the following comparison principle:
\begin{prop}\label{ComparisonPrinciple}
Let $u$ be a solution to \eqref{OP}. Suppose the functions $\Phi$ and $\Psi$ satisfy the following equations.
\begin{equation*}
\begin{cases}
F(D^2\Phi)\ge 1 &\text{ in $\Omega\cap\{\Phi>0\}$,}\\ \Phi\le u &\text{ on $\partial\Omega$.} 
\end{cases}
\end{equation*} 

\begin{equation*}
\begin{cases}
F(D^2\Psi)\le 1 &\text{ in $\Omega$,}\\\Psi\ge0 &\text{ in $\Omega$,}\\ \Psi\ge u &\text{ on $\partial\Omega$.} 
\end{cases}
\end{equation*} Then $\Phi\le u\le\Psi$ in $\Omega.$
\end{prop}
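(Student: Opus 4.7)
The plan is to establish each inequality separately by analyzing the (open) set on which it could fail and then invoking the maximum principle after linearizing $F$ via \eqref{CompareWithLinearizedEquation}.

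For the upper bound $u\le\Psi$, let $O_+=\{u>\Psi\}\cap\Omega$. Because $\Psi\ge 0$, we have $u>0$ throughout $O_+$, so the obstacle equation gives $F(D^2 u)=1$ there; combined with $F(D^2\Psi)\le 1$ this yields
\begin{equation*}
F(D^2 u)-F(D^2\Psi)\ \ge\ 0 \qquad\text{on } O_+.
\end{equation*}
Since the interior theory of Caffarelli--Cabr\'e makes $u$ classical $C^{2,\alpha}$ on $\{u>0\}\supset O_+$, the convexity inequality \eqref{CompareWithLinearizedEquation} applied at the base point $u$ gives
\begin{equation*}
L_u(u-\Psi)\ \ge\ F(D^2 u)-F(D^2\Psi)\ \ge\ 0 \qquad\text{on } O_+,
\end{equation*}
so $u-\Psi$ is a subsolution of a linear uniformly elliptic operator. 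On $\partial O_+$ we have $u=\Psi$ on the interior portion and $u\le\Psi$ on $\partial\Omega$, hence $u-\Psi\le 0$ on $\partial O_+$. The ABP/maximum principle then forces $u-\Psi\le 0$ on $O_+$, giving $O_+=\emptyset$.

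The lower bound $\Phi\le u$ is proved analogously on $O_-=\{\Phi>u\}\cap\Omega$. On $O_-$, $\Phi>u\ge 0$ forces $\Phi>0$ and hence $F(D^2\Phi)\ge 1\ge F(D^2 u)$, the second inequality holding everywhere in the viscosity sense since $u$ is characterized as the least nonnegative supersolution of $F(D^2\cdot)\le 1$. The same linearization argument, now at the base point $\Phi$ (or bypassing \eqref{CompareWithLinearizedEquation} and invoking the standard viscosity comparison for the uniformly elliptic equation $F(D^2\cdot)=1$ on $O_-$), together with $\Phi-u\le 0$ on $\partial O_-$, yields $O_-=\emptyset$.

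The main delicate point is to justify the linear comparison without assuming smoothness of the auxiliary functions. For the upper bound this is automatic because the argument takes place on $\{u>0\}$, where Caffarelli--Cabr\'e regularity makes $L_u$ a genuine linear elliptic operator with continuous coefficients. For the lower bound, $\Phi$ is only a viscosity subsolution, so either one regularizes $\Phi$ by inf-convolution before applying \eqref{CompareWithLinearizedEquation}, or one uses directly the comparison principle for viscosity sub-/supersolutions of $F(D^2\cdot)=1$, which for our convex $F$ is itself a consequence of \eqref{CompareWithLinearizedEquation}.
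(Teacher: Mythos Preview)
Your proof is correct and follows essentially the same strategy as the paper: linearize via \eqref{CompareWithLinearizedEquation} and apply the maximum principle. The only cosmetic difference is that the paper works on the positivity sets $\{\Phi>0\}$ and $\{u>0\}$ and then handles the complement trivially, whereas you work directly on the failure sets $\{\Phi>u\}$ and $\{u>\Psi\}$; since $\{\Phi>u\}\subset\{\Phi>0\}$ and $\{u>\Psi\}\subset\{u>0\}$, the arguments are interchangeable. Your closing paragraph on the regularity of $\Phi$ is more careful than the paper, which tacitly assumes $\Phi$ is $C^2$ (justified in practice since all barriers used later are explicit quadratics).
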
 

\begin{proof}
Define $U=\Omega\cap\{\Phi>0\}$. Then \eqref{CompareWithLinearizedEquation} implies that inside $U$, we have  $$L_{\Phi}(\Phi-u)\ge F(D^2\Phi)-F(D^2u)\ge 0.$$ Note that $\partial U\subset\partial\Omega\cup\{\Phi=0\}$, we have $\Phi-u\le 0$ on $\partial U$ since $u\ge 0$ in $\Omega$ and $\Phi\le u$ on $\partial\Omega.$ Maximum principle gives $$\Phi-u\le 0 \text{ in $U$.}$$ Again with $u\ge 0$, we have $\Phi-u\le 0$ in $\{\Phi\le 0\}=U^c$. Combining these we have $$\Phi\le u \text{ in $\Omega$}.$$ 

To see the comparison between $u$ and $\Psi$, we define $V=\Omega\cap\{u>0\}.$ Then  \eqref{CompareWithLinearizedEquation} implies that inside $V$, we have 
$$L_u(\Psi-u)\le F(D^2\Psi)-F(D^2u)\le 1-1=0.$$ With $\Psi\ge 0$ in $\Omega$ and $\Psi\ge u$ on $\partial\Omega$, we see that $\Psi-u\ge 0$ on $\partial V$. Maximum principle leads to $$\Psi-u\ge 0 \text{ in $V$.}$$In $V^c$, $u=0\le\Psi$, thus $$\Psi\ge u \text{ in $\Omega$.}$$
\end{proof}

One corner stone of the regularity theory of fully nonlinear elliptic operators is the Evans-Krylov estimate \cite{CC}:
\begin{thm}\label{EvansKrylov} 
Let $F:\mathcal{S}_d\to\R$ be a convex operator satisfying $F(0)=0$ and \eqref{Ellipticity}. If $v$ solves $$F(D^2v)=f \text{ in $B_1$},$$then there are universal constants $\alpha\in(0,1)$ and $0<C<+\infty$ such that $$\|v\|_{C^{2,\alpha}(B_{1/2})}\le C(\|v\|_{\mathcal{L}^\infty(B_1)}+\|f\|_{C^{\alpha}(B_1)}).$$
\end{thm}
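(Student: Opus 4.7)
\smallskip

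\noindent\textbf{Proof plan.}
The statement is the classical Evans--Krylov theorem, so my plan follows the scheme in Caffarelli--Cabr\'e. First I would perform the standard normalizations: by subtracting the harmonic-type quadratic $\frac12 x\cdot Mx$ with $F(M)=f(0)$ and using $C^\alpha$ perturbations, I reduce to the case where $v$ solves $F(D^2 v)=0$ in $B_1$ (the dependence on $\|f\|_{C^\alpha}$ is absorbed by standard interior perturbation arguments that treat $F(D^2 v)=f(x)$ as a frozen-coefficient problem with lower-order error $|f(x)-f(0)|\le \|f\|_{C^\alpha}|x|^\alpha$). From Krylov--Safonov applied to incremental quotients of $v$ one already obtains $C^{1,\alpha_0}$ estimates for $v$; the content of Evans--Krylov is the jump from $C^{1,\alpha_0}$ to $C^{2,\alpha}$.

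The heart of the proof is to show that for every unit vector $e\in\mathbb{S}^{d-1}$, the pure second derivative $w:=v_{ee}$ has oscillation that decays geometrically on dyadic balls. The convexity of $F$ is used twice. First, differentiating the equation formally twice in the direction $e$ and using $F$ convex yields
\[
L_v(w)\ \ge\ 0,
\]
so $w$ is a subsolution of the linearized operator, hence of the Pucci extremal operator $\mathcal{M}^+$. Second, and more subtly, convexity gives a pointwise upper bound for $w$ in terms of integral averages of $w$: for any two points $x,y$,
\[
F(D^2v(x))-F(D^2v(y))\ \le\ \mathrm{tr}\bigl(F'(D^2v(x))(D^2v(x)-D^2v(y))\bigr),
\]
which, after integrating against a suitable convex combination of points, produces the inequality
\[
\sup_{B_r} w\ \le\ \fint_{B_{2r}} w\,dx\ +\ (\text{small error from }f),
\]
up to a suitable rescaling. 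Combined with the weak Harnack inequality for the nonnegative supersolution $\sup_{B_{2r}} w - w\ge 0$, this yields an oscillation decay $\operatorname{osc}_{B_{r/2}} w\le\theta\,\operatorname{osc}_{B_r}w$ with universal $\theta<1$, which iterates to $w\in C^{\alpha}$ with a universal exponent.

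Finally I extend the estimate from pure second derivatives $v_{ee}$ to all entries of $D^2v$. This is a linear-algebra step: any symmetric matrix $M$ can be written as $\sum c_i\, e_i\otimes e_i$ for finitely many universal directions $e_i$ (polarization), so a Hölder bound on every $v_{ee}$ in a universal finite family controls $D^2v$ in $C^\alpha$. Combining with the initial reduction recovers
\[
\|v\|_{C^{2,\alpha}(B_{1/2})}\ \le\ C\bigl(\|v\|_{L^\infty(B_1)}+\|f\|_{C^\alpha(B_1)}\bigr).
\]
The step I expect to be the most delicate is the convexity-based reverse-mean-value estimate giving $\sup_{B_r} w \lesssim \fint_{B_{2r}} w$; all the other ingredients (Krylov--Safonov, weak Harnack, polarization, perturbation from $f\ne0$) are routine once this one-sided bound is in hand. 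Since the result is quoted from Caffarelli--Cabr\'e, I would simply invoke their statement rather than reprove it in full.
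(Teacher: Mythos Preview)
The paper does not prove this theorem at all; it is stated as a quotation of the classical Evans--Krylov estimate with a reference to \cite{CC}, and your final sentence (``I would simply invoke their statement rather than reprove it in full'') is exactly what the paper does. So on the level of what actually appears, your proposal and the paper coincide.

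One caution about your sketch, in case you ever do write it out: the sign in your key step is reversed. For \emph{convex} $F$ with $F(D^2v)=0$, differentiating twice gives
\[
F_{ij,kl}(D^2v)\,D_{kl}(D_ev)\,D_{ij}(D_ev)\;+\;F_{ij}(D^2v)\,D_{ij}(D_{ee}v)\;=\;0,
\]
and convexity makes the first term nonnegative, so $L_v(D_{ee}v)\le 0$: the pure second derivative $w=v_{ee}$ is a \emph{supersolution}, not a subsolution (this is also how the paper uses it, see \eqref{EquationForDerivatives}). In the Caffarelli--Cabr\'e argument the supersolution property is paired with the one-sided envelope inequality $F(D^2v(y))\ge F(D^2v(x))+\nabla F(D^2v(x))\cdot(D^2v(y)-D^2v(x))$, which after choosing finitely many supporting planes controls the oscillation of $D^2v$ from the other side; your ``reverse mean-value'' description is in the right spirit but the roles of sub/supersolution and the exact mechanism are slightly off. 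None of this affects the paper, since the theorem is only cited.
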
 

In particular, if $u$ solves \eqref{OP}, then in $\PosS$ we have enough regularity to differentiate the equation and use convexity of $F$ to get\begin{equation}\label{EquationForDerivatives}
L_u(D_eu)=0,\text{ } L_u(D_{ee}u)\le 0 \text{ in $\PosS.$}
\end{equation} Here $e\in\Sph$ is a unit vector. Here and in later parts of the paper, $D_e$ denotes the differentiation in the $e$-direction. $D_{ee}$ denotes the pure second order derivative in the $e$-direction. When differentiating along directions of a standard orthonormal basis of $\R^d$, we also write $D_i=D_{e_i}$ and $D_{ij}=D_{e_i}D_{e_j}$, where $e_i$ is the $i$th vector in the standard basis. 

A direct application of the previous theorem gives the following estimate:
\begin{prop}\label{EstimateForDifference}
Let $F$ satisfy the assumptions \eqref{FirstAssumption}, \eqref{SecondAssumption} and \eqref{Ellipticity}. 

If $v$ solves $$F(D^2v)=1 \text{ in $B_1$},$$ and $p$ is a quadratic polynomial with $F(D^2p)=1,$ then $$\|v-p\|_{C^{2,\alpha}(B_{1/2})}\le C\|v-p\|_{\mathcal{L}^\infty(B_1)}$$ for some universal $\alpha\in(0,1)$ and $0<C<+\infty.$

If $w$ also solves $$F(D^2w)=1 \text{ in $B_1$},$$ then for a universal constant $\alpha\in(0,1)$ we have $$\|v-w\|_{C^{2,\alpha}(B_{1/2})}\le C\|v-w\|_{\mathcal{L}^\infty(B_1)}$$ where $C$ further depends on $\|v\|_{\mathcal{L}^\infty(B_1)}$ and $\|w\|_{\mathcal{L}^\infty(B_1)}.$
\end{prop}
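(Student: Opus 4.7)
The plan is to prove the two estimates by reducing them to standard linear/nonlinear regularity theory, but using different linearization strategies.

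For the first part, I would exploit the fact that $F(D^2p)=1$ is constant in $x$ to set up a clean rescaling. Let $\eps_0 := \|v-p\|_{L^\infty(B_1)}$ (assume $\eps_0>0$; otherwise $v\equiv p$ and the estimate is trivial) and define the normalized function
\[
 w(x):=\eps_0^{-1}(v(x)-p(x)),\qquad \|w\|_{L^\infty(B_1)}=1.
\]
Since $D^2p$ is a constant matrix, $w$ satisfies the equation $\tilde F(D^2w)=0$ in $B_1$, where
\[
 \tilde F(M):=\eps_0^{-1}\bigl[F(D^2p+\eps_0 M)-F(D^2p)\bigr].
\]
I would then verify, directly from the definition, that $\tilde F$ inherits the hypotheses \eqref{FirstAssumption}--\eqref{Ellipticity} from $F$ with the \emph{same} ellipticity constant $\Lambda$ and with $C^{1,\alpha_F}$ seminorm bounded by $\eps_0^{\alpha_F}C_F\le C_F$ (convexity and ellipticity are preserved by affine reparametrizations of the argument; the $C^{1,\alpha_F}$ seminorm actually improves by a factor $\eps_0^{\alpha_F}$; and $\tilde F(0)=0$ holds by construction). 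Applying the Evans--Krylov estimate (Theorem \ref{EvansKrylov}) to $w$ with $\tilde F$ then gives $\|w\|_{C^{2,\alpha}(B_{1/2})}\le C$ for universal $\alpha$ and $C$. Scaling back yields $\|v-p\|_{C^{2,\alpha}(B_{1/2})}\le C\eps_0$, which is the desired bound with a universal constant.

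For the second part, there is no constant-Hessian reference polynomial to expand around, so I would linearize along the segment joining $D^2v(x)$ and $D^2w(x)$. Since $F(D^2v)-F(D^2w)=0$, the fundamental theorem of calculus gives
\[
 a^{ij}(x)\,D_{ij}(v-w)(x)=0,\qquad a^{ij}(x):=\int_0^1 F_{ij}\bigl(sD^2v(x)+(1-s)D^2w(x)\bigr)\,ds.
\]
The coefficients $a^{ij}$ are uniformly elliptic with the same constant $\Lambda$ as $F$. To apply classical Schauder theory, I need them to be Hölder continuous: this follows because $F_{ij}\in C^{\alpha_F}$ by \eqref{SecondAssumption}, while $D^2v$ and $D^2w$ are $C^{\alpha'}$ with norms controlled by Evans--Krylov in terms of $\|v\|_{L^\infty(B_1)}$ and $\|w\|_{L^\infty(B_1)}$. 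The composition and integration then give $a^{ij}\in C^{\beta}(B_{3/4})$ for some exponent $\beta>0$, with a norm depending on $\|v\|_{L^\infty(B_1)},\|w\|_{L^\infty(B_1)}$. The usual interior Schauder estimate for a linear elliptic equation with Hölder coefficients and zero right-hand side then yields $\|v-w\|_{C^{2,\alpha}(B_{1/2})}\le C\|v-w\|_{L^\infty(B_1)}$.

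The main obstacle, though essentially technical, is the first part: one must be careful that the rescaled operator $\tilde F$ genuinely satisfies all of \eqref{FirstAssumption}--\eqref{Ellipticity} with constants independent of $\eps_0$ and of $D^2p$, so that Evans--Krylov produces a universal constant rather than one depending on $\|v\|_{L^\infty(B_1)}$ or $\|p\|_{L^\infty(B_1)}$ (recall that $F(D^2p)=1$ alone does not bound $\|D^2p\|$, since $F$ is monotone only in positive directions). This is the reason I prefer the rescaling trick to a direct Schauder argument for part one, even though the linearization-along-a-segment technique used in part two would otherwise look symmetric.
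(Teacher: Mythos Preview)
Your proof is correct and follows essentially the same approach as the paper: shift the operator by $D^2p$ and apply Evans--Krylov for part one, and linearize along the segment between $D^2v$ and $D^2w$ followed by Schauder for part two. The only difference is that the paper skips your normalization by $\eps_0$ in part one, applying Theorem~\ref{EvansKrylov} directly to $G(M):=F(M+D^2p)-F(D^2p)$ and $v-p$ (since that estimate already has the correct linear dependence on $\|v-p\|_{L^\infty}$, and only convexity, $G(0)=0$, and ellipticity are required---so your care about the $C^{1,\alpha_F}$ seminorm is unnecessary).
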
 

\begin{proof}
For the first statement in the proposition, we directly apply the previous theorem to the operator $$G(M)=F(M+D^2p)-F(D^2p).$$ This satisfies all assumptions in Theorem \ref{EvansKrylov}. The difference $v-p$ solves $$G(D^2(v-p))=0 \text{ in $B_1$}.$$

For the second statement of the proposition, we first apply Theorem \ref{EvansKrylov} to $v$ and $w$, which gives $$\|v\|_{C^{2,\alpha}(B_{3/4})}\le C\|v\|_{\mathcal{L}^\infty(B_1)}$$ and $$\|w\|_{C^{2,\alpha}(B_{3/4})}\le C\|w\|_{\mathcal{L}^\infty(B_1)}.$$

In $B_1$ the difference $v-w$ solves the linear equation $$A_{ij}(x)D_{ij}(v-w)=0$$ with coefficients $$A_{ij}(x)=\int_{0}^1F_{ij}(tD^2v(x)+(1-t)D^2w(x))dt.$$By the previous estimate, this  is H\"older continuous. We apply the standard Schauder theory to get the desired estimate.
\end{proof} 

Next we give an estimate for solutions to linear equations with coefficients which are close to being constant in a large portion of the domain. This is relevant in our analysis since often the linearized operators considered are perturbations of the constant coefficient operator $L_p$. 

\begin{prop}\label{v-w}
Let $\Omega$ be a Lipschitz domain. Suppose $v$,$w$ are $C^{2}$ solutions to the uniformly elliptic linear equations
$$ a^{ij}(x)v_{ij}=0 \quad \mbox{and} \quad b^{ij} w_{ij}=0 \quad \mbox{in $\Omega$}, \quad v=w=\varphi \quad \mbox{on $\partial \Omega$,} $$
with $\varphi$ H\"older continuous, and with coefficients $a^{ij}(x)$ measurable, $b^{ij}$ constant.

 If
$$|a^{ij}(x)-b^{ij}| \le \eta \quad \mbox{in} \quad \Omega_\eta:=\{x \in \Omega| \quad dist(x,\partial \Omega) > \eta\},$$ 
then
$$\|v-w\|_{L^\infty(\Omega)} \le \omega(\eta), $$
where $\omega(\cdot)$ is a modulus of continuity, which depends on the ellipticity constants, the domain $\Omega$ and $\|\varphi\|_{C^\alpha}$.
\end{prop}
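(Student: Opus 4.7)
The plan is to prove the proposition by contradiction, using a compactness argument that combines uniform boundary regularity with stability of linear uniformly elliptic equations under $L^\infty$-convergence of coefficients.

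Suppose the statement fails. Then there exist sequences $\eta_k \downarrow 0$, measurable uniformly elliptic $a_k^{ij}(x)$, constant uniformly elliptic $b_k^{ij}$ with $|a_k^{ij}(x) - b_k^{ij}| \le \eta_k$ on $\Omega_{\eta_k}$, Hölder boundary data $\varphi_k$ with $\|\varphi_k\|_{C^\alpha(\partial\Omega)} \le 1$ (after a harmless normalization), and $C^2$ solutions $v_k, w_k$ of the respective Dirichlet problems such that $\|v_k - w_k\|_{L^\infty(\Omega)} \ge c_0 > 0$. By the maximum principle $\|v_k\|_{L^\infty}, \|w_k\|_{L^\infty} \le 1$. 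The Krylov--Safonov boundary Hölder estimate in the Lipschitz domain $\Omega$ with $C^\alpha$ boundary data yields a uniform bound
$$\|v_k\|_{C^\beta(\overline\Omega)} + \|w_k\|_{C^\beta(\overline\Omega)} \le C$$
for some $\beta \in (0,1)$ depending only on ellipticity, the Lipschitz character of $\partial\Omega$, and the $C^\alpha$ norm of $\varphi_k$. Arzelà--Ascoli then produces subsequences with $v_k \to v_\infty$, $w_k \to w_\infty$ uniformly on $\overline\Omega$, $\varphi_k \to \varphi_\infty$ uniformly on $\partial\Omega$, and $b_k^{ij} \to b_\infty^{ij}$ (still uniformly elliptic).

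On any compact set $K$ with $\overline K \subset \Omega$, one has $K \subset \Omega_{\eta_k}$ for large $k$ and hence $|a_k^{ij}(x) - b_\infty^{ij}| \le \eta_k + |b_k^{ij} - b_\infty^{ij}| \to 0$ uniformly on $K$. Stability of $C^2$ solutions of linear uniformly elliptic equations under $L^\infty$-convergence of coefficients (which follows from Caffarelli's interior $W^{2,p}$ estimates, valid once the coefficients are sufficiently close to constants, together with the resulting $C^{1,\alpha'}_{loc}$ compactness) then shows that $v_\infty$ and $w_\infty$ are both classical solutions of the constant-coefficient problem $b_\infty^{ij}(\cdot)_{ij} = 0$ in $\Omega$ with common boundary value $\varphi_\infty$. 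By the maximum principle $v_\infty \equiv w_\infty$, contradicting $\|v_k - w_k\|_{L^\infty} \ge c_0$.

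The hard part is the uniform equicontinuity \emph{up to $\partial\Omega$}: without a quantitative boundary modulus of continuity valid for all uniformly elliptic linear equations on the Lipschitz domain $\Omega$, one cannot ensure that $v_\infty$ and $w_\infty$ attain the same boundary data in the limit, and the contradiction collapses. This is precisely the reason the final modulus $\omega$ is forced to depend on $\Omega$ and on $\|\varphi\|_{C^\alpha}$, in addition to the ellipticity constants.
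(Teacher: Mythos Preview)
Your proof is correct and follows essentially the same compactness argument as the paper, which also sketches a proof by contradiction: uniform H\"older continuity up to the boundary (via the global Harnack/Krylov--Safonov estimate), extraction of convergent subsequences, identification of both limits as solutions of the same constant-coefficient equation with common boundary data, and uniqueness. Your version supplies more detail on the stability step and on the role of the boundary regularity, but the strategy is the same.
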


\begin{proof}
The proposition follows from the perturbative methods developed in [CC]. Here we only sketch a proof by compactness. 

The global version of Harnack inequality implies that $v$, $w$ are uniformly H\"older continuous in $\overline \Omega$. Now we consider a sequence of $\eta_k \to 0$, and the 
corresponding solutions $v_k$, $w_k$ (for equations with coefficients $a^{ij}_k(x)$, $b^{ij}_k$) . Then, up to subsequences, they must converge uniformly to a solution of the same constant coefficient equation. The limiting solutions must coincide since they have the same boundary data, and the conclusion follows.
\end{proof}

%%%%%%%%%%%%%%%%%%%%%%%%%%%%%%%%%%%%%%%%%%%%%%%%%%%%%%%%%%%%%%%%%%%%%%%%%%%%%%%%%%%%%%%%%%%%%%%%%%%%%%%%%%%%%%%%%%%%%%%%%%%%%%%%%%%%%%%%%%%%%%%%%%%%%%%%%%%%
\subsection{Known results for the obstacle problem}In this subsection we include some classical results concerning the obstacle problem \eqref{OP}. Most of the results here can be found in Lee \cite{L}.

We begin with the optimal regularity of the solution:

\begin{prop}\label{C11}
Let $u$ be a solution to \eqref{OP}. Then for a compact set $K\subset\Omega$, $$\|u\|_{C^{1,1}(K)}\le C$$ for some $C$ depending on universal constants, $K$, and $\|u\|_{\mathcal{L}^\infty(\Omega)}.$
\end{prop}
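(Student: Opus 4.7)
The result is the classical optimal regularity for the fully nonlinear obstacle problem, due to Lee \cite{L}. I outline the standard two-part strategy.

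\emph{Interior regularity in $\PosS$.} There $u$ solves $F(D^2 u)=1$ classically, so Proposition~\ref{EstimateForDifference} applied to $u-p$ with $p$ a fixed quadratic polynomial satisfying $F(D^2 p)=1$ gives local $C^{2,\alpha}$ estimates for $u$ at any point whose distance to $\partial\PosS$ is bounded below, with constants depending on $\|u\|_{\mathcal{L}^\infty(\Omega)}$ and that distance.

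\emph{Quadratic growth from the free boundary.} This is the heart of the argument: one needs constants $C$ and $r_0>0$ (depending on universal quantities, $\mathrm{dist}(K,\partial\Omega)$, and $\|u\|_{\mathcal{L}^\infty(\Omega)}$) such that for every $x_0\in\partial\PosS\cap K$ and every $r\le r_0$,
\begin{equation*}
\sup_{B_r(x_0)} u \le C r^2.
\end{equation*}
I would argue by contradiction and rescaling. If the bound fails, there exist sequences $x_k\in \partial\PosS\cap K$, $r_k\to 0$, and $M_k:=\sup_{B_{r_k}(x_k)} u$ with $M_k/r_k^2\to\infty$. Setting $v_k(y):=u(x_k+r_k y)/M_k$, one computes
\begin{equation*}
F_k(D^2 v_k) = \tfrac{r_k^2}{M_k}\,\chi_{\{v_k>0\}} \quad \text{in } B_1, \qquad v_k\ge 0,\quad v_k(0)=0,\quad \sup_{B_1} v_k=1,
\end{equation*}
where $F_k(M):=(r_k^2/M_k)F((M_k/r_k^2)M)$ is convex, uniformly elliptic with the same constant $\Lambda$, and satisfies $F_k(0)=0$. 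The Krylov-Safonov theory gives uniform $C^\alpha$ estimates on $v_k$, while uniform Lipschitz bounds on $F_k$ combined with $F_k(0)=0$ give local uniform bounds on the operators themselves. Passing to subsequences yields $v_k\to v_\infty$ locally uniformly and $F_k\to F_\infty$, where $F_\infty$ is again convex, uniformly elliptic, with $F_\infty(0)=0$. By stability of viscosity solutions, $v_\infty$ satisfies $F_\infty(D^2 v_\infty)\le 0$ in $B_1$ with $v_\infty\ge 0$, $v_\infty(0)=0$, and $\sup_{\overline{B_1}} v_\infty=1$. The strong maximum principle for uniformly elliptic operators applied to $-v_\infty$ then forces $v_\infty\equiv 0$ in $B_1$, contradicting $\sup v_\infty=1$.

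\emph{Conclusion.} Quadratic growth combined with the interior $C^{2,\alpha}$ estimates and a standard rescaling at each $x\in K\cap\PosS$ to a ball of radius $\mathrm{dist}(x,\{u=0\})$ yields a uniform bound $|D^2 u|\le C$ in $K$. The main technical obstacle is the quadratic growth step; the subtleties there are the stability of viscosity solutions under simultaneous convergence of the operators $F_k$ and the strong maximum principle for a general uniformly elliptic limit operator, both of which are standard tools from the theory of fully nonlinear elliptic equations.
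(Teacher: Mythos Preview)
The paper does not prove this proposition; it is quoted as a known result from Lee~\cite{L}, so there is no proof in the paper to compare against. Your outline follows the standard two-step strategy and the interior part is fine. The quadratic-growth step, however, has a genuine gap.

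With your normalization $\sup_{B_1}v_k=1$, the Krylov--Safonov estimates you invoke are \emph{interior} estimates, so you only obtain $v_k\to v_\infty$ locally uniformly in $B_1$. Nothing prevents the maximum of $v_k$ from being attained at points $y_k$ drifting to $\partial B_1$, in which case the limit $v_\infty$ may well be identically $0$ --- precisely the conclusion you are trying to \emph{derive}, not avoid. So the line ``$\sup_{\overline{B_1}}v_\infty=1$'' is unjustified, and the contradiction evaporates.

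The standard repair is a doubling step before rescaling. For instance, if quadratic growth fails one can select, for each $k$, a scale $r_k$ at which the quantity $r\mapsto r^{-2}\sup_{B_r(x_k)}u$ is (nearly) maximized among $r\in(0,r_0]$; this forces $\sup_{B_{2r_k}(x_k)}u\le C\,M_k$ with $C$ fixed. Then the rescaled $v_k$ are uniformly bounded on $B_2$, interior $C^\alpha$ estimates on $B_2$ give uniform convergence on $\overline{B_1}$, and $\sup_{\overline{B_1}}v_\infty=1$ is now legitimate. With that fix in place the strong-minimum-principle argument you sketch (applied to the nonnegative supersolution $v_\infty$ of the Pucci operator $\mathcal{M}^-$, which touches $0$ at the origin) goes through.
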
 

A direct consequence  is that in the contact set $\{u=0\}$,  we have\begin{equation}\label{Contact}
\nabla u=0 \text{ and } D^2u\ge 0 \text{ in the viscosity sense.}
\end{equation}

We have the following almost convexity estimate
\begin{prop}\label{UniformConvexity}
Let $u$ be a solution to \eqref{OP} in $\Omega=B_1$ with $u(0)=0.$ Then for some universal constants $\delta_0>0$ and $C$, $$D^2u(x)\ge -C|\log|x||^{-\delta_0} \text{ in $B_{1/2}.$}$$\end{prop}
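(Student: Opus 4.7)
The plan is to bound the negative part of $D_{ee}u$ in every direction $e\in\Sph$ by a universal modulus of continuity, then take the infimum over $e$ to obtain the stated Hessian lower bound. Fix $e\in\Sph$ and set $w_e:=(D_{ee}u)^-$. Differentiating the equation in $\{u>0\}$ and using convexity of $F$ yields $L_u(D_{ee}u)\le 0$, as in \eqref{EquationForDerivatives}; since the linear operator $L_u$ has no zeroth-order term, its negative part $w_e$ is a nonnegative subsolution of $L_u$ in $\{u>0\}$. On the contact set, the viscosity inequality $D^2u\ge 0$ from \eqref{Contact} forces $w_e\equiv 0$, so globally $w_e$ is a nonnegative subsolution of a uniformly elliptic linear equation in $B_1$ vanishing on $\{u=0\}$, and in particular vanishing at $0$.

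A qualitative decay $\sup_{B_r}w_e\to 0$ as $r\to 0$ is immediate from compactness: the $C^{1,1}$ bound (Proposition \ref{C11}) makes the rescalings $u_r(x):=u(rx)/r^2$ precompact in $C^{1,\gamma}_{\mathrm{loc}}$, and every subsequential limit $u_0$ is a global solution with $u_0(0)=0$ and at most quadratic growth; by the classification of blow-ups in Lee \cite{L}, $u_0$ is either a half-space solution or a nonnegative quadratic polynomial $\tfrac{1}{2}\,x\cdot Ax$ with $A\ge 0$, and in either case $D^2u_0\ge 0$, so $(D_{ee}u_{r})^-\to 0$ uniformly on $B_{1/2}$.

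To upgrade this to the quantitative logarithmic rate, I would iterate an improvement-of-supremum estimate on $M(r):=\sup_{B_r}w_e$, of the form $M(\rho r)\le (1-cM(r)^{1/\delta_0})M(r)$ for some universal $\rho\in(0,1)$ and $c,\delta_0>0$. The step is proved by compactness and contradiction: along a hypothetical failure sequence $r_j\to 0$, normalize $\tilde w_j(x):=w_e(r_jx)/M(r_j)$, pass the coefficients of the rescaled linearized operator to a constant-coefficient limit via Proposition \ref{v-w}, and extract a nonnegative subsolution $\tilde w_\infty$ of a constant-coefficient uniformly elliptic equation in $B_1$ which is bounded by $1$, nearly attains $1$ in $\overline{B_\rho}$, and vanishes on the blow-up of the contact set. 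Quantitative nondegeneracy of $u$ at $0$ forces this blow-up set to retain positive Lebesgue density at $0$, up to a deterioration that is polynomial in $M(r_j)$, and the strong maximum principle then excludes such a $\tilde w_\infty$. Summing the resulting improvements across the dyadic scales $r=2^{-k}$ produces the bound $M(r)\le C|\log r|^{-\delta_0}$.

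The principal obstacle is quantifying how the contact-set density degenerates through the normalization by $M(r_j)$: at singular free-boundary points the pointwise density of $\{u=0\}$ can be zero, so the proof must instead exploit the almost-convexity of the blow-up profile (rather than raw density), and the rate at which this almost-convexity can collapse as $M(r_j)\to 0$ is precisely what determines the universal exponent $\delta_0$ in the final statement. This is also the place where the absence of a monotonicity formula is most visible, since in the linear case the Alt--Caffarelli--Friedman formula directly controls $w_e$ without requiring such a density tracking.
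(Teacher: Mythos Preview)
The paper does not prove Proposition \ref{UniformConvexity}; it is quoted from Lee \cite{L} as a known result. So there is no ``paper's proof'' to match, and your proposal must stand on its own.

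Your setup is fine: $w_e=(D_{ee}u)^-$ is a nonnegative subsolution of $L_u$ in $\{u>0\}$, vanishes on $\{u=0\}$, and the qualitative decay via blow-up classification is correct. The gap is exactly where you flag it: the compactness step for the quantitative iteration cannot close. After normalizing $\tilde w_j=w_e(r_j\,\cdot)/M(r_j)$, the only information you retain is that $\tilde w_j$ vanishes on the rescaled contact set. At a singular point in $\Sigma^k(u)$ with $k\le d-2$, this set Hausdorff-converges to a subspace of codimension at least $2$, which has zero capacity for a second-order elliptic operator; a bounded nonnegative subsolution vanishing on such a set need not decay at all in $B_\rho$, so no contradiction arises. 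Your proposed fix, ``exploit the almost-convexity of the blow-up profile,'' is circular: that almost-convexity is precisely the content of the proposition you are trying to prove.

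The approach that does work (and that the paper itself uses repeatedly, see Lemma \ref{Convexity} and Step 4 of the proof of Lemma \ref{QuadraticApproxTopStratum}, and Steps 5--6 of Lemma \ref{QuadraticApproxLowerStratum}) has the opposite orientation. One works with the nonnegative \emph{supersolution} $v:=D_{ee}u+m(r)$, where $m(r)=-\inf_{B_r}D_{ee}u$, and finds a fixed-size ball contained in $\{u>0\}\cap B_{r/2}$ (available by nondegeneracy) in which $D^2u$ is $C\,m(r)$--close to a nonnegative constant matrix, so that $v$ is bounded below by a universal fraction of $m(r)$ there. Comparison of $v$ with a Pucci barrier (exactly as in Step 4 of Lemma \ref{QuadraticApproxTopStratum}) then propagates this lower bound and yields $m(\rho r)\le m(r)(1-c\,m(r)^N)$ for universal $c,N$; iterating gives the $|\log|x||^{-\delta_0}$ rate. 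The point is that the ``good set'' here is a ball in the positivity region, which is always available with uniform size, whereas your argument relies on the contact set, which can be arbitrarily thin.
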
 

The \FB decomposes into the regular part and the singular part $$\partial\PosS=Reg(u)\cup\Singu,$$ with $Reg(u)$ given locally by a 
$C^{1,\alpha}$ surface which separates the $0$ set from the positivity set. 

Define the \textit{thickness} function of a set $E$, $\delta_E(\cdot)$, as $$\delta_E(r)=\frac{MD(E\cap B_r)}{r},$$where $MD(E\cap B_r)$ is the infimum of distances between two pairs of parallel hyperplanes such that $E\cap B_r$ is contained in the strip between them. 

Geometrically the singular set $\Singu$ is characterized by the vanishing thickness of the zero set:
\begin{prop}\label{VanishingThickness}
Let $u$ be a solution to \eqref{OP} in $B_1$ with $0\in\Singu.$ There is a universal modulus of continuity $\sigma_1$ such that $$\delta_{\{u=0\}}(r)\le\sigma_1(r).$$
\end{prop}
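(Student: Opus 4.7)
The plan is to argue by contradiction via a blow-up and compactness scheme. Suppose the conclusion fails: then there exist $\eps_0 > 0$, a sequence of solutions $u_n$ to \eqref{OP} in $B_1$ (normalized so $\|u_n\|_{\mathcal{L}^\infty(B_1)} \le 1$) with $0 \in \Sigma(u_n)$, and radii $r_n \downarrow 0$ with $\delta_{\{u_n = 0\}}(r_n) \ge \eps_0$. Since the failure of the universal modulus can only arise from such $r_n \to 0$, I may take $r_n$ to be the \emph{largest} radius in $(0,1]$ with $\delta_{\{u_n = 0\}}(r_n) \ge \eps_0$, so that additionally $\delta_{\{u_n=0\}}(r) < \eps_0$ for $r \in (r_n, 1]$. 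Rescale $v_n(x) := u_n(r_n x)/r_n^2$, which solves \eqref{OP} in $B_{1/r_n}$ with $\delta_{\{v_n=0\}}(1) \ge \eps_0$ and $\delta_{\{v_n=0\}}(R) < \eps_0$ for $1 < R \le 1/r_n$. Proposition \ref{C11} combined with $v_n(0) = 0$, $\nabla v_n(0) = 0$ (from \eqref{Contact}) gives $|v_n(x)| \le C|x|^2$ locally, and Proposition \ref{UniformConvexity} yields $D^2 v_n(x) \ge -C|\log(r_n|x|)|^{-\delta_0}$, which tends to $0$ uniformly on compact sets as $n\to\infty$. A subsequential limit $v_\infty \in C^{1,\gamma}_{\mathrm{loc}}(\R^d)$ is then a nonnegative, \emph{convex} global solution of $F(D^2 v_\infty) = \chi_{\{v_\infty > 0\}}$ with quadratic growth and $v_\infty(0) = 0$.

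Next I would transfer the thickness bound to the limit: for each $\nu \in \Sph$, the condition $\delta_{\{v_n=0\}}(1) \ge \eps_0$ produces witnesses $x_n^\pm(\nu) \in \{v_n = 0\} \cap \overline{B_1}$ with $(x_n^+ - x_n^-) \cdot \nu \ge \eps_0$, whose subsequential limits $x^\pm \in \{v_\infty = 0\} \cap \overline{B_1}$ (by uniform convergence) preserve the bound, giving $\delta_{\{v_\infty=0\}}(1) \ge \eps_0$. I then invoke the classification of convex global solutions of \eqref{OP}, adapted from Caffarelli to the fully nonlinear setting by Lee \cite{L}: $v_\infty$ must be either (a) identically zero, (b) a half-space solution $\tfrac{1}{2}(x \cdot e)_+^2$ for some $e \in \Sph$, or (c) a quadratic polynomial $\tfrac{1}{2} x \cdot A x$ with $A \ge 0$ and $F(A) = 1$. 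Case (c) is immediately contradicted: $F(A) = 1$ and $F(0) = 0$ force $A \ne 0$, so $\{v_\infty = 0\} = \ker A$ lies in a proper hyperplane and has thickness zero.

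Cases (a) and (b) are excluded using the maximality of $r_n$ together with the standard non-degeneracy of OP solutions (if $v(x_0) > 0$ then $\sup_{B_r(x_0)} v \ge c r^2$ for a universal $c > 0$). Uniform convergence $v_n \to v_\infty$ combined with non-degeneracy forces $v_n \equiv 0$ on every compact subset of the interior of $\{v_\infty = 0\}$ for $n$ large. In case (b) this implies $\{v_n = 0\} \cap B_R$ asymptotically contains $\{x \cdot e \le -\eta\} \cap B_R$ for each $\eta > 0$, so $\delta_{\{v_n=0\}}(R) \to 1$; in case (a), $\{v_n = 0\}$ fills $B_R$ and $\delta_{\{v_n=0\}}(R) \to 2$. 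Fixing any $R > 1$ (and WLOG $\eps_0 < 1$), both contradict the upper bound $\delta_{\{v_n=0\}}(R) < \eps_0$ inherited from the maximality of $r_n$. The principal obstacle is the trichotomy classification of convex global solutions in the nonlinear setting: the maximality of $r_n$ and the convexity of $v_\infty$ (coming from Proposition \ref{UniformConvexity} after blow-up) are the key ingredients that replace Caffarelli's monotonicity-formula-based rigidity; once the classification is in hand, the remainder is a routine compactness and non-degeneracy argument.
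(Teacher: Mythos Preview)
The paper does not give its own proof of this proposition; it is quoted from Lee \cite{L} as a known result in the preliminaries. Your blow-up-and-classify strategy is the standard route and the main ingredients you isolate (uniform $C^{1,1}$ bounds, the almost-convexity of Proposition~\ref{UniformConvexity} forcing convexity of the blow-up, and the trichotomy for convex global solutions) are exactly the right ones.

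There is, however, a gap in the selection of $r_n$. You assert that $r_n$ may be taken to be the \emph{largest} radius in $(0,1]$ with $\delta_{\{u_n=0\}}(r_n)\ge\eps_0$ while still having $r_n\to0$. This is not justified: replacing the original small radius by $\sup\{r\le1:\delta_{\{u_n=0\}}(r)\ge\eps_0\}$ may well produce a sequence bounded away from $0$, in which case the rescalings $v_n$ are only defined on a fixed ball and the global classification is unavailable. Nothing proved up to this point forces that supremum to decay, and any attempt to pass to a limit at the fixed scale runs into the need to know that $0$ remains singular in the limit---which is precisely the stability statement you are trying to avoid.

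The repair is to argue differently in cases (a) and (b), dropping maximality entirely. Case (a) needs nothing extra: non-degeneracy at the free-boundary point $0$ gives $\sup_{B_1}v_n\ge c>0$ uniformly, which already contradicts $v_n\to 0$. For case (b), the clean argument---and the one implicit in Lee's development---is to invoke stability of regular points: if $v_n$ is uniformly close in $B_2$ to the half-space solution $\tfrac12(x\cdot e)_+^2$, then the ``flat implies $C^{1,\alpha}$'' theorem of \cite{L} forces $\partial\{v_n>0\}$ to be a $C^{1,\alpha}$ graph near the origin, hence $0\in Reg(v_n)$, contradicting $0\in\Sigma(v_n)$. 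This is consistent with the paper's definition $\Sigma(u)=\partial\{u>0\}\setminus Reg(u)$ and makes the maximality device unnecessary.
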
 In particular, if $0\in\Singu$, the zero set $\{u=0\}$ cannot contain a nontrivial cone with vertex at $0$.

Another characterization of the singular set  is that at points in $\Singu$ the solution  is approximated by quadratic polynomials. 

For this, we define the following class of polynomial solutions to the obstacle problem.  We also define the class of \textit{ convex} polynomials that do not necessarily satisfy the non-negative constraint.

\begin{defi}\label{QuadraticSolution}The class of \textit{quadratic solutions} is defined as $$\Qua=\{p:p(x)=\frac{1}{2}x\cdot Ax, A\ge 0, F(A)=1\}.$$

The class of \textit{unconstraint convex quadratic solutions} is defined as $$\UQua=\{p:p(x)=\frac{1}{2}x\cdot Ax+b\cdot x,A\ge 0, F(A)=1\}.$$
\end{defi} Here and in later parts of the paper, $x\cdot y$ denotes the standard inner product between two vectors $x$ and $y$.

Note that for a polynomial $p\in\UQua$, $D^2p\ge 0$. Ellipticity  \eqref{Ellipticity} then gives $$D^2p\le C \, I,$$ for some universal $C$.

For points in $\Singu$, we have the following uniform approximation by quadratic solutions:

\begin{prop}\label{UniformApproximation}
Let $u$ be a solution to \eqref{OP} in $B_1$ with $0\in\Singu$. There is a universal modulus of continuity $\sigma_2$ such that for each $r\in(0,1/2)$, there is $p^r\in\Qua$ satisfying $$\|u-p^r\|_{\mathcal{L}^\infty(B_r)}\le\sigma_2(r)r^2.$$
\end{prop}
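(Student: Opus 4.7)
My plan is to argue by compactness and classification of blow-up limits. Suppose for contradiction that the proposition fails: then there exist $\eps_0>0$ and a sequence $r_n\downarrow 0$ such that
\[\inf_{p\in\Qua}\|u-p\|_{L^\infty(B_{r_n})}>\eps_0 r_n^2\qquad\text{for every }n.\]
Rescaling to $u_n(x):=u(r_n x)/r_n^2$, this reads $\inf_{p\in\Qua}\|u_n-p\|_{L^\infty(B_1)}>\eps_0$. The goal is to extract a subsequence along which $u_n\to u_0\in\Qua$ locally uniformly on $\R^d$, producing the contradiction.

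First (compactness and limit equation), Proposition \ref{C11} together with $u(0)=0$ and $\nabla u(0)=0$ (the latter because $u\ge 0$ attains its minimum at the origin) yields $|u(y)|\le C|y|^2$ near $0$, so $|u_n(x)|\le C|x|^2$ on $B_{1/(2r_n)}$, a ball exhausting $\R^d$. The family $\{u_n\}$ is uniformly $C^{1,1}$ on compact sets, so after passing to a subsequence $u_n\to u_0$ in $C^{1,\alpha}_{loc}(\R^d)$ with $|u_0(x)|\le C|x|^2$. The obstacle equation scales invariantly under this rescaling, so by stability of viscosity solutions $u_0$ is a nonnegative global solution of $F(D^2u_0)=\chi_{\{u_0>0\}}$ with $u_0(0)=|\nabla u_0(0)|=0$.

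Second (the limit is a global one-phase polynomial solution), scaling Proposition \ref{UniformConvexity} gives $D^2 u_n(x)=D^2 u(r_n x)\ge -C|\log|r_n x||^{-\delta_0}\to 0$ for each $x\ne 0$, so $u_0$ is convex. Scaling Proposition \ref{VanishingThickness} gives $\delta_{\{u_n=0\}}(R)=\delta_{\{u=0\}}(r_n R)\le\sigma_1(r_n R)\to 0$ for every fixed $R$, and since $u_0$ is convex the set $\{u_0=0\}$ is convex. A convex set that is contained in arbitrarily thin strips in every ball must lie in a single hyperplane $H$. Hence $F(D^2u_0)=1$ in the open dense set $\{u_0>0\}$, and since $u_0$ is $C^{1,1}_{loc}$, a standard viscosity argument extends this across the negligible set $H$, giving $F(D^2u_0)=1$ on all of $\R^d$.

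Finally (Liouville), for any $R>0$ the rescaled $v_R(x):=u_0(Rx)/R^2$ solves $F(D^2v_R)=1$ in $B_1$ with $\|v_R\|_{L^\infty(B_1)}\le C$. Theorem \ref{EvansKrylov} then gives $[v_R]_{C^{2,\alpha}(B_{1/2})}\le C$, which unwinds to $[D^2u_0]_{C^\alpha(B_{R/2})}\le CR^{-\alpha}$; letting $R\to\infty$ forces $D^2u_0$ to be constant, so $u_0$ is a quadratic polynomial. The conditions $u_0(0)=|\nabla u_0(0)|=0$, $u_0\ge 0$, and $F(D^2u_0)=1$ then place $u_0\in\Qua$, contradicting the lower bound above. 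The main obstacle I anticipate is in the second step, specifically upgrading ``vanishing thickness plus convexity'' to a hyperplane contact set and then verifying that $F(D^2u_0)=1$ actually holds across that lower-dimensional set rather than merely on $\{u_0>0\}$; the other steps are routine compactness and Evans--Krylov arguments.
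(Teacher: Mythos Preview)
The paper does not give its own proof of this proposition; it is listed among the ``known results for the obstacle problem'' taken from Lee's thesis \cite{L}. Your compactness/blow-up strategy is exactly the standard route to such statements, and the Liouville step via Evans--Krylov is correct. Two points, however, need to be addressed.

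First, universality. As written, your contradiction argument fixes a single solution $u$ and produces a sequence $r_n\downarrow 0$; this yields a modulus depending on $u$, not a universal one. To get the stated result you must also allow the solution and the operator to vary along the sequence: assume there exist $\eps_0>0$, radii $r_n\to 0$, operators $F_n$ satisfying \eqref{FirstAssumption}--\eqref{Ellipticity} with the same constants, and solutions $u^{(n)}$ of the corresponding obstacle problems with $0\in\Sigma(u^{(n)})$ and $\inf_{p}\|u^{(n)}-p\|_{L^\infty(B_{r_n})}>\eps_0 r_n^2$. The uniform $C^{1,\alpha_F}$ bound on the $F_n$ and the uniform $C^{1,1}$ bound on the rescalings allow you to extract a limit operator and a limit solution exactly as before; the rest of the argument is unchanged.

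Second---and you correctly anticipate this---the inference from $\delta_{\{u_n=0\}}(R)\to 0$ to ``$\{u_0=0\}$ lies in a hyperplane'' has a gap. Vanishing thickness constrains $\{u_n=0\}$, not $\{u_0=0\}$, and zero sets are only upper-semicontinuous under uniform convergence: in general $\{u_0=0\}$ could be strictly larger than any limit of $\{u_n=0\}$. The missing ingredient is \emph{nondegeneracy}: for every $z\in\overline{\{u_n>0\}}$ one has $\sup_{B_r(z)}u_n\ge c\,r^2$ with a universal $c>0$. This forces every point of $\{u_0=0\}$ to be a limit of points in $\{u_n=0\}$ (otherwise nondegeneracy at a nearby point contradicts $u_0=0$ on a ball), so $\{u_0=0\}\cap B_R$ lies in the Hausdorff limit of the thin sets $\{u_n=0\}\cap B_R$, hence in a hyperplane. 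Equivalently, nondegeneracy already rules out $\{u_0=0\}$ having interior, which together with convexity gives measure zero; then $u_0\in W^{2,\infty}$ with $F(D^2u_0)=1$ a.e., and the standard fact that $C^{1,1}$ strong solutions are viscosity solutions gives $F(D^2u_0)=1$ everywhere. Once nondegeneracy is invoked explicitly, your remaining steps go through.
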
 

Combining Proposition \ref{UniformConvexity} and Proposition \ref{UniformApproximation}, we know that after some rescaling, our solution is in the following class:

\begin{defi}\label{ApproxClass}
Given $\eps,r\in(0,1)$ and $p\in\UQua$, we say that $u$ is \textit{$\eps$-approximated by the polynomial $p$ in $B_r$}, and use the notation $$u\in\mathcal{S}(p,\eps,r)$$
  if $$u \text{ solves \eqref{OP} in $B_r$,}$$ $$|u-p|\le\eps r^2 \text{ in $B_r$},$$ and 
  \begin{equation}\label{convexity}
  D^2u\ge-c_0\eps \,  I\text{ in $B_r,$}
  \end{equation} where $c_0=\frac{1}{16\Lambda^2}$.

\end{defi} 

The universal bound $0\le D^2p\le C \, I$ for $p\in\UQua$ immediately gives a universal bound on the size of $u$ whenever $u\in\mathcal{S}(p,\eps,r):$ \begin{equation}\label{UniversalBound}
0\le u\le C \text{ in $B_r$}
\end{equation}  where $C$ is universal.

%%%%%%%%%%%%%%%%%%%%%%%%%%%%%%%%%%%%%%%%%%%%%%%%%%%%%%%%%%%%%%%%%%%%%%%%%%%%%%%%%%%%%%%%%%%%%%%%%%%%%%%%%%%%%%%%%%%%%%%%%%%%%%%%%%%%%%%%%%%%%%%%%%%%%%%%%%%%
\subsection{The thin obstacle problem}
In this subsection we discuss solutions to the thin obstacle problem. In certain cases, our solution converges to them after normalization. Readers interested in the thin obstacle problem may consult Athanasopoulos-Caffarelli-Salsa \cite{ACS} or Petrosyan-Shahgholian-Uraltseva \cite{PSU}. In its most basic form, the thin obstacle problem is the following system:
\begin{equation}\label{ThinObstacle}
\begin{cases}
\Delta v\le 0 &\text{ in $B_1$,}\\ \Delta v=0 &\text{ in $B_1\cap(\{v>0\}\cup\{x_1\neq 0\})$,}\\ v\ge 0 &\text{ along $\{x_1=0\}$}.
\end{cases}
\end{equation} 
Here $x_1$ denotes the first coordinate function of $\R^d$.

For solutions to this problem, we have the following effective expansion according to frequencies at $0$:

\begin{prop}\label{ExpansionForThinObstacle}
Let $v$ be a non-trivial solution to \eqref{ThinObstacle} with $v(0)=0$. 

Then one of the following three possibilities happens for $v$:

\begin{enumerate}
\item{For some $a_\pm\in\R$ not both $0$, $$v(x)=a_+x_1^++a_-x_1^-+o(|x|)$$ as $x\to0;$}
\item{For some $r>0$ and $e\in\Sph\cap\{x_1=0\}$, $$D_ev>0 \text{ in $B_r\cap\{x_1\neq0\}$};$$}
\item{For some $A\in\mathcal{S}_d$ with  $e\cdot Ae\ge 0$ $\forall e\in\Sph\cap\{x_1=0\}$ and $trace(A)=0$, $$v(x)=\frac{1}{2}x\cdot Ax+o(|x|^2)$$ as $x\to 0$.}
\end{enumerate}
\end{prop}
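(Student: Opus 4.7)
The argument rests on the Almgren frequency function
\[
N(r,v) := \frac{r \int_{B_r} |\nabla v|^2}{\int_{\partial B_r} v^2},
\]
which is non-decreasing in $r$ for solutions of the thin obstacle problem by the standard Almgren--Athanasopoulos--Caffarelli--Salsa monotonicity formula for the Signorini setting. Since $v$ is non-trivial and vanishes at $0$, the limit $N_0 := \lim_{r \to 0^+} N(r,v)$ exists and is quantized, taking values in $\{1, 3/2\} \cup [2, +\infty)$. The rescalings $v_r(x) := v(rx)/\rho(r)$, with $\rho(r)$ chosen so that $\|v_r\|_{L^2(\partial B_1)} = 1$, are uniformly bounded and converge along subsequences to a nonzero $N_0$-homogeneous global solution $v_0$. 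The three cases of the proposition correspond exactly to $N_0 = 1$, $N_0 = 3/2$, and $N_0 \geq 2$.

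For $N_0 = 1$, any $1$-homogeneous global solution with $v_0(0) = 0$ must take the form $a_+ x_1^+ + a_- x_1^-$: off $\{x_1 = 0\}$, homogeneity together with harmonicity forces $v_0$ to be linear in each half-space, and the constraint $v_0 \geq 0$ on $\{x_1 = 0\}$ kills any tangential linear component. Uniqueness of the blow-up, and hence the $o(|x|)$ remainder, then follows from a Monneau-type comparison against this finite-dimensional family. For $N_0 = 3/2$, the ACS classification identifies $v_0$, after a rotation inside $\{x_1 = 0\}$, as a positive multiple of the regular profile $\operatorname{Re}((x_n + i|x_1|)^{3/2})$, which satisfies $D_{e_n} v_0 > 0$ strictly on $\{x_1 \neq 0\}$. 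The convergence $v_r \to v_0$ is $C^{1,\alpha}_{loc}$ away from $\{x_1 = 0\}$ by interior harmonic estimates, so the strict monotonicity is inherited by $v_r$ for $r$ sufficiently small and transfers back to $v$ on some ball $B_r$, giving case (2).

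For $N_0 \geq 2$, any $N_0$-homogeneous profile $v_0$ is harmonic off $\{x_1 = 0\}$, so at $N_0 = 2$ it is a quadratic polynomial $\tfrac{1}{2} x \cdot A x$ with $\operatorname{tr}(A) = 0$, and the constraint on $\{x_1 = 0\}$ translates to $e \cdot A e \geq 0$ for $e \in \mathbb{S}^{d-1} \cap \{x_1 = 0\}$; when $N_0 > 2$, $v(x) = o(|x|^2)$ and case (3) holds with $A = 0$. The main obstacle is uniqueness of the blow-up at $N_0 = 2$: Almgren's formula alone forces only homogeneity of subsequential limits, not uniqueness within the two-homogeneous family. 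This uniqueness, together with the $o(|x|^2)$ rate, is supplied by a Monneau-type monotonicity formula that compares the boundary $L^2$ energy of $v$ against any admissible quadratic profile. This is the technically heaviest ingredient of the classical Signorini theory, but it is available in the literature cited and can be invoked directly.
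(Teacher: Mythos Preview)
Your argument is correct and follows essentially the same route as the paper: both use the Almgren frequency at the origin, the quantization $N_0\in\{1,3/2\}\cup[2,\infty)$, and the classification of homogeneous blow-ups at each of these values, deferring the technical details to \cite{ACS} and \cite{PSU}. The paper's proof is a three-line sketch of exactly this trichotomy; you have simply expanded the outline, in particular making explicit the Monneau-type monotonicity needed for uniqueness of the blow-up at frequencies $1$ and $2$, and the $C^{1,\alpha}_{loc}$ convergence away from $\{x_1=0\}$ that transfers the strict monotonicity at frequency $3/2$.
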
 

 For a real number $x$, $x^+$ and $x^-$ denote the positive and negative parts of $x$ respectively. Recall that $D_e$ denotes the differentiation in the $e$-direction.  

\begin{proof}
The Almgren frequency of $v$ at $0$ is well-defined. Denote this frequency by $\varphi$, then there are three possibilities: $\varphi=1$; or  $\varphi=3/2$; or  $\varphi\ge 2.$

If $\varphi=1$, then $v$ blows up to a $1$-homogeneous solution to \eqref{ThinObstacle}. In this case, possibility (1) as in the statement of the lemma holds. 

Similarly, if $\varphi\ge 2$, then possibility (3) happens.

When $\phi=3/2$, then $v$ blows up to a $3/2$-homogeneous solution. In this case $v$ is monotone in a direction in the hyper-plane $\{x_1=0\}$. This corresponds to possibility (2).

For details, the reader may consult \cite{ACS} or \cite{PSU}.
\end{proof}

%%%%%%%%%%%%%%%%%%%%%%%%%%%%%%%%%%%%%%%%%%%%%%%%%%%%%%%%%%%%%%%%%%%%%%%%%%%%%%%%%%%%%%%%%%%%%%%%%%%%%%%%%%%%%%%%%%%%%%%%%%%%%%%%%%%%%%%%%%%%%%%%%%%%%%%%%%%%
\section{Improvement of monotonicity and convexity}

In this section are some new observations concerning the directional monotonicity and convexity of solutions to the obstacle problem. They are at the heart of the further development of the theory.  

Roughly speaking, if the solution is `almost' monotone/ convex in $B_1$ and strictly monotone/ convex away from the free boundary, then the results here imply that the solution is indeed monotone/ convex in $B_{1/2}$. As already evident in the classical work of Caffarelli \cite{C1},  it is of fundamental importance  to develop such tools to transfer information away from the \FB to the full domain. 

Before we state the main results of this section, we begin with the construction of a barrier function. In the following lemma, $\gamma$ is the constant such that \begin{equation}\label{Gam}F(\gamma I)=1.\end{equation}Here $I$ is the identity matrix.  By \eqref{Ellipticity}, $\frac{1}{\Lambda}\le\gamma\le\Lambda.$

\begin{lem}\label{BarrierLemma}
For $0<\eta<r<1$ and $N>8\gamma \, r^2$, let $w$ be the solution to the following system
$$\begin{cases}
F(D^2w)=1 &\text{ in $B_r$,}\\w=\frac{1}{2}\gamma|x|^2 &\text{ along $\partial B_r\cap\{|x_1|>\eta\}$,}\\w=N &\text{ along $\partial B_r\cap\{|x_1|\le\eta\}$.}
\end{cases}$$

For $x_0\in B_{r/2}$, define $$w_{x_0}(x)=w(x)-w(x_0)-\nabla w(x_0)\cdot(x-x_0).$$ 

There is $\bar{\eta}$ depending on $r$, $N$ and universal constants, such that if $\eta<\bar{\eta}$, then for all $x_0\in B_{r/2}$, $w_{x_0}$ satisfies $$w_{x_0}(x)\ge \frac{1}{64}\gamma|x-x_0|^2 \text{ in $B_r$,}$$and $$w_{x_0}\ge\frac{1}{2}N\text{ along $\partial B_r\cap\{|x_1|\le\eta\}$}.$$
\end{lem}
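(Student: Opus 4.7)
Set $v_0(x) := \tfrac{1}{2}\gamma |x|^2$, so that $F(D^2 v_0) = F(\gamma I) = 1$ by \eqref{Gam}. Since $w = v_0$ on $\partial B_r \cap \{|x_1| > \eta\}$ and $w = N > \tfrac{1}{2}\gamma r^2 = v_0$ on the complementary arc (using $N > 8\gamma r^2$), comparison for $F(D^2 u) = 1$ gives $v_0 \le w \le N$ in $B_r$, and I set $e := w - v_0 \ge 0$. The plan has two parts: first, show $e$ becomes small in $C^{2,\alpha}$ on an interior ball as $\eta \to 0$; second, prove the desired lower bound on $w_{x_0}$ in two regimes---using interior convexity of $w$ near $x_0$, and a quadratic sub-barrier matched to $v_0$ far from $x_0$.

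\emph{Smallness of $e$ (the main obstacle).} I claim that for every $\delta > 0$ there is $\bar\eta = \bar\eta(\delta, r, N, \text{universal})$ with $\|e\|_{C^{2,\alpha}(B_{3r/4})} \le \delta$ whenever $\eta < \bar\eta$. Standard interior $C^{2,\alpha}$ estimates (as in Proposition~\ref{EstimateForDifference}, applied on suitable sub-balls) give
\[\|e\|_{C^{2,\alpha}(B_{3r/4})} \le C(r)\,\|e\|_{L^\infty(B_{7r/8})},\]
so it suffices to prove $\|e\|_{L^\infty(B_{7r/8})} \to 0$ as $\eta \to 0$. I argue by compactness: using $w \le N$ and interior $C^{2,\alpha}$ estimates, any sequence $\eta_k \to 0$ admits a subsequential $C^{2,\alpha}_{\mathrm{loc}}(B_r)$ limit $w_\infty$ solving $F(D^2 w_\infty) = 1$. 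Standard boundary regularity for uniformly elliptic equations on the smooth domain $B_r$ gives $w_\infty = v_0$ on $\partial B_r \setminus \{x_1 = 0\}$; since this exceptional set has vanishing $(d-1)$-dimensional Hausdorff measure, uniqueness for the Dirichlet problem forces $w_\infty \equiv v_0$ in $B_r$.

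\emph{The main estimate.} Fix $x_0 \in B_{r/2}$ and choose $\delta$ small depending on $r$ and $\gamma$. If $|x - x_0| < r/4$, then $[x_0, x] \subset B_{3r/4}$ and $D^2 w = \gamma I + D^2 e \ge \tfrac{\gamma}{2} I$ there (for $\delta < \gamma/2$), so integrating twice yields $w_{x_0}(x) \ge \tfrac{\gamma}{4}|x - x_0|^2 \ge \tfrac{\gamma}{64}|x - x_0|^2$. If $|x - x_0| \ge r/4$, I use the sub-barrier $\phi(x) := \tfrac{\gamma}{2}|x - x_0|^2 - C$ with $C := \delta(1 + 3r/2)$; since $F(D^2 \phi) = F(\gamma I) = 1 = F(D^2 w_{x_0})$, applying \eqref{CompareWithLinearizedEquation} to the constant-coefficient linearization $L_\phi$ reduces $\phi \le w_{x_0}$ in $B_r$ to its boundary version. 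On $\partial B_r \cap \{|x_1| > \eta\}$ the exact identity
\[w_{x_0}(x) = \tfrac{\gamma}{2}|x - x_0|^2 - e(x_0) - \nabla e(x_0) \cdot (x - x_0),\]
combined with $|e(x_0)| + |\nabla e(x_0)|\,|x - x_0| \le \delta(1 + 3r/2) = C$, gives $w_{x_0} \ge \phi$. On $\partial B_r \cap \{|x_1| \le \eta\}$, where $w = N$, a direct computation yields
\[w_{x_0}(x) \ge N - \tfrac{7\gamma r^2}{8} - \delta(1 + \tfrac{3r}{2}) \ge \tfrac{N}{2}\]
(using $N > 8\gamma r^2$ and $\delta$ small), which is simultaneously the second conclusion of the lemma and easily dominates $\phi \le \tfrac{9\gamma r^2}{8} < N/2$. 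Hence $\phi \le w_{x_0}$ throughout $B_r$; taking $\delta$ small enough that $C \le \tfrac{31\gamma r^2}{1024}$ gives $\phi(x) \ge \tfrac{\gamma}{64}|x - x_0|^2$ whenever $|x - x_0|^2 \ge r^2/16$, which completes the first conclusion.
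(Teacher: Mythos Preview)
Your proof is correct and follows the same approach as the paper: define $e = w - \tfrac{1}{2}\gamma|x|^2 \ge 0$, show $\|e\|_{C^{2,\alpha}(B_{3r/4})} \to 0$ as $\eta \to 0$ via interior estimates, and derive both conclusions from the identity $w_{x_0} = \tfrac{\gamma}{2}|x-x_0|^2 + e_{x_0}$ combined with $e \ge 0$ in $B_r$ and $e \ge \tfrac34 N$ on the strip. The only notable difference is that the paper establishes $e \to 0$ in $L^\infty_{\mathrm{loc}}$ by a direct weak-Harnack decay argument (dyadic geometric decay away from $\partial B_r \cap \{x_1=0\}$) rather than your compactness-plus-removability route, and concludes more tersely by invoking $e \ge 0$ pointwise instead of running your explicit sub-barrier comparison---but these are minor variations on the same idea.
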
 

\begin{proof}
Define $\varphi=w-\frac{1}{2}\gamma|x|^2$ in $B_r$. Then Proposition \ref{EstimateForDifference} gives $$\|\varphi\|_{C^{2,\alpha}(B_{\frac{3}{4}r})}\le C_r\|\varphi\|_{\mathcal{L}^\infty(B_{\frac{7}{8}r})},$$ for some $C_r$ depending on universal constants and $r$.

We claim that as $\eta\to0$, 
\begin{equation}\label{convto0}
\mbox{$\varphi$ converges locally uniformly in $B_r$ to $0$. }
\end{equation}
Consequently, there is a modulus of continuity $\omega$, depending on universal constants, $N$ and $r$, such that $$C_r\|\varphi\|_{\mathcal{L}^\infty(B_{\frac{7}{8}r})}\le\omega(\bar{\eta})$$ whenever $\eta<\bar{\eta}.$ Thus the previous estimate gives \begin{equation}\label{Barrier1}
\|\varphi\|_{C^{2,\alpha}(B_{\frac{3}{4}r})}\le \omega(\bar{\eta})
\end{equation}  whenever $\eta<\bar{\eta}.$

In order to prove the claim \eqref{convto0} we notice that $\varphi$ satisfies a linear elliptic equation 
$$a_{ij}(x) \varphi_{ij}=0 \quad \mbox{in} \quad B_r,$$
with ellipticity constant $\Lambda$. Also, $\varphi$ vanishes on $\partial B_r$ except on $\partial B_r\cap\{|x_1|\le\eta\}$ where $N \ge \varphi \ge \frac 34 N$. We extend $\varphi=0$ outside $B_r$, and by the weak Harnack inequality it follows that $\max \, \varphi$ decreases geometrically on the outward dyadic regions centered around a point $y \in \partial B_r \cap \{x_1=0\}$,
$$ B_{2^{1-k}}(y) \setminus B_{2^{-k}}(y) \quad \quad \mbox{as long as} \quad  \eta \le 2^{-k} \le \frac r 4.$$
We easily obtain the claim \eqref{convto0} as we let $\eta \to 0$.

Define $\varphi_{x_0}(x)=\varphi(x)-\varphi(x_0)-\nabla\varphi(x_0)\cdot(x-x_0)$. 
The conclusion follows from \eqref{Barrier1} by using 
$$w_{x_0}=\varphi_{x_0} + \frac{1}{2}\gamma|x-x_0|^2,$$
and $\varphi \ge 0$ in $B_r$, $\varphi \ge \frac 34 N$ on $\partial B_r\cap\{|x_1|\le\eta\}$.
\end{proof} 

With this we prove the following improvement of monotonicity lemma. Recall the class of solutions $\mathcal{S}(p,\eps,r)$ is defined in Definition \ref{ApproxClass}, and that $D_e$ is the differentiation along direction $e$.

\begin{lem}\label{Monotonicity1}
Suppose $u\in\mathcal{S}(p,\eps,r)$  satisfies the following for some constants $K$, $\sigma$, and $0<\eta<r$,  and a direction $e\in\Sph$:

$$D_eu\ge -K\eps \text{ in $B_r$},$$ and $$D_eu\ge\sigma\eps \text{ in $B_r\cap\{|x_1|\ge\eta\}$}.$$

There is $\bar{\eta}$, depending on universal constants, $r$, $\sigma$ and $K$, such that if $\eta\le\bar{\eta}$, then $$D_eu\ge 0 \text{ in $B_{r/2}$.}$$
\end{lem}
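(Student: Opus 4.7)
My plan is to use the barrier $w_{x_0}$ from Lemma \ref{BarrierLemma} as an auxiliary function in a comparison argument for the linearized operator $L_u$.

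First I would reduce to the case $x_0\in B_{r/2}\cap\PosS$: on the contact set $D_e u=0$ follows directly from \eqref{Contact}. In $B_r\cap\PosS$ the function $\hat u:=D_e u$ satisfies the linear elliptic equation $L_u\hat u=0$ by \eqref{EquationForDerivatives}, and vanishes on the free boundary $\partial\PosS\cap B_r$ since $\nabla u=0$ there. By hypothesis $\hat u\ge -K\eps$ on the ``bad'' arc $\partial B_r\cap\{|x_1|\le\eta\}$ and $\hat u\ge\sigma\eps$ on the ``good'' arc $\partial B_r\cap\{|x_1|>\eta\}$, so $\hat u$ solves a Dirichlet-type problem in $B_r\cap\PosS$ with these three pieces of boundary data.

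Next I would apply Lemma \ref{BarrierLemma} with $N$ fixed depending on $r$, and exploit the key quantitative facts that $w_{x_0}\ge N/2$ on the bad arc and $w_{x_0}\ge 0$ everywhere in $B_r$. Taking $\eta<\bar\eta$ makes $w$ $C^{2,\alpha}$-close to $\frac{\gamma}{2}|x|^2$ on compact subsets of $B_r$, so $D^2 w\ge 0$ there; combined with the ellipticity fact that $(F_{ij}(D^2u))$ is a positive semidefinite symmetric tensor, this yields $L_u w_{x_0}\ge 0$, i.e.\ $w_{x_0}$ is an $L_u$-subsolution in $B_r\cap\PosS$.

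Comparing the $L_u$-harmonic function $\hat u$ with the $L_u$-harmonic extension of $\sigma\eps\chi_{\mathrm{good}}-K\eps\chi_{\mathrm{bad}}+0\cdot\chi_{\mathrm{fb}}$ in $B_r\cap\PosS$ then gives the lower bound
$$\hat u(x_0)\ge \sigma\eps\,\mu_{\mathrm{good}}(x_0)-K\eps\,\mu_{\mathrm{bad}}(x_0),$$
where $\mu_{\mathrm{good}}$ and $\mu_{\mathrm{bad}}$ denote the $L_u$-harmonic measures at $x_0$ of the good and bad arcs of $\partial B_r$ relative to this domain (the free boundary contributes nothing since $\hat u$ vanishes there). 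To conclude $\hat u(x_0)\ge 0$ it suffices to show $\sigma\,\mu_{\mathrm{good}}(x_0)\ge K\,\mu_{\mathrm{bad}}(x_0)$, and this is where the barrier enters: the subsolution $w_{x_0}$, being bounded below by $N/2$ on the bad arc and by $0$ elsewhere on the boundary, together with the interior smallness of $w_{x_0}$ inherited from the $C^{2,\alpha}$ closeness of $w$ to $\frac{\gamma}{2}|x|^2$, lets one compare the two harmonic measures and obtain $\mu_{\mathrm{bad}}(x_0)\le(\sigma/K)\,\mu_{\mathrm{good}}(x_0)$ once $\bar\eta$ is chosen small enough in terms of $r,\sigma,K$ and universal constants.

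I expect the main technical difficulty to be making this harmonic-measure comparison quantitative and uniform in the position of $x_0$ relative to the free boundary: points $x_0$ lying close to $\partial\PosS$ are delicate because the free boundary absorbs harmonic measure, but since $\hat u$ vanishes identically on the free boundary this absorption only reduces the weight carried by the good arc without introducing new negative contributions. The quadratic lower bound $w_{x_0}\ge\frac{\gamma}{64}|x-x_0|^2$ from Lemma \ref{BarrierLemma} is the crucial tool for tracking this comparison quantitatively near $x_0$.
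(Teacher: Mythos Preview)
Your setup is right --- reduce to $x_0\in B_{r/2}\cap\PosS$, use $L_u(D_eu)=0$ in $\PosS$, and invoke the barrier $w_{x_0}$ from Lemma~\ref{BarrierLemma}. But there is a genuine gap in how you deploy the barrier.

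You claim $L_u w_{x_0}\ge 0$ because $D^2 w\ge 0$ on compact subsets of $B_r$ (from the $C^{2,\alpha}$-closeness of $w$ to $\frac{\gamma}{2}|x|^2$). This fails near $\partial B_r$: the boundary data of $w$ jump from $\frac{\gamma}{2}r^2$ to $N$ across the bad arc, so $D^2w$ is not controlled there, and the comparison domain $B_r\cap\PosS$ reaches all the way to $\partial B_r$. Even if one could patch this, having $w_{x_0}$ as a \emph{subsolution} does not give an upper bound on $\mu_{\mathrm{bad}}(x_0)$; for that you would need a supersolution dominating the indicator of the bad arc. Your harmonic-measure sketch does not explain how the subsolution property yields $\mu_{\mathrm{bad}}\le(\sigma/K)\mu_{\mathrm{good}}$, and the difficulty you anticipate near the free boundary (where $\mu_{\mathrm{good}}$ can be arbitrarily small) is real and not resolved by the quadratic lower bound on $w_{x_0}$.

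The paper's fix is to compare $D_eu$ not with a multiple of $w_{x_0}$ but with $c\eps(u-w_{x_0})$. The point is that $F(D^2u)=F(D^2w)=1$ in $\PosS$, so convexity \eqref{CompareWithLinearizedEquation} gives $L_u(u-w_{x_0})=L_u(u-w)\ge 0$ \emph{everywhere} in $B_r\cap\PosS$, with no need to control $D^2w$ near the boundary. One then chooses $c$ small so that $c\|u\|_\infty\le\sigma$ (handling the good arc, using $w_{x_0}\ge 0$) and $N$ large so that $c\cdot\frac{N}{2}\ge K+c\|u\|_\infty$ (handling the bad arc). On $\partial\PosS$ the comparison function is $\le 0$ since $u=0$ and $w_{x_0}\ge 0$. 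The maximum principle then gives $D_eu(x_0)\ge c\eps(u(x_0)-w_{x_0}(x_0))=c\eps\,u(x_0)\ge 0$. The inclusion of $u$ in the comparison function is exactly what absorbs the free-boundary contribution and makes the argument uniform in the position of $x_0$.
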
 

\begin{proof}
Choose $c>0$ small, depending on universal constants and $\sigma$, such that $$c\|u\|_{\mathcal{L}^\infty(B_r)}\le\sigma.$$ Then define $N=\max\{4K/c,10\|u\|_{\mathcal{L}^\infty(B_r)}\},$ depending only on $K,\sigma$ and universal constants since we have the universal bound \eqref{UniversalBound}.

Let $\bar{\eta}$ be the constant given in Lemma \ref{BarrierLemma}, depending on $N$ and $r$. Let $w_{x_0}$ be the barrier as in that lemma. Assume $\eta<\bar{\eta}.$

If we define $U=B_r\cap\PosS$ and pick $x_0\in B_{r/2}\cap\PosS$, then on $\partial U\subset(\partial B_r\cap\{|x_1|\ge\eta\})\cup(\partial B_r\cap\{|x_1|<\eta\})\cup\partial\PosS$, one has 

\begin{equation*}
c\eps(u-w_{x_0})(x)\le c\eps u\le\sigma\eps \text{ along $\partial B_r\cap\{|x_1|\ge\eta\}$;}
\end{equation*}

\begin{align*}
c\eps(u-w_{x_0})(x)&\le c\eps(u-\frac{1}{2}N)\\&\le c\eps(-\frac{1}{4}N)\\&\le-K\eps \text{ along $\partial B_r\cap\{|x_1|\le\eta\}$;}
\end{align*}
and $$c\eps(u-w_{x_0})(x)\le 0 \text{ along $\partial\PosS$.}$$

Our assumptions on $D_eu$ and \eqref{Contact} imply $$D_eu\ge c\eps(u-w_{x_0})\text{ along $\partial U$.}$$

Now with  \eqref{CompareWithLinearizedEquation} and \eqref{EquationForDerivatives}, we have $L_u(D_eu)=0$ and $L_u(u-w_{x_0})\ge 0$ in $U$. Thus comparison principle gives $$D_eu(x_0)\ge c\eps(u-w_{x_0})(x_0)=c\eps u(x_0)\ge 0.$$

Since this is true for all $x_0\in B_{r/2}\cap\PosS$ and $D_eu=0$ in $\{u=0\}$, $D_eu\ge 0$ in $B_{r/2}$.
\end{proof} 

A slightly different version is also useful:

\begin{lem}\label{Monotonicity2}

Suppose $u\in\mathcal{S}(p,\eps,r)$  satisfies the following for some constants $K$, $\sigma$, and $0<\eta<r$,  and a direction $e\in\Sph$:

$$D_eu\ge -K\eps \text{ in $B_r$},$$ $$D_eu\ge0 \text{ in $B_r\cap\{|x_1|\ge\eta\}$},$$ and 
$$D_eu\ge\sigma\eps \text{ in $B_r\cap\{u>\frac{1}{256}\gamma r^2\}$},$$

There is $\bar{\eta}$, depending on universal constants, $r$, $\sigma$ and $K$, such that if $\eta\le\bar{\eta}$, then $$D_eu\ge 0 \text{ in $B_{r/2}$.}$$
\end{lem}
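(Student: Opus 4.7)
My plan is to adapt the proof of Lemma \ref{Monotonicity1} almost verbatim, using the same barrier $w_{x_0}$ from Lemma \ref{BarrierLemma} with the same choice of parameters: pick $c>0$ with $c\|u\|_{\mathcal{L}^\infty(B_r)}\le \sigma$, set $N=\max\{4K/c,\,10\|u\|_{\mathcal{L}^\infty(B_r)}\}$, and let $\bar\eta$ be as in Lemma \ref{BarrierLemma}. For an arbitrary $x_0\in B_{r/2}\cap\PosS$, I consider the region $U=B_r\cap\PosS$ and the test function $\phi:=c\eps(u-w_{x_0})$. By \eqref{CompareWithLinearizedEquation}, $L_u(u-w_{x_0})\ge F(D^2u)-F(D^2w_{x_0})=0$ in $U$, while $L_u(D_eu)=0$ by \eqref{EquationForDerivatives}; hence $L_u(D_eu-\phi)\le 0$, and by the maximum principle it is enough to verify $D_eu\ge \phi$ on $\partial U$.

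The only portion of $\partial U$ where the hypothesis differs from Lemma \ref{Monotonicity1} is on $\partial B_r\cap\{|x_1|\ge \eta\}$, and this forces a further split according to whether $u>\tfrac1{256}\gamma r^2$ or not. The three easy cases are identical to those of Lemma \ref{Monotonicity1}: on $\partial\PosS$ we have $D_eu=0$ and $u=0\le w_{x_0}$, so $\phi\le 0$; on $\partial B_r\cap\{|x_1|<\eta\}$ the barrier gives $w_{x_0}\ge N/2$, and the choice of $N$ forces $\phi\le -K\eps\le D_eu$; and on $\partial B_r\cap\{|x_1|\ge\eta\}\cap\{u>\tfrac1{256}\gamma r^2\}$ the third hypothesis gives $D_eu\ge \sigma\eps$, while $w_{x_0}\ge 0$ yields $\phi\le c\eps u\le \sigma\eps$.

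The only conceptually new case, and the main thing one has to notice, is the remaining piece $\partial B_r\cap\{|x_1|\ge\eta\}\cap\{u\le\tfrac1{256}\gamma r^2\}$, where the hypotheses only give $D_eu\ge 0$. Here I exploit a numerical coincidence between the threshold $\tfrac{1}{256}\gamma r^2$ in the hypothesis and the barrier lower bound: since $x\in\partial B_r$ and $x_0\in B_{r/2}$ force $|x-x_0|\ge r/2$, Lemma \ref{BarrierLemma} gives
$$w_{x_0}(x)\ge \tfrac{1}{64}\gamma|x-x_0|^2\ge \tfrac{1}{64}\gamma(r/2)^2=\tfrac{1}{256}\gamma r^2\ge u(x),$$
so $\phi\le 0\le D_eu$ on this piece as well. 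This matching of constants is the only new ingredient; in hindsight it explains the specific constant $\tfrac{1}{256}\gamma r^2$ chosen in the statement.

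Having verified $D_eu\ge \phi$ on all of $\partial U$, the maximum principle gives $D_eu(x_0)\ge \phi(x_0)=c\eps u(x_0)\ge 0$, using $w_{x_0}(x_0)=0$ and $u(x_0)\ge 0$. As $x_0\in B_{r/2}\cap\PosS$ was arbitrary and $D_eu=0$ on $\{u=0\}$ by \eqref{Contact}, we conclude $D_eu\ge 0$ in $B_{r/2}$. The main obstacle is really just spotting the barrier/threshold numerology; no new analytic tool is required beyond those already developed for Lemma \ref{Monotonicity1}.
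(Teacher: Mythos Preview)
Your proof is correct and follows essentially the same approach as the paper: you reuse the barrier $w_{x_0}$ and the constants from Lemma~\ref{Monotonicity1}, split $\partial B_r\cap\{|x_1|\ge\eta\}$ according to whether $u>\tfrac{1}{256}\gamma r^2$, and on the new piece use the same numerical matching $w_{x_0}\ge\tfrac{1}{64}\gamma|x-x_0|^2\ge\tfrac{1}{256}\gamma r^2\ge u$ to conclude $\phi\le 0\le D_eu$. This is exactly the paper's argument.
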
 

\begin{proof}
The proof is almost the same as the previous proof. The only difference happens for the comparison along the boundary $\partial B_r\cap\{|x_1|\ge\eta\}.$

On $\partial B_r\cap\{|x_1|\ge\eta\}\cap\{u>\frac{1}{256}\gamma r^2\},$ we still have $D_eu\ge\sigma \eps$, and the same comparison $D_eu\ge c\eps(u-w_{x_0})$ holds. 

On $\partial B_r\cap\{|x_1|\ge\eta\}\cap\{u\le\frac{1}{256}\gamma r^2\},$ we invoke $$u-w_{x_0}\le u-\frac{1}{64}\gamma|x-x_0|^2\le u-\frac{1}{64}\gamma (1/2r)^2\le 0$$ for $x_0\in B_{r/2}.$ Thus along this piece of the boundary we still have $$D_eu\ge 0\ge c\eps(u-w_{x_0}).$$
\end{proof}

Finally we have the following improvement of convexity estimate:
\begin{lem}\label{Convexity}
Suppose $u\in\SPE.$ There is a universal constant $C$ such that if $\Dee p\ge C\eps$ along some direction $e\in\Sph$, then $$\Dee u\ge 0 \text{ in $B_{1/2}$.}$$
\end{lem}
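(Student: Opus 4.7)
The plan is a comparison argument in the spirit of Lemma \ref{Monotonicity2}, applied to the second derivative $\Dee u$ in place of the first derivative $D_e u$.

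\textbf{Step 1 (Interior strict convexity).} I would first show that, provided $C$ is chosen sufficiently large (universal), there exist small universal constants $\mu_0, \sigma_0 > 0$ such that $\Dee u \ge \sigma_0 \eps$ on $\{u \ge \mu_0\} \cap B_{3/4}$. If $u(x_0) \ge \mu_0$, the $C^{1,1}$ bound of Proposition \ref{C11} (combined with the universal bound \eqref{UniversalBound} and the universal bound on $p \in \UQua$ noted after Definition \ref{QuadraticSolution}) yields a ball $B_{r_0}(x_0) \subset \PosS$, with $r_0$ depending only on $\mu_0$ and universal constants. On this ball both $u$ and $p$ solve $F(D^2 \cdot) = 1$, so the first part of Proposition \ref{EstimateForDifference}, after rescaling, gives $|\Dee u(x_0) - \Dee p| \le C'(\mu_0)\eps$ with $C'$ universal. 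Combined with the hypothesis $\Dee p \ge C\eps$, this forces $\Dee u(x_0) \ge (C - C')\eps \ge \sigma_0 \eps$ once $C$ is taken large.

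\textbf{Step 2 (Extension via barrier).} Next, I would transfer the pointwise lower bound from $\{u \ge \mu_0\} \cap B_{3/4}$ to all of $B_{1/2}$ by a maximum-principle comparison. The key analytical inputs are: $L_u(\Dee u) \le 0$ in $\PosS$ by \eqref{EquationForDerivatives}; $\Dee u \ge 0$ on $\{u=0\}$ in the viscosity sense by \eqref{Contact}; and $\Dee u \ge -c_0 \eps$ throughout $B_1$ by \eqref{convexity}. Following the template of Lemma \ref{Monotonicity2}, I would compare $\Dee u$ from below with a suitably scaled multiple $c\eps(u - w_{x_0})$ of the barrier constructed in Lemma \ref{BarrierLemma}, with the coordinate $x_1$ replaced by $x \cdot e$. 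The signs are favorable, since $L_u(u - w_{x_0}) \ge 0$ in $\PosS$ by \eqref{CompareWithLinearizedEquation} and $F(D^2 w) = 1$, while $L_u(\Dee u) \le 0$, so the maximum principle controls $\Dee u$ from below by this barrier.

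The main obstacle I expect is the transition region $\{0 < u < \mu_0\} \cap B_{3/4}$, where neither the interior strict positivity from Step 1 nor the viscosity contact-set inequality is directly available. This is precisely what the barrier of Lemma \ref{BarrierLemma} is designed to handle: by choosing the thinness parameter $\eta$ small enough, $w_{x_0}$ becomes large on a thin slab approximating the free boundary while matching $\frac{1}{2}\gamma |x-x_0|^2$ elsewhere on $\partial B_{3/4}$, so that the boundary comparison closes. The universal constant $C$ in the hypothesis must then be chosen large enough that the gain $\sigma_0 \eps$ from Step 1 dominates the $c_0 \eps$ deficit from \eqref{convexity} together with all constants lost in the barrier comparison, producing $\Dee u \ge 0$ in $B_{1/2}$.
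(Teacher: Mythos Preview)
Your Step 1 is fine and matches the paper: on $\{u\ge\mu_0\}\cap B_{3/4}$ one has a ball of uniformly positive radius inside $\{u>0\}$, so Proposition \ref{EstimateForDifference} gives $\Dee u\ge (C-C')\eps$ there.

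The gap is in Step 2. The barrier $w_{x_0}$ of Lemma \ref{BarrierLemma} is only useful when the ``bad'' boundary region is contained in a thin slab $\{|x_1|\le\eta\}$ with $\eta$ as small as you like; that is exactly the hypothesis built into Lemmas \ref{Monotonicity1} and \ref{Monotonicity2}. In the present lemma there is no such hypothesis, and it cannot be manufactured: the set $\partial B_{3/4}\cap\{u<\mu_0\}$ need not lie in any thin slab in the $e$-direction (or any other direction). For instance if $p$ is close to $\tfrac12 a_1x_1^2$ with $a_1$ of order $1$, then $\{u<\mu_0\}$ is roughly a slab of \emph{fixed} universal width $\sim\sqrt{\mu_0}$, so you cannot take $\eta\to 0$. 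Without that, the boundary comparison you describe does not close.

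The paper avoids this difficulty by using a much simpler barrier. For $x_0\in B_{1/2}\cap\{u>0\}$ it compares
\[
h(x)=\Dee u(x)-\tfrac{64c_0}{\gamma}\eps\bigl(u(x)-\tfrac12\gamma|x-x_0|^2\bigr)
\]
in $U=B_{3/4}\cap\{u>0\}$. Since $F(\gamma I)=1$, one has $L_u\bigl(u-\tfrac12\gamma|\cdot-x_0|^2\bigr)\ge 0$ and hence $L_u h\le 0$. The point is that on $\partial B_{3/4}$ one always has $|x-x_0|\ge\tfrac14$, so $\tfrac12\gamma|x-x_0|^2\ge\tfrac{1}{32}\gamma$; thus on $\partial B_{3/4}\cap\{u\le\tfrac{1}{64}\gamma\}$ the bracket is $\le -\tfrac{1}{64}\gamma$ and $h\ge -c_0\eps+c_0\eps=0$ using only \eqref{convexity}. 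On $\partial B_{3/4}\cap\{u>\tfrac{1}{64}\gamma\}$ your Step 1 applies, and on $\partial\{u>0\}$ one uses $u=0$ and $\Dee u\ge 0$. No slab geometry is needed.
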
 

\begin{proof}
Let $\gamma$ be the constant as in \eqref{Gam}, and $c_0$ be the constant as in Definition \ref{ApproxClass}. 

For $x_0\in B_{1/2}\cap\PosS$,  define $$h(x)=\Dee u(x)-\frac{64c_0}{\gamma}\eps(u(x)-\frac{1}{2}\gamma |x-x_0|^2).$$ Define $U=B_{3/4}\cap\PosS$, we have the following

\textit{Claim: For some universal constant $C$, if $\Dee p\ge C\eps$, then $h\ge 0$ along $\partial U$.}

Note that by  \eqref{CompareWithLinearizedEquation} and \eqref{EquationForDerivatives}, $$L_u(h)\le 0  \text{ in $U$.}$$ Thus once the claim is proved,  $h\ge 0$ in $U$ by maximum principle. In particular, we have $\Dee u(x_0)\ge 0$. Together with \eqref{Contact}, we have $\Dee u\ge 0$ in the entire $B_{1/2}$.

Therefore, it suffices to prove the claim. 

First we note that along $\partial\PosS$, $\Dee u\ge 0$ and $u=0$, thus $h\ge 0$ along this part of $\partial U$. 

We divide the other part $\partial B_{3/4}\cap\{u>0\}$ into two pieces $$\partial B_{3/4}\cap\{u>0\}=(\partial B_{3/4}\cap\{u\le \frac{1}{64}\gamma\})\cup (\partial B_{3/4}\cap\{u> \frac{1}{64}\gamma\}).$$

Along the first piece $\partial B_{3/4}\cap\{u\le \frac{1}{64}\gamma\}$, 
\begin{align*}
h(x)&\ge -c_0\eps-\frac{64c_0}{\gamma}\eps(u(x)-\frac{1}{2}\gamma |x-x_0|^2)\\&\ge-c_0\eps-\frac{64c_0}{\gamma}\eps(\frac{1}{64}\gamma-\frac{1}{32}\gamma)\\&=0.
\end{align*}

It remains to deal with $y_0\in  \partial B_{3/4}\cap\{u> \frac{1}{64}\gamma\}.$

Firstly the universal bound \eqref{UniversalBound} and Proposition \ref{C11} give a universal $r_0>0$ such that $$dist(y_0,\{u=0\})\ge r_0.$$

In particular $F(D^2u)=1$ in $B_{r_0}$, and we can apply Proposition \ref{EstimateForDifference} to get $$|D^2u(y_0)-D^2p|\le C_0\eps$$ for a universal constant $C_0.$ Therefore, $\Dee u(y_0)\ge C\eps-C_0\eps.$ Consequently, for $y_0\in  \partial B_{3/4}\cap\{u> \frac{1}{64}\gamma\},$
\begin{align*}
h(y_0)&\ge (C-C_0)\eps-\frac{64c_0}{\gamma}\eps u(y_0)\\&\ge\eps(C-C_0-\frac{64c_0}{\gamma}\max_{B_1}u).
\end{align*}Again note the universal bound on $\max u$ as in \eqref{UniversalBound}, if we choose $C$ universally large, then $h\ge 0$ on this last piece of $\partial U$. 

This completes the proof for the claim.
\end{proof}

%%%%%%%%%%%%%%%%%%%%%%%%%%%%%%%%%%%%%%%%%%%%%%%%%%%%%%%%%%%%%%%%%%%%%%%%%%%%%%%%%%%%%%%%%%%%%%%%%%%%%%%%%%%%%%%%%%%%%%%%%%%%%%%%%%%%%%%%%%%%%%%%%%%%%%%%%%%%
\section{Quadratic approximation of solution: Case 1}

In this section and the next, we use the technical tools developed in the previous sections to study the behaviour of our solution near a singular point, say, $0\in\Singu.$ 

The classical approach is to study the rescales of $u$, $$u_r(x)=\frac{1}{r^2}u(rx)$$ as $r\to 0$. Proposition \ref{C11} gives enough compactness to get convergence of $u_{r_j}$ to some quadratic polynomial, say $p$,  along  a subsequence $r_j\to 0.$ If the limit does not depend on the particular subsequence, then there is a well-defined stratification of $\Singu$ depending on the dimension of $ker(D^2p)$. If there is a rate of convergence of $u_r\to p$, then we get regularity of the singular set near $0\in\Singu.$

With the help of monotonicity formulae, this program has been executed with various degrees of success in  \cite{C2}, \cite{CSV}, \cite{FSe}, \cite{M} and \cite{W}. One idea behind these works is that once $u_{r_0}$ is close $p$ for a particular $r_0$, then monotonicity formulae imply $u_r$ remains close to $p$ for all $r<r_0$. 

Since no monotonicity formula is available in our problem, we do not have access to all small scales. Instead, we proceed by performing an iterative scheme. Let $\rho\in(0,1)$, the building block of this scheme is to study the following question: If $u$ is close to $p$ in $B_1$, can we approximate $u$ better in $B_{\rho}$?

Quantitatively, we seek to prove the following: 

If $|u-p|<\eps$ in $B_1$ for some small $\eps$, then we can find a quadratic polynomial $q$ such that $|u-q|<\eps'\rho^2$ in $B_\rho$, where $\eps'<\eps$.

The rate of decay  $\eps\to\eps'$ is linked to the rate of convergence in the blow-up procedure. 

Define the normalized solution $\hat{u}_\eps=\frac{1}{\eps}(u-p),$ and suppose we can show that as $\eps\to 0$, $\hat{u}_\eps\to \hat{u}_0.$  Then the formal expansion $$u=p+\eps\hat{u}_0+\eps o(1)$$ shows that a better approximation in $B_\rho$ follows if $\hat{u}_0$ is $C^2$ near $0$. 

To this end, we need to separate two different cases.  

Let $\lambda_1\ge\lambda_2\ge\dots\ge\lambda_d\ge 0$ denote the eigenvalues of $D^2p$. Depending on their sizes, the contact set $\{u=0\}$ concentrates along subspaces of various dimensions.  When $\lambda_2\le C\eps$, $p\sim \frac{1}{2}(x\cdot e)^2$ and the contact set concentrates along a $(d-1)$-dimensional subspace $\{x\cdot e=0\}.$  When $\lambda_2\gg\eps$, the contact set concentrates along a subspace with higher co-dimension. 

In this section, we deal with the first case when $\lambda_2\le C\eps$. 

In this case $\hat{u}_\eps$ converges to the solution of the thin obstacle problem \eqref{ThinObstacle}, and in particular Proposition \ref{ExpansionForThinObstacle} applies to the limit $\hat{u}_0.$ To show $\hat{u}_0$ is $C^2$ near $0$, we need to rule out possibilities (1) and (2) as in the statement of Proposition \ref{ExpansionForThinObstacle}. This can be achieved using explicit barriers and the lemmata in the previous section. 

In this section, we decompose $\R^d=\R\times\R^{d-1}$ and write $x=(x_1,x')$, where $x'$ is the projection of $x$ onto the subspace $\{x_1=0\}.$ Similarly, for $E\subset\R^d$, we define $$E'=E\cap\{x_1=0\}.$$

The \textit{main result} of this section is the following:

\begin{lem}[Quadratic approximation of solution: Case 1]\label{QuadraticApproxTopStratum}
Suppose for some $\kappa>0$, we have $$u\in\SPE$$ for some $p\in\Qua$ with $$\lambda_2(D^2p)\le\kappa\eps,$$ and $$0\in\Singu.$$

There are constants $\bar{\eps},\beta\in(0,1)$ and $\bar{r}\in(0,1/2)$, depending on universal constants and $\kappa$, such that if $\eps<\bar{\eps}$, then $$u\in\mathcal{S}(p',\eps',r)$$ for some $p'\in\Qua$, $\eps'=(1-\beta)\eps$ and $r\in(\bar{r},1/2).$
\end{lem}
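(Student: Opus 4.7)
The plan is a compactness argument. Assume the conclusion fails: there exist sequences $\eps_k \to 0$, $p_k \in \Qua$ with $\lambda_2(D^2 p_k) \le \kappa\eps_k$, and $u_k \in \mathcal{S}(p_k, \eps_k, 1)$ with $0 \in \Sigma(u_k)$, such that no prescribed $\beta \in (0,1)$ and $\bar r \in (0,1/2)$ yields some $p' \in \Qua$, $r \in (\bar r, 1/2)$ with $u_k \in \mathcal{S}(p', (1-\beta)\eps_k, r)$. After a rotation and passing to a subsequence, $D^2 p_k \to \lambda_1 e_1 \otimes e_1$ with $F(\lambda_1 e_1 \otimes e_1) = 1$; thus $p_k \to p_0 := \tfrac{\lambda_1}{2} x_1^2$ in $C^2$. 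The rescaled functions $\hat u_k := \eps_k^{-1}(u_k - p_k)$ are uniformly bounded by $1$ in $B_1$ by Definition \ref{ApproxClass}, and Proposition \ref{C11} gives locally uniform $C^{1,1}$ bounds, so a subsequence converges locally uniformly to some $\hat u_0$.

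I would next identify $\hat u_0$. Away from $\{x_1=0\}$ and for $k$ large, $p_k$ dominates $\eps_k\hat u_k$, hence $u_k>0$ and $F(D^2 u_k)=1$; linearizing $F$ at $D^2 p_k$ and using $F(D^2 p_k)=1$ gives in the limit $L_{p_0}(\hat u_0)=0$ on $B_1\setminus\{x_1=0\}$. The non-negativity $u_k\ge 0$ becomes $\hat u_0\ge 0$ on $\{x_1=0\}$, while the almost-convexity in Definition \ref{ApproxClass} translates to $L_{p_0}$-superharmonicity of $\hat u_0$ across the thin set. After a linear change of variables reducing $L_{p_0}$ to the Laplacian, $\hat u_0$ solves \eqref{ThinObstacle}, and Proposition \ref{ExpansionForThinObstacle} yields the trichotomy.

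The main obstacle is eliminating cases (1) and (2) of Proposition \ref{ExpansionForThinObstacle}. In case (2), where $D_e\hat u_0>0$ on $B_r\cap\{x_1\ne 0\}$ for some $e\perp e_1$, the strict monotonicity transfers to $D_e u_k\ge\sigma\eps_k$ on $B_r\cap\{|x_1|\ge\eta\}$ (for a small fixed $\eta$), while the $C^{1,\alpha}$ bound on $\hat u_k$ supplies $D_e u_k\ge -K\eps_k$ globally; Lemma \ref{Monotonicity1} then forces $D_e u_k\ge 0$ on $B_{r/2}$. The resulting translation invariance of $\{u_k=0\}$ in direction $-e$, combined with the contact set's concentration in a strip of width $O(\sqrt{\eps_k})$ around $\{x_1=0\}$, yields after rescaling to scale $\sim\sqrt{\eps_k}$ a violation of Proposition \ref{VanishingThickness}. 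Case (1) is handled by a parallel barrier argument: a non-trivial linear profile $a_+x_1^+ + a_-x_1^-$ (with $a_++a_-\le 0$ forced by superharmonicity) compels $u_k=0$ on a one-sided strip of thickness $\asymp\eps_k$ adjacent to $\{x_1=0\}$, again contradicting vanishing thickness at the scale $\eps_k$.

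In the remaining case (3), $\hat u_0(x)=\tfrac12 x\cdot Ax+o(|x|^2)$ with $L_{p_0}(A)=0$ and $e\cdot Ae\ge 0$ for $e\perp e_1$. Prescribing $\beta\in(0,1)$ in advance, choose $\rho=\rho(\beta)\in(0,1/2)$ small enough that $|\hat u_0(x)-\tfrac12 x\cdot Ax|\le (1-2\beta)\rho^2$ on $B_\rho$. Define $p'_k:=p_k+\tfrac{\eps_k}{2}x\cdot\tilde A_k x$, with $\tilde A_k=A+\delta_k e_1\otimes e_1$ and $\delta_k=o(1)$ chosen so that $F(D^2 p'_k)=1$ exactly; since $\lambda_1$ is bounded below and $A$ is non-negative perpendicular to $e_1$, $D^2 p'_k\ge 0$ for $k$ large, so $p'_k\in\Qua$. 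The locally uniform convergence $\hat u_k\to\hat u_0$ gives, for $k$ large,
\[
|u_k-p'_k|=\eps_k\bigl|\hat u_k-\tfrac12 x\cdot\tilde A_k x\bigr|\le(1-\beta)\eps_k\rho^2\text{ on }B_\rho,
\]
while the convexity bound $D^2 u_k\ge -c_0(1-\beta)\eps_k I$ on $B_\rho$ is inherited from the scale-$1$ bound in Definition \ref{ApproxClass}. Thus $u_k\in\mathcal{S}(p'_k,(1-\beta)\eps_k,\rho)$ with $\rho>\bar r$, contradicting the standing assumption and finishing the proof.
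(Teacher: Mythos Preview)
Your compactness skeleton and the identification of the limit as a thin-obstacle solution track the paper's argument, but there is a genuine gap in the final paragraph that makes the proof fail.

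\textbf{The convexity improvement is not free.} You write that ``the convexity bound $D^2 u_k\ge -c_0(1-\beta)\eps_k I$ on $B_\rho$ is inherited from the scale-$1$ bound in Definition~\ref{ApproxClass}.'' This is false. Membership $u_k\in\mathcal{S}(p_k,\eps_k,1)$ gives only $D^2u_k\ge -c_0\eps_k I$; nothing improves the constant when you restrict to $B_\rho$. But $u_k\in\mathcal{S}(p',(1-\beta)\eps_k,\rho)$ \emph{requires} $D^2u_k\ge -c_0(1-\beta)\eps_k I$ in $B_\rho$, by \eqref{convexity}. This improvement is the content of Step~4 in the paper's proof and is not trivial: one first uses the improved quadratic approximation together with Proposition~\ref{EstimateForDifference} to get $\Dee u_k\ge -\tfrac12 c_0\eps_k$ in a fixed ball $B_{r/8}(\tfrac{r}{2}e_1)$ away from the free boundary (this forces $\delta$, and then $\beta$, to be chosen small and universal, not ``prescribed in advance''), and then compares $w=\Dee u_k+c_0\eps_k$ against a Pucci barrier to propagate a strict lower bound into all of $B_\rho$. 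Without this step the iteration never closes.

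\textbf{Secondary issues.} Your elimination of alternatives (1) and (2) is too thin. In case~(2), monotonicity in a \emph{single} tangential direction $e$ does not produce a cone in $\{u_k=0\}$; you need monotonicity for a full cone of directions around $e$ (the paper obtains this by $C^{1,\alpha}$ continuity), and then $u_k(0)=0$, $u_k\ge 0$ force $\{u_k=0\}$ to contain a solid cone with vertex at $0$, contradicting Proposition~\ref{VanishingThickness}. In case~(1), the paper treats $a_+>0$ and $a_+<0$ separately via explicit sub/supersolution barriers $\Phi,\Psi$ and the convexity Lemma~\ref{Convexity}, again producing a cone in the zero set; your one-line ``one-sided strip of thickness $\asymp\eps_k$'' does not do this. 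Finally, uniform compactness of $\hat u_k$ does not come from Proposition~\ref{C11} (that controls $u_k$, not $\hat u_k$); the paper proves a separate Lipschitz bound (Lemma~\ref{UniformLipschitz}) using the semiconvexity $D^2u_k\ge -c_0\eps_k I$ and \eqref{SuperSolutionToLaplace0}.
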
 

The class of quadratic solutions $\mathcal{Q}$ and the class of well-approximated solution $\SPE$ are defined in Definition \ref{QuadraticSolution} and Definition \ref{ApproxClass}.  

Here and in later parts of the paper, $\lambda_j(M)$ denotes the $j$th largest eigenvalue of the matrix $M$. 

\begin{rem}\label{Remark}The parameter $\kappa$ will be chosen in the final section, depending only on universal constants. After that, all constants in this lemma become universal. \end{rem}

We begin with some preparatory lemmata.

\begin{lem}\label{UniformLipschitz}
Let $u$ and $p$ be as in Lemma \ref{QuadraticApproxTopStratum}.

Then $$|\nabla(u-p)|\le L\eps \text{ in $B_{1/2}$,}$$ where $L$ depends only on universal constants and $\kappa$. 
\end{lem}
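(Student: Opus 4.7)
The strategy is to combine three ingredients: the semiconvexity of $u$, the near one-dimensionality of $p$, and interior $C^{2,\alpha}$ estimates in the positivity set.

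First, diagonalize $A:=D^2 p$. Since $F(A)=1$ and $F$ is uniformly elliptic by \eqref{Ellipticity}, the top eigenvalue $\lambda_1$ satisfies $1/\Lambda\le\lambda_1\le\Lambda$, while $\lambda_2(D^2 p)\le\kappa\eps$ forces the remaining eigenvalues of $A$ to be of order $\eps$. In the eigenbasis, writing $x=(x_1,x')$, one has $p(x)=\tfrac12 \lambda_1 x_1^2+O(\kappa\eps|x'|^2)$. Set $v:=u-p$. Then $|v|\le\eps$ in $B_1$, $v(0)=0$, $\nabla v(0)=0$ (from $0\in\Singu$ together with \eqref{Contact}), and $D^2 v\ge -c_0\eps\, I - A$.

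\emph{Step 1: perpendicular gradient.} For every $e\in\Sph$ with $e\perp e_1$ we have $e\cdot Ae\le\kappa\eps$, hence $D_{ee} v\ge -(c_0+\kappa)\eps$. The restriction of $v$ to any segment in direction $e$ inside $B_1$ is a one-dimensional, $(c_0+\kappa)\eps$-semiconvex function bounded by $\eps$ in absolute value; the elementary one-dimensional estimate (if $\phi''\ge -\delta$ and $|\phi|\le\eps$ on $(-1,1)$ then $|\phi'|\le C(\eps+\delta)$ on $(-1/2,1/2)$) then yields $|D_e v|\le L_0\eps$ in $B_{3/4}$, with $L_0=L_0(\kappa,c_0)$.

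\emph{Step 2: principal direction away from the contact strip.} From $|v|\le\eps$ and $p\ge\tfrac12\lambda_1 x_1^2$, the contact set satisfies $\{u=0\}\subset\{p\le\eps\}\subset\{|x_1|\le\sqrt{2\Lambda\eps}\}$. In $\{u>0\}$, $v$ solves the linear uniformly elliptic equation $G(D^2 v)=0$ with $G(M):=F(M+A)-F(A)$, so Proposition~\ref{EstimateForDifference} (applied locally after scaling) gives $|\nabla v(x_0)|\le C\eps/r(x_0)$, where $r(x_0):=\mathrm{dist}(x_0,\{u=0\})$. This handles $B_{1/2}\cap\{|x_1|\ge 1/4\}$ and yields an $O(\eps)$ control of $\nabla v$ on $\partial B_{3/4}$ outside the thin band $\{|x_1|\lesssim\sqrt\eps\}$.

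\emph{Step 3: principal direction inside the strip.} For $x_0\in B_{1/2}$ with $|x_{0,1}|<1/4$, only $D_1 v$ is not yet controlled by Step~1. The function $h:=D_1 v=D_1 u-\lambda_1 x_1$ is $L_u$-harmonic in $\{u>0\}$ by \eqref{EquationForDerivatives}, equals $-\lambda_1 x_1=O(\sqrt\eps)$ on the free boundary (because $D_1 u=0$ on $\{u=0\}\subset\{|x_1|\lesssim\sqrt\eps\}$), and is $O(\eps)$ on $\partial B_{3/4}$ outside a band of width $\sqrt\eps$ by Step~2. A barrier of the type constructed in Lemma~\ref{BarrierLemma}, tailored to absorb the defects localized on the thin sets $\{u=0\}$ and $\partial B_{3/4}\cap\{|x_1|\lesssim\sqrt\eps\}$, combined with the comparison principle, should then deliver $|h|\le L\eps$ throughout $B_{1/2}$.

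\emph{Main obstacle.} The heart of the matter is Step~3: on both the free boundary and the bad annular band, the a priori defect of $h$ is of order $\sqrt\eps$, not $\eps$. Upgrading to the full $O(\eps)$ bound rests on these defects being confined to regions of transverse thickness $O(\sqrt\eps)$ together with the $\eps$-flatness of $v$ in every direction perpendicular to $e_1$ supplied by Step~1. The scaling match $\sqrt\eps\cdot\sqrt\eps=\eps$ is exactly what the barriers of Section~3 (built around a window of tunable width $\eta$) are designed to encode, which is why those quantitative tools, rather than a soft compactness argument, appear to be needed.
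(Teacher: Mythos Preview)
Your Step~1 is correct and is exactly what the paper does for the tangential directions. The gap is in Step~3, which you yourself flag as incomplete: the barriers of Section~3 control quantities of size $K\eps$ on a strip whose width $\eta$ must be chosen small \emph{depending on} $K$, $\sigma$, $r$; here both the defect and the strip width are $O(\sqrt\eps)$, so one cannot simply invoke Lemma~\ref{BarrierLemma} or Lemma~\ref{Monotonicity1} off the shelf. Making the ``$\sqrt\eps\cdot\sqrt\eps=\eps$'' heuristic rigorous would require a new barrier construction, and you have not supplied one.

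More importantly, you are missing a much simpler observation that renders Steps~2 and~3 unnecessary. Since $F(D^2u)\le 1=F(D^2p)$ everywhere in $B_1$ and $F$ is convex, \eqref{CompareWithLinearizedEquation} gives $L_p(u-p)\le 0$, i.e.\ $L_p\hat u\le 0$ in $B_1$ with $\hat u:=\eps^{-1}(u-p)$. After a linear change of variables making $L_p=\Delta$, your Step~1 bound $D_{ee}\hat u\ge -C$ for all $e\perp e_1$ plugs into $\Delta\hat u\le 0$ to give $D_{11}\hat u\le C$ in $B_1$. Now $\hat u$ is semiconcave in the $e_1$ direction with $|\hat u|\le 1$, and the same one-dimensional lemma you used in Step~1 (applied to $-\hat u$ along $e_1$) yields $|D_1\hat u|\le C$ in $B_{1/2}$. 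This is the paper's argument: it never goes near the free boundary, never uses the $C^{2,\alpha}$ estimate in the positivity set, and never needs barriers.
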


\begin{proof}Define the normalization $$\hat{u}=\frac{1}{\eps}(u-p).$$
Since $F(D^2u)\le 1$ and $F(D^2p)=1$ in $B_1$, by \eqref{CompareWithLinearizedEquation} we have 
 \begin{equation}\label{SuperSolutionToLaplace0}L_p \hat u= \frac 1 \eps L_p(u-p)\le 0 \text{ in $B_1.$}
\end{equation}

Up to a rotation, the polynomial  $p$ is of the form $$p(x)=\frac{1}{2}\sum a_jx_j^2$$ with $a_1\ge a_2\ge\dots\ge a_d\ge 0$ and $a_2\le C\kappa\eps$ for some universal constant $C$. Then $D^2u\ge-c_0\eps \text{ in $B_1$,}$ and $\Dee p\le C\kappa\eps$ for all $e\in\Sph\cap\{x_1=0\},$ gives $$\Dee\hu\ge -C \text{ in $B_1$ for all $e\in\Sph\cap\{x_1=0\}$}$$ for some $C$ depending only on universal constants and $\kappa$. Now the result easily follows from this and the fact that $\hat u \in C^{1,1}$ satisfies \eqref{SuperSolutionToLaplace0}.

Indeed, after a linear deformation, we can assume that $L_p=\triangle$ and the inequality on $D_{ee} \hat u$ is still satisfied after relabeling the constant $C$. Then $\Delta\hat{u}\le 0$ implies that we also have $$D_{11}\hu\le C \text{ in $B_1$.}$$

Together with $|\hu|\le 1$ in $B_1$, these imply $$|\nabla\hu|\le C \text{ in $B_{1/2},$}$$ for some $C$ depending only on universal constants and $\kappa.$ 
\end{proof}  

This lemma provides us with enough compactness for a family of normalized solutions. Actually it even allows us to consider a family of nomalized solutions to the obstacle problem involving a family of different operators. This is necessary to get uniform estimates.

To fix ideas, let $\{F_j\}$ be a sequence of operators satisfying the same assumptions that we have on our operator $F$, namely, \eqref{FirstAssumption}, \eqref{SecondAssumption} and \eqref{Ellipticity}. 

For each $F_j$, there is a unique $\gamma_j$ such that $$F_j(\gamma_je_1\otimes e_1)=1.$$ Ellipticity implies $\gamma_j\in[1/\Lambda,\Lambda].$ Define the associated polynomial \begin{equation}\label{Poly}q_j(x)=\frac{1}{2}\gamma_jx_1^2.\end{equation}

Then we have the following lemma, that identifies the problem solved by the limit of nomalized solutions:

\begin{lem}\label{LimitingProblem}
Let $F_j$ be a sequence of operators satisfying the same assumptions as in \eqref{FirstAssumption}, \eqref{SecondAssumption} and \eqref{Ellipticity}. Let $u_j$ solve the obstacle problem \eqref{OP} with operator $F_j$ in $B_1.$

Suppose for some constant $\kappa>0$ and a sequence $\eps_j\to 0$,  there are polynomials $$p_j(x)=\frac{1}{2}\sum a^j_ix_i^2$$ with $a^j_1\ge a^j_2\ge\dots a^j_d\ge 0,$ $a^j_2\le\kappa\eps_j,$ $$F_j(\sum a^j_ie_i\otimes e_i)=1,$$  and $$|u_j-p_j|\le\eps_j \text{ in $B_1$.}$$

Then up to a subsequence, the normalized solution $$\hat{u}_j=\frac{1}{\eps_j}(u_j-q_j)$$ converges locally uniformly in $B_1$ to some $\hat{u}_\infty$, where $q_j$ is the polynomial as in \eqref{Poly}. 

Moreover, up to scaling, $\hat{u}_\infty$ solves the thin obstacle problem as in \eqref{ThinObstacle}.\end{lem}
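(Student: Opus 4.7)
The goal is to extract a subsequential limit of $\hat u_j$ and identify the PDE it satisfies as the (constant coefficient) thin obstacle problem on $\{x_1 = 0\}$. My plan is to organize the argument into three steps: uniform $C^{0,1}$ bounds on $\hat u_j$; extraction of compactness limits for $\hat u_j$, $F_j$, $\gamma_j$, $a_i^j$; and a viscosity-stability argument that identifies the limiting equation both off and on the thin set.

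\textbf{Step 1: uniform bounds.} First I will compare the polynomials $p_j$ and $q_j$. From $F_j\bigl(\sum a_i^j e_i\otimes e_i\bigr) = 1 = F_j(\gamma_j e_1\otimes e_1)$, splitting off $a_1^j e_1 \otimes e_1$ and using ellipticity \eqref{Ellipticity} with $\sum_{i\ge 2} a_i^j \le (d-1)\kappa\eps_j$, one obtains $|a_1^j - \gamma_j|\le C\kappa\eps_j$. Hence $|p_j - q_j|\le C\kappa\eps_j$ on $B_1$, and combined with $|u_j - p_j|\le \eps_j$ this gives $|\hat u_j|\le C(\kappa)$ on $B_1$. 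Lemma \ref{UniformLipschitz} (whose constant is uniform in $j$ since all $F_j$ share the structural constants) gives $|\nabla(u_j - p_j)|\le L\eps_j$ on $B_{1/2}$, while $\nabla(p_j - q_j)$ is bounded by $C\kappa\eps_j$ on $B_1$; together $\hat u_j$ is uniformly Lipschitz on $B_{1/2}$.

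\textbf{Step 2: compactness.} By Arzel\`a--Ascoli, along a subsequence $\hat u_j \to \hat u_\infty$ locally uniformly in $B_{1/2}$. The uniform $C^{1,\alpha_F}$ bound on $F_j$ with $F_j(0)=0$ gives, up to a further subsequence, $F_j\to F_\infty$ in $C^1_{\mathrm{loc}}(\mathcal S_d)$, with $F_\infty$ convex, uniformly elliptic, and $C^{1,\alpha_F}$. Since $\gamma_j\in[1/\Lambda,\Lambda]$ we may extract $\gamma_j\to\gamma_\infty$, and then $a_1^j\to\gamma_\infty$ by Step~1, while $a_i^j\to 0$ for $i\ge 2$. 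Therefore $D^2 q_j\to M_\infty := \gamma_\infty e_1\otimes e_1$, and the frozen linearizations $L_{q_j}(M) = \sum (F_j)_{ij}(D^2q_j)M_{ij}$ converge to the constant-coefficient elliptic operator $L_\infty(M) = \sum (F_\infty)_{ij}(M_\infty) M_{ij}$.

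\textbf{Step 3: identifying the limit.} On $\{x_1=0\}\cap B_{1/2}$ we have $q_j\equiv 0$, so $\hat u_j = u_j/\eps_j\ge 0$, yielding $\hat u_\infty\ge 0$ on the thin set. Globally, from $F_j(D^2 u_j)\le 1 = F_j(D^2 q_j)$ and \eqref{CompareWithLinearizedEquation}, the viscosity inequality $L_{q_j}\hat u_j\le 0$ holds in $B_{1/2}$, and stability under uniformly convergent linear coefficients passes this to $L_\infty\hat u_\infty\le 0$. To get equality off $\{x_1=0\}$, fix $\delta>0$ and $K_\delta:=\{|x_1|>\delta\}\cap \overline{B_{1/2-\delta}}$: the bound $u_j\ge p_j-\eps_j \ge \tfrac12\gamma_j\delta^2 - C\eps_j$ shows $u_j>0$ on $K_\delta$ for $j$ large, and applying Proposition \ref{EstimateForDifference} on balls inside $\{u_j>0\}$ centered at points of $K_\delta$ gives $\|u_j - q_j\|_{C^{2,\alpha}}\le C(\delta)\eps_j$ locally; dividing by $\eps_j$ this yields $\hat u_j\to\hat u_\infty$ in $C^{2,\alpha}_{\mathrm{loc}}(B_{1/2}\setminus\{x_1=0\})$ with $L_\infty\hat u_\infty = 0$ classically off the thin set. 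Moreover if $\hat u_\infty(x_0)>0$ at some $x_0\in\{x_1=0\}$, then $u_j(x_0) = \eps_j\hat u_\infty(x_0)+o(\eps_j)>0$ forces $u_j>0$ in a neighborhood, upgrading the equation $L_\infty\hat u_\infty = 0$ to this neighborhood. The limit thus solves the thin obstacle problem for $L_\infty$ with thin set $\{x_1=0\}$; a linear change of variables $y = A^{-1/2}x$ (with $A$ the coefficient matrix of $L_\infty$) followed by a rotation in $\{y_1=0\}^\perp$ to realign the hyperplane with $\{y_1=0\}$ reduces this to \eqref{ThinObstacle}, which is the sense in which the conclusion holds ``up to scaling''.

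\textbf{Main obstacle.} The principal technical point is Step~3: passing from the inequality $F_j(D^2u_j)\le 1$, via convexity, to the constant-coefficient limiting inequality $L_\infty\hat u_\infty\le 0$ requires the $C^1$ convergence of $F_j$ and the convergence $D^2q_j\to M_\infty$, while upgrading to equality off the hyperplane requires the interior $C^{2,\alpha}$ estimate of Proposition \ref{EstimateForDifference} on balls of size uniform in $j$, which in turn relies on the geometric fact that $\{u_j=0\}$ concentrates to $\{x_1=0\}$.
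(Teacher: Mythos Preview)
Your proof is correct and follows essentially the same approach as the paper: compactness via Lemma \ref{UniformLipschitz}, linearization of the operators at $D^2 q_j$, and verification of the three thin-obstacle conditions (global supersolution, equation off $\{x_1=0\}$ and where the limit is positive, sign constraint on the thin set). The only technical difference is packaging: the paper introduces the difference-quotient operator $G_j(M)=\eps_j^{-1}\bigl(F_j(\eps_j M + D^2 q_j)-F_j(D^2 q_j)\bigr)$, which satisfies the exact identity $G_j(D^2\hat u_j)=-\eps_j^{-1}\chi_{\{u_j=0\}}$ and thereby yields both the inequality and the equality at once via viscosity stability, whereas you obtain the global inequality from convexity \eqref{CompareWithLinearizedEquation} and the equality off the thin set from the $C^{2,\alpha}$ estimate of Proposition \ref{EstimateForDifference}.
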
 

\begin{proof}
Lemma \ref{UniformLipschitz} gives locally uniform $C^{0,1}$ bound on the family $\{\hat{u}_j\}$. Consequently, up to a subsequence they converge to some $\hat{u}_\infty$ locally uniformly in $B_1$. 

Define the operator $G_j$ by $$G_j(M)=\frac{1}{\eps_j}(F_j(\eps_jM+D^2q_j)-F_j(D^2q_j)).$$ Then $$G_j(D^2\hat{u}_j)=\frac{1}{\eps_j}(\chi_{\{u_j>0\}}-1)=-\frac{1}{\eps_j}\chi_{\{u_j=0\}}.$$

By uniform $C^{1,\alpha_F}$ estimate on the family $\{F_j\}$,  up to a subsequence $G_j$ locally uniformly converges to some linear elliptic operator. Up to a scaling, we assume this limiting operator is the Laplacian. 

Then $G_j(D^2\hat{u}_j)\le 0$ for all $j$ implies $$\Delta\hat{u}_\infty\le 0 \text{ in $B_1$.}$$

If $x_0\in\{\hat{u}_\infty>0\}$, then $\hat{u}_j>0$ in a neighborhood of $x_0$ for large $j$. Note that $q_j\ge0$, thus $u_j>0$ in a neighborhood of $x_0$ for large $j$. Thus $G_j(D^2\hat{u}_j)=0$ in a neighborhood of $x_0$ for all large $j$. Consequently, $\Delta \hat{u}_\infty(x_0)=0.$ That is, 
$$\Delta \hat{u}_\infty=0 \text{ in $\{\hat{u}_\infty>0\}$.}$$

Meanwhile, for $x\in\{x_1\neq 0\}$, $u_j(x)\ge p_j(x)-\eps_j\ge c|x_1|^2-\eps_j$, where $c$ is a universal positive constant. Thus $u_j>0$ in a neighborhood of $x$ for large $j$. Consequently $G_j(D^2\hat{u}_j)=0$ in a neighborhood of $x$ for large $j$. Thus $\Delta\hat{u}_\infty(x)=0.$ That is, $$\Delta\hat{u}_\infty=0 \text{ in $\{x_1\neq 0\}$.}$$

It remains to show that $\hat{u}_\infty\ge 0$ along $\{x_1=0\}$. For this, simply note that $u_j\ge 0$ and $q_j=0$ for all $j$ along $\{x_1=0\}$. \end{proof}

Now we start the proof of Lemma \ref{QuadraticApproxTopStratum}. As explained  at the beginning of this section, the normalized solutions converge to a solution to the thin obstacle problem. The key to the improvement in approximation is the show this limit is $C^2$ at $0$, that is, possibilities (1) and (2) as in Proposition \ref{ExpansionForThinObstacle} cannot happen. 

\begin{proof}[Proof of Lemma \ref{QuadraticApproxTopStratum}]

Let $\bar{r}, \beta\in(0,1)$ be small constants to be chosen, depending only on universal constants and $\kappa$.

Suppose there is no $\bar{\eps}>0$ satisfying the statement of the lemma. For a sequence of $\eps_j\to 0,$ a sequence of operators $F_j$ satisfying the assumptions \eqref{FirstAssumption}, \eqref{SecondAssumption} and \eqref{Ellipticity}, we have a sequence of solutions to \eqref{OP} with these operators such that $$u_j\in\mathcal{S}(p_j,\eps_j,1)$$ for some $p_j\in\Qua$ with $\lambda_2(D^2p_j)\le\kappa\eps_j,$ and $$0\in\Sigma(u_j),$$  but $$u_j\not\in\mathcal{S}(q,(1-\beta)\eps_j,r)$$ for any $q\in\Qua$ and $r\in(\bar{r},1/2).$

Up to a rotation, we assume 

 $$p_j(x)=\frac{1}{2}\sum a^j_ix_i^2$$ with $a^j_1\ge a^j_2\ge\dots a^j_d\ge 0, \text{ and }a^j_2\le\kappa\eps_j.$

 Define $$\hat{u}_j=\frac{1}{\eps_j}(u_j-q_j),$$ where $q_j(x)=\frac{1}{2}\gamma_jx_1^2$ with $F_j(\gamma_je_1\otimes e_1)=1.$ Then up to a scaling, Lemma \ref{LimitingProblem} shows that up to a subsequence, $$\hat{u}_j\to\hat{u}  \text{ locally uniformly in $B_1$,}$$ where $\hat{u}$ solves the thin obstacle problem \eqref{ThinObstacle}.
 
 Moreover, $u_j(0)=0$ for all $j$ implies $\hat{u}(0)=0.$ Lemma \ref{UniformLipschitz} gives a $C_{loc}^{0,1}(B_1)$ bound on $\hat{u}.$  Consequently, Proposition \ref{ExpansionForThinObstacle} is applicable for $\hat{u}$.  
 
We show that possibilities (1) and (2) of Proposition \ref{ExpansionForThinObstacle} cannot happen for $\hat{u}$. 

\

\textit{Step 1: Possibility (1) as in Proposition \ref{ExpansionForThinObstacle} does not happen for $\hat u$.}

Suppose it happens, then we have 
$$\hat{u}=a_+x_1^++a_-x_1^ +  + o(|x|)$$ as $x\to 0.$
First we show that $a_\pm \le 0$. 

Assume that $$a_+ >0,$$ and then we use a barrier to show that $u(0)>0$, contradicting $0\in \Sigma(u).$

For this we choose $r$ small such that 
\begin{equation}\label{hat1}
\hat{u} > \frac 12 d \, \Lambda^2 x_1^2 - \frac 12 |x'|^2,
\end{equation}
near $\partial U_r \cap \{x_1 \ge 0\}$ where $U_r$ is the cylinder of size $r$, 
$$U_r:=B_r' \times [-r,r].$$
This means that $\hat u_j$ satisfies the same inequality \eqref{hat1} above for all $j$ large enough. 
For notational simplicity, we omit the subscript $j$ in the computations below.

Define the barrier function $$\Phi(x_1,x')=\frac{1}{2}(\gamma+\Lambda^2d \eps)(x_1+\eps^2)^2-\frac{1}{2}\eps|x'|^2.$$
and notice that
$$\hat{\Phi}= \frac{1}{\eps}(\Phi-q)=\frac 12 d \, \Lambda^2 x_1^2 - \frac 12 |x'|^2 + O(\eps).$$
We compare $u$ and $\Phi$ on the boundary of the set $$U_r \cap \{x_1 \ge -\eps^2\}.$$ 
On $\{x_1=-\eps^2\}$ we have $ u \ge 0 \ge \Phi$. 
On the remaining part $\partial U_r \cap \{x_1 \ge -\eps^2\}$ we have $u \ge \Phi$ since
$\hat u > \hat \Phi$ for all small $\eps$.
In conclusion, $\Phi\le u$ along the boundary, and Proposition \ref{ComparisonPrinciple} gives  $u\ge\Phi$ in the interior of the domain. 

 In particular $u(0)\ge \Phi(0)>0$, contradicting $0\in\Sigma(u).$   Therefore we have  $a_{\pm}\le 0.$ 

Next we show that $a_\pm$ cannot be negative. Suppose that 
 $$a_+<0,$$ and in this case, we use a barrier to prove that $\{u=0\}$ contains a cone with positive opening and with vertex at $0$. With Proposition \ref{VanishingThickness}, this contradicts $0\in\Sigma(u).$

Since $a_- \le 0$, we can choose $r$ small such that
$$
\hat{u} < \frac{1}{2} a_+ x_1 - \frac 12 d \, \Lambda^2 x_1^2 + \frac 18 r ^2,
$$
near $\partial U_r$. We compare $u$ and $\Phi$ on the boundary of the set $U_r$ where
$$\Psi(x)= \frac{1}{2}\left(\gamma- \Lambda^2(d-1) \eps\right)\left(x_1+ A \eps \right)^2+\frac{1}{2}\eps|x'-\xi'|^2,$$
with $A:=a_+/(2 \gamma)$ and $|\xi'| \le r/2$. Since
$$\hat \Psi= \frac{1}{2} a_+ x_1 - \frac 12 (d-1) \, \Lambda^2 x_1^2 + \frac 12 |x'-\xi'|^2 + O(\eps),$$
we find that $\hat \Psi > \hat u$, hence $\Psi>u$ on $\partial U_r$ for all $\eps$ small.

From here Proposition \ref{ComparisonPrinciple} becomes applicable and gives $u\le\Psi$ in $U$. 

In particular this gives $$u(-A \eps,\xi')=0 \text{ for all $|\xi'|\le r/2.$}$$

Now note that with $u\in\SPE$, we have $D^2 u\ge -c_0\eps$ in $B_1$. Also since $e_1$ is the direction corresponding to the largest eigenvalue of $D^2p$, there is a cone of directions around $e_1$, say, $K\subset\Sph$ with a universal positive opening such that $\Dee p> c>0$ for all $e\in K$.  For small $\eps$ we can then apply Lemma \ref{Convexity} to get $$\Dee u\ge 0 \text{ in $B_{1/2}$}$$ for all $e\in K$. 

Together with $u(0)=0$ and $u(-A\eps, \xi')=0$ for all $|\xi'|\le r/2$, this implies that the coincidence set $\{u=0\}$ contains a cone of positive opening with vertex at $0$, contradicting Proposition \ref{VanishingThickness}.

This finishes the proof of \textit{Step 1}.

\

\textit{Step 2: Possibility (2) in Proposition \ref{ExpansionForThinObstacle} does not happen for $\hat u$.}

Suppose it happens, then for some $r>0$ and $\nu\in\Sph\cap\{x_1=0\}$,  we have $$D_\nu\hat{u}> 0 \text{ in $B_r\cap\{x_1\neq0\}$.}$$

Therefore, there is some $\sigma>0$ such that $$D_\nu\hat{u}\ge4\sigma \text{ in $B_{r}\cap\{|x_1|\ge\frac{1}{16\sqrt{\Lambda}}r\}$.}$$
Note that $|u-p_j|\le\eps_j$ in $B_1$ implies $\{u_j=0\}\subset\{|x_1|\le C\sqrt{\eps_j}\}$ for some universal $C$, we have $\{u_j>0\}\cap B_1\to B_1\backslash\{x_1=0\}$. 

Meanwhile, $\hat{u}_j\to\hat{u}$ locally uniformly in $B_1$ with $G_j(D^2\hat{u}_j)=0$ in $\{u_j>0\}$, where $$G_j(M)=\frac{1}{\eps_j}(F_j(\eps_j M+D^2q_j)-1).$$Consequently $\hat{u}_j\to\hat{u}$ in $C^{1,\alpha}_{loc}(B_1\backslash\{x_1=0\})$. Therefore, for large $j$, $$D_\nu\hat{u}_j\ge2\sigma \text{ in $B_{r}\cap\{|x_1|\ge\frac{1}{16\sqrt{\Lambda}}r\}$}.$$ That is, 
$$D_\nu u_j\ge2\sigma\eps_j  \text{ in $B_{r}\cap\{|x_1|\ge\frac{1}{16\sqrt{\Lambda}}r\}$}.$$
 
With $|u_j-q_j|\le C\eps_j$, in $\{u_j>\frac{1}{256}r^2\}$, we have 
\begin{equation*}
\frac{1}{2}\gamma x_1^2\ge \frac{1}{256}r^2-C\eps_j\ge \frac{1}{512}r^2
\end{equation*} for $j$ large. 
Consequently, $$\{u_j>\frac{1}{256}r^2\}\subset\{|x_1|\ge\frac{1}{16\sqrt{\Lambda}}r\}.$$ Thus, we have established 
$$D_\nu u_j\ge 2\sigma\eps_j  \text{ in $B_{r}\cap\{u_j>\frac{1}{256}r^2\}$}.$$

Also, by Lemma \ref{UniformLipschitz}, we have $D_\nu u_j\ge-L\eps_j$ in $B_r$.

Now take $\eta$ depending on $K=L$,  $r$ and $\sigma$ as in Lemma \ref{Monotonicity2}. 

Note the convergence of  $\hat{u}_j\to\hat{u}$ in $C^{1,\alpha}_{loc}(B_1\backslash\{x_1=0\})$  implies $$D_\nu u_j>0 \text{ in $B_r\cap\{|x_1|>\frac{1}{2}\eta\}$}$$ for large $j$. 

By the $C^{1,\alpha}$-regularity of $u_j$, there is a cone of directions $\hat{K}\subset\Sph$ around $\nu$ with positive opening such that for all $e\in\hat{K}$, we have $$D_e u_j\ge -K\eps_j \text{ in $B_r$,}$$$$D_e u_j\ge\sigma\eps_j \text{ in $B_r\cap\{u_j>\frac{1}{256}r^2\}$,}$$and $$D_e u_j>0 \text{ in $B_r\cap\{|x_1|>\eta\}.$}$$

Thus Lemma \ref{Monotonicity2} applies and gives $$D_eu_j\ge 0 \text{ in $B_{r/2}$ for all $e\in\hat{K}$}.$$

With $u_j(0)=0$, $u_j\ge 0$, this implies that $\{u_j=0\}$ contains a cone in $B_{r/4}$ in direction $-\hat{K}$, again contradicting Proposition \ref{VanishingThickness}. 

\

\textit{Step 3: Improved quadratic approximation, i.e. we show that
$$\mbox{$ \forall \delta>0$, \quad $ \exists r>0$, \, \,  $p_j' \in \mathcal Q$ such that $|u_j-p_j'| \le 3 \delta \eps_j \, r^2$ in $B_r$.} $$}
After the previous two steps, we have that the limiting profile $\hat{u}$ falls into  possibility (3) as in Proposition \ref{ExpansionForThinObstacle}. Consequently, for some $\delta>0$ to be chosen later, there is $r>0$ such that  $$|\hat{u}-\frac{1}{2}x\cdot Ax|<\delta r^2 \text{ in $B_r$,}$$
where $trace(A)=0$, and $e\cdot Ae\ge 0$ for all $e\in\Sph\cap\{x_1=0\}.$

Locally uniform convergence of $\hat{u}_j\to\hat{u}$ gives for large $j$ \begin{equation*}
|u_j-q_j-\eps_j\frac{1}{2}x\cdot Ax|<2\eps_j\delta r^2 \text{ in $B_r$.}
\end{equation*} 
Here  we omit the index $j$ for the sake of simplicity, then we have
 \begin{equation}\label{QuadApprox2}
|u-q-\eps\frac{1}{2}x\cdot Ax|<2\delta\eps r^2 \text{ in $B_r$.}
\end{equation}  

With Cauchy-Schwarz inequality and $e\cdot Ae\ge 0$ for all $e\in\Sph\cap\{x_1=0\},$ we see that there is a constant $C$, depending on $|A|$, such that 
 \begin{equation}\label{66}D^2q+A\eps+C\eps^2 \ge \frac{1}{4}\gamma e_1\otimes e_1\end{equation} for $\eps$ small.

Now note that we are assuming, after necessary scaling, that $F_{ij}(D^2q)=\delta_{ij}$, where $F_{ij}$ is the derivative of $F$ in the $(i,j)$-entry. Thus $trace(A)=0$ implies $$|F(D^2q+ A\eps+C\eps^2I)-1|\le C\eps^{1+\alpha_F}$$ by assumption \eqref{SecondAssumption}.

Consequently, there is $t\in [-C,C]$ such that the polynomial $$p'(x)=q+\frac{1}{2}\eps x\cdot Ax+\frac{1}{2}C\eps^2|x|^2+\frac{1}{2}t\eps^{1+\alpha_F}x_1^2$$ solves $$F(D^2p')=1.$$ Meanwhile, by \eqref{66} we have  
\begin{align*}D^2p'&=D^2q+A\eps+C\eps^2I+t\eps^{1+\alpha_F}e_1\otimes e_1\\&\ge\frac{1}{4}\gamma e_1\otimes e_1-C\eps^{1+\alpha_F}e_1\otimes e_1\\&\ge 0
\end{align*}for $\eps$ small. Thus $D^2p'\ge 0$ and $p'\in\Qua.$

Finally \eqref{QuadApprox2} implies that in $B_r$, 
\begin{align*}|u-p'|&\le |u-q-\eps\frac{1}{2}x\cdot Ax|+ C\eps^2 r^2+\frac{1}{2}|t|\eps^{1+\alpha_F}x_1^2\\&\le 2\delta\eps r^2+C\eps^{1+\alpha_F}r^2
\\ &\le 3\delta\eps r^2,
\end{align*}for all $\eps$ small.

\

\textit{Step 4: Improved convexity, i.e. we show that 
$$ u \in \mathcal{S}(p',(1-\beta)\eps,r),$$
with $\beta>0$ depending on $\kappa$ and universal constants.}

It remains to show that (see \eqref{convexity}) $$D^2u\ge-c_0(1-\beta)\eps \, I \text{ in $B_r$,}$$ where $c_0$ is the constant as in Definition \ref{ApproxClass}.

For fixed $e\in\Sph$, define $$w=\Dee u+c_0\eps.$$ Then $u\in\SPE$ implies $w\ge 0$ in $B_1$.

Now for small $\eps$, $u>0$ in $B_{\frac{1}{4}r}(\frac{1}{2}re_1)$. Thus Proposition \ref{EstimateForDifference} implies $$|\Dee u-\Dee p'|\le C\delta\eps r^2 \quad \text{in $B_{\frac{1}{8}r}(\frac{1}{2}re_1)$}$$ for a universal $C$. 

Now fix $\delta$, depending on universal constants and $\kappa$, such that the right-hand side is less than $\frac{1}{2}c_0\eps$. Then 
$$|u-p'|<\frac{1}{2}\eps r^2 \text{ in $B_r$}$$ and $$\Dee u\ge-\frac{1}{2}c_0\eps \text{ in $B_{\frac{1}{8}r}(\frac{1}{2}re_1)$}.$$

In particular \begin{equation}\label{2}w\ge\frac{1}{2}c_0\eps \text{ in $B_{\frac{1}{8}r}(\frac{1}{2}re_1)$}.\end{equation}

If we solve $$\begin{cases}\mathcal{M}^+_\Lambda(\Phi)=0 &\text{ in $B_1\backslash B_{\frac{1}{8}r}(\frac{1}{2}re_1),$}\\ \Phi=0 &\text{ along $\partial B_1$,}\\ \Phi=\frac{1}{2}c_0 &\text{ along $ B_{\frac{1}{8}r}(\frac{1}{2}re_1)$,}\end{cases}$$where $\mathcal{M}^+_\Lambda$ is the maximal Pucci operator \cite{CC}, then $$w\ge\eps\Phi \text{ along $\partial B_1\cup\partial B_{\frac{1}{8}r}(\frac{1}{2}re_)$}.$$

Meanwhile, along $\partial\PosS$, \begin{equation}\label{3}w\ge c_0\eps\end{equation} by \eqref{Contact}. Thus $$w\ge\eps\Phi \text{ along $\partial\PosS.$}$$

In conclusion, $$w\ge\eps\Phi \text{ along  $\partial (\PosS\cap B_1\backslash B_{ \frac{1}{8}r}(\frac{1}{2}re_1))$}.$$
With $L_u(w)\le 0$ in $\PosS$, comparison principle gives  $$w\ge\eps\Phi \quad \text{ in $\PosS\cap B_1\backslash B_{ \frac{1}{8}r}(\frac{1}{2}re_1))$.}$$ Together with \eqref{2} and \eqref{3}, this implies   $$w\ge\eps\Phi \text{ in $B_1$.}$$

Meanwhile, there is a constant $\beta'\in(0,1)$, depending on universal constants and $\kappa$,  such that $$\Phi\ge\frac{1}{2}\beta' c_0 \quad \text{in $B_r$}.$$Thus $$D_{ee}u= w-c_0\eps\ge c_0\eps(-1+\frac{1}{2}\beta') \text{ in $B_r$.}$$ Define $\beta=\frac{1}{2}\beta'$, then in $B_{r}$, we have $$D^2u\ge -(1-\beta)c_0\eps \, I,$$ and $$|u-p'|\le(1-\beta)\eps r^2.$$ That is, $$u\in\mathcal{S}(p,(1-\beta)\eps, r)$$ for $p'\in\Qua$. 

This contradicts our construction of $u$ at the beginning of this proof. 
\end{proof} 

%%%%%%%%%%%%%%%%%%%%%%%%%%%%%%%%%%%%%%%%%%%%%%%%%%%%%%%%%%%%%%%%%%%%%%%%%%%%%%%%%%%%%%%%%%%%%%%%%%%%%%%%%%%%%%%%%%%%%%%%%%%%%%%%%%%%%%%%%%%%%%%%%%%%%%%%%%%%
\section{Quadratic approximation of solution: Case 2}
In this section, we prove a version of Lemma \ref{QuadraticApproxTopStratum} but for $u\in\SPE$ where $\lambda_2(D^2p)\gg\eps.$ 
Here the situation is different since the zero set $\{u=0\}$ concentrates around subspaces of codimension at least 2, say
\begin{equation}\label{n-k}
\{x'=0 \in \R^k\},  \quad k \ge 2, \quad \mbox{where} \quad x':=(x_1,..,x_k), \quad x'':=(x_{k+1},...,x_d).
\end{equation} This brings technical challenges as the normalized solution $\hat u=\frac 1 \eps (u-p)$ now solves an obstacle problem with an obstacle $\hat O=-\frac 1 \eps p$ whose capacity converges to $0$ as $\eps \to 0$. 

We define $h$ to be the solution to the unconstrained problem
\begin{equation}\label{hdef}
\begin{cases}
F(D^2h)=1 &\text{ in $B_1$,}\\ h=u &\text{ on $\partial B_1$.}
\end{cases}
\end{equation}
We will show that $\hat u$ is well approximated in $L^\infty$ by the corresponding function $\hat h$,
$$\hat h := \frac 1 \eps (h-p),$$
but only away from a tubular neighborhood around the $(d-k)$-dimensional subspace above (see Lemma \ref{OutsideCylinder}). Inside this neighborhood, the difference between $\hat h$ and $\hat u$ could be of order 1, and $\hat u$ has no longer a uniform modulus of continuity (as $\eps \to 0$) in $B_{1/2}$ as in the codimension 1 case. 

Heuristically, as $\eps \to 0$, we end up with limiting functions $\bar u$, $\bar O$ and $\bar h$, that satisfy that $|\bar h|$, $|\bar u|$ and $\max \bar O$ are all bounded by 1 in $B_1$, and

\noindent 1) $\bar h$ is a solution to a constant coefficient elliptic equation, 

\noindent 2) the obstacle $\bar O$ is a concave quadratic polynomial supported on the $x''$-subspace, extended to $-\infty$ outside its support, 

\noindent 3) $\bar u=\max\{\bar h, \bar O \}$, which can be discontinuous.

The improved quadratic error for $\hat u$ cannot be deduced right away from the $C^{2,\alpha}$ estimate of $\bar h$ at the origin. This will follow after we show that $0 \in \Sigma(u)$ essentially implies that $\bar h$ and $\bar O$ are tangent of order 1 at the origin in the $x''$ direction and $\bar O$ can only separate on top of $\bar h$ in this direction by a small quadratic amount. 

It turns out that the improvement in convexity and approximation is much slower. Instead of $\eps\to(1-\beta)\eps$ as in Lemma \ref{QuadraticApproxTopStratum}, we only have an improvement of the form $\eps\to(\eps-\eps^\mu)$, where $\mu>1,$ universal. This is consistent with $C^{1,\log^{\eps}}$-regularity of covering for lower strata in the classical obstacle problem \cite{CSV}. 

This slow rate of improvement could a priori break the convergence of the polynomials $p_k$ and the uniqueness of the blow-up profile, as well as the iteration scheme. Suppose $p_k$ is the approximating quadratic polynomial in the $k$th iteration. Then a rate of $\eps\to(1-\beta)\eps$ implies $$|D^2p_{k+1}-D^2p_{k}|\le C(1-\beta)^k\eps_0.$$ The summability of this sequence implies the convergence of $D^2p_k.$ When the rate is $\eps\to(\eps-\eps^\mu)$, this is not true anymore. 

In the next section, we establish the convergence of  $D^2p_k$  by working instead with the corresponding approximations $D^2h_k(0)$. These are not necessarily positive definite, but still approximate $u$ quadratically with error proportional to $\eps_k$. The main point is that the series
\begin{equation}\label{sumh}
\sum |D^2 h_k(0)-D^2 h_{k+1}(0)|
\end{equation}
is convergent, which is a consequence of the main result of this section, Lemma \ref{QuadraticApproxLowerStratum} below. This lemma provides a dichotomy concerning the rate of the quadratic improvement between two consecutive balls.
Essentially it says that either we have a fast improvement as in Lemma \ref{QuadraticApproxTopStratum}, or the difference between consecutive errors $\eps_k$ is bounded below by the difference between $u$ and $h$ at some point away from the $x''$ subspace, which could be as small as $\eps^\mu$. 

We recall that by Definition \ref{ApproxClass}, $u\in\SPE$ means that $u$ solves \eqref{OP}, and
$$|u-p|\le\eps r^2 \quad \mbox{and} \quad D^2u\ge-c_0\eps I \quad \mbox{in $B_r$.}$$ 
Here $c_0=1/(16 \Lambda^2)$ and $p \in \UQua$ (see Definition \ref{QuadraticSolution}), which means that $p$ is a convex quadratic polynomial that satisfies $p(0)=0$, $F(D^2p)=1$.

\begin{lem}[Quadratic approximation of solution: Case 2]\label{QuadraticApproxLowerStratum}
Suppose $u\in\SPE$ with $0\in\Singu$ and $p\in\UQua.$ 
There are universal constants $\kappa_0$ large, $\bar{\eps}$ small, and $\rho\in(0,1/2)$ such that if $\eps<\bar{\eps}$  and  $$\lambda_2(D^2p)\ge\kappa_0\eps,$$ then $$u\in \mathcal{S}(p',\eps',\rho)$$ for some $p'\in\UQua,$ and one of two alternatives happens for $\eps'$:
\begin{enumerate}
\item{$$\eps'\le(1-\beta)\eps \quad \mbox{ for a universal $\beta\in(0,1)$; or}$$}
\item{$$\eps'\le\eps-\eps^\mu \quad \mbox{ and} \quad  (u-h)(\frac{1}{2}\rho e_1)\le C(\eps-\eps'),$$ 
for some universal constants $\mu, C>1,$ where $h$ is the solution  to \eqref{hdef}.}
\end{enumerate}
\end{lem}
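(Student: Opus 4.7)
The plan is to follow the compactness-contradiction scheme of Lemma \ref{QuadraticApproxTopStratum}, but comparing $u$ with the unconstrained solution $h$ of \eqref{hdef} in place of the thin-obstacle limit. The convexity relation \eqref{CompareWithLinearizedEquation} gives $L_h(u-h)\le F(D^2u)-F(D^2h)\le 0$ in $B_1$, and since $u=h$ on $\partial B_1$, the maximum principle yields $u\ge h$ in $B_1$; applying the same idea to $h-p$ (both solving $F(D^2\cdot)=1$) gives $|h-p|\le \eps$ in $B_1$. Proposition \ref{EstimateForDifference} applied to $G(M)=F(M+D^2p)-1$ then yields a universal $C^{2,\alpha}$ estimate for $\hat h:=\eps^{-1}(h-p)$ and a Taylor expansion
\begin{equation*}
\hat h(x) = \hat h(0) + \nabla\hat h(0)\cdot x + \tfrac12 x\cdot D^2\hat h(0)\,x + R(x),\qquad |R(x)|\le C|x|^{2+\alpha}.
\end{equation*}

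The natural candidate new polynomial is
\begin{equation*}
p'(x) := p(x) + \eps\nabla\hat h(0)\cdot x + \tfrac12\eps\, x\cdot D^2\hat h(0)\, x + \tfrac12 t\,\eps^{1+\alpha_F}x_1^2,
\end{equation*}
where the last quadratic correction (with $t$ universally bounded) enforces $F(D^2p')=1$ via the $C^{1,\alpha_F}$ regularity \eqref{SecondAssumption}. The requirement $D^2p'\ge 0$, needed for $p'\in\UQua$, is preserved when $\kappa_0$ is universally large: the bound $\lambda_k(D^2p)\ge \kappa_0\eps$ absorbs the $O(\eps)$ perturbation in the $x'$-directions, while the sign of $D^2\hat h(0)$ restricted to the $x''$-subspace will be controlled by the singular-point analysis below. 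Combining $u\ge h$ with the Taylor expansion, in $B_\rho$ one has
\begin{equation*}
0\le (u-p')(x) \le (u-h)(x) + \eps|\hat h(0)| + C\eps\rho^{2+\alpha},
\end{equation*}
so the achievable $\eps'$ is driven by the sizes of $\eps|\hat h(0)|$ and $\sup_{B_\rho}(u-h)$.

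A central step uses $0\in\Singu$ to establish quantitative tangency between $\hat h$ and the obstacle $-p/\eps$ at the origin in the $x''$-directions: namely $|\hat h(0)|+|\nabla_{x''}\hat h(0)|\le C\eps^{\mu_0}$ and $\langle D^2\hat h(0)\tau,\tau\rangle\ge -C\eps^{\mu_0}$ for all unit $\tau$ in the $x''$-subspace, for some universal $\mu_0>0$. If any of these failed by an order-one amount, the resulting shape of $h$ combined with $u\ge h$ and the barriers of Section 3 would force a portion of $\{u=0\}$ near $0$ to have positive thickness, contradicting Proposition \ref{VanishingThickness}. This bound makes the $\eps|\hat h(0)|$ term of order $\eps^{1+\mu_0}$ and hence harmless.

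The final dichotomy is on the $(u-h)$ term. If $\sup_{B_\rho}(u-h)\le\beta\eps\rho^2$ for a small universal $\beta$ with $\rho$ chosen so $C\rho^\alpha\le \beta/2$, one obtains alternative (1) with $\eps'\le(1-\beta)\eps$; the convexity improvement $D^2u\ge-c_0\eps'I$ in $B_\rho$ follows from the Pucci-barrier argument of Step 4 of the proof of Lemma \ref{QuadraticApproxTopStratum}. Otherwise, Harnack's inequality for the nonnegative function $u-h$, which solves a linear elliptic equation in $\{u>0\}$, transfers $\sup_{B_\rho}(u-h)$ to a comparable value at the point $\tfrac12\rho e_1$ (which lies deep inside $\{u>0\}$ with $u\ge c\eps\rho^2$ since $\lambda_1(D^2p)\ge \kappa_0\eps$), giving alternative (2) with $\mu:=1+\mu_0>1$, the $\eps^\mu$ floor on the improvement coming from the tangency estimate above. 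The main obstacle will be precisely this quantitative tangency $|\hat h(0)|\le C\eps^{\mu_0}$: the obstacle $-p/\eps$ has vanishing capacity on a codimension-$k$ subspace, so producing a quantitative rate requires barriers adapted to this higher-codimension setting, combined delicately with the vanishing-thickness estimate of Proposition \ref{VanishingThickness}.
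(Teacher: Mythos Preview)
Your proposal has the right overall architecture---compare $u$ with the unconstrained solution $h$, expand $\hat h$ at the origin, and read off the dichotomy---but several of the key quantitative claims are either unjustified or false, and one essential step is missing.

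\textbf{The tangency estimate is too strong.} You claim $|\hat h(0)|+|\nabla_{x''}\hat h(0)|\le C\eps^{\mu_0}$ and $\langle D^2\hat h(0)\tau,\tau\rangle\ge -C\eps^{\mu_0}$ from the vanishing-thickness argument. The paper's Steps 1--2 only yield $|\nabla_{x''}(\hat h-\hat O)(0)|<\delta$ and $|\hat h(\underline x',0)-\hat O(\underline x',0)|<\delta$ for a small \emph{universal} constant $\delta$, with no rate in $\eps$; Proposition \ref{VanishingThickness} gives only a modulus of continuity, not a power. More importantly, the second-derivative bound $D^2_{x''}\hat h(0)\ge -C\eps^{\mu_0}$ is in general \emph{false}: precisely its failure (by an amount of order $c_0$) is what triggers alternative (2) in the paper's Steps 5--6.

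\textbf{The bound on $u-h$ in $B_\rho$ is missing.} Your displayed inequality $0\le (u-p')\le (u-h)+\eps|\hat h(0)|+C\eps\rho^{2+\alpha}$ is useless unless you first show $u-h$ is small in $B_\rho$, since a priori $u-h$ can be as large as $2\eps$ near the contact set. The paper handles this in Step 3 by a separate barrier argument, proving $\hat u\le \hat h + c_0|x''|^2 + 4\delta$ in $B_{1/4}$; this is what makes $|u-p'|\le \tfrac12 \eps\rho^2$ hold \emph{unconditionally}, so that the dichotomy concerns only the convexity improvement, not the approximation error. In your scheme, in the ``otherwise'' branch where $\sup_{B_\rho}(u-h)$ is large, you cannot achieve $|u-p'|\le \eps'\rho^2$ with $\eps'<\eps$.

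\textbf{The mechanism for alternative (2) is different.} The $\eps^\mu$ floor does not come from a tangency rate. In the paper, when $D_{\xi\xi}(\hat h-\hat O)(0)<-\tfrac18 c_0$ for some $\xi$ in the $x''$-subspace, one finds a point $x^*$ where $h<-c\eps$, hence $u-h\ge c\eps$ in $B_{c'\eps}(x^*)$; Harnack for the supersolution $u-h$ (of $L_h$) in the annulus $B_1\setminus B_{c'\eps}(x^*)$ then gives $u-h\ge \eps^\mu$ in $B_{1/2}$, with $\mu$ coming from the $\eps$-scale of the inner ball. The convexity improvement is obtained by comparing the supersolution $w=D_{ee}u+c_0\eps$ with the subsolution $\tfrac{c_0}{2}(u-h)$ for $L_u$ on $\partial(B_1\cap\{u>0\})$, which yields $D_{ee}u\ge -c_0\eps + c(u-h)(\tfrac12\rho e_1)$ and defines $\eps'$. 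Your appeal to the Pucci-barrier argument of Step 4 of Lemma \ref{QuadraticApproxTopStratum} works only for alternative (1).
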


The dichotomy is dictated by the behavior of the matrix $D^2 h(0)$ along the $x''$ subspace. If $D^2_{x''}h(0) \ge -\frac{c_0}{8} \eps \, I,$ then we end up in alternative (1), otherwise we end up in (2).

Suppose that we are in the slow improvement situation (2). Let $h'$ denote the solution to \eqref{hdef} in the ball $B_\rho$. By maximum principle, we have $u \ge h' \ge h$ in the common domain. The Harnack inequality for the difference $h'-h$ and Proposition \ref{EstimateForDifference} imply
\begin{align*}
|D^2h'(0)-D^2 h(0)| & \le C_\rho \|h'-h\|_{L^\infty} \\
& \le C (h'-h)(\frac{1}{2}\rho e_1) \le C (u-h)(\frac{1}{2}\rho e_1)\le C_1(\eps-\eps') ,
\end{align*}
for some universal $C_1$.  Iteratively,  the series in \eqref{sumh} is  bounded from above by a telescoping sum. Thus its convergence is justified.

Recall that $\lambda_2(M)$ denotes the second largest eigenvalue of a matrix $M$. A technical point is that we are working in the class of quadratic polynomials $p \in \UQua$ defined in Definition \ref{QuadraticSolution} which have a linear part as well.

Up to a rotation, $p$ takes the form $$p(x)=\frac{1}{2}\sum a_jx_j^2+\sum b_jx_j$$ with $a_1\ge a_2\ge\dots\ge a_d\ge 0$, $a_2\ge\kappa_0\eps$ and $F(\sum a_j e_j\otimes e_j)=1.$ Throughout this section we assume that $p$ is of this form. 

Then by $p\ge u-\eps\ge-\eps$ in $B_1$ we have $$\frac{1}{2}a_jx_j^2+b_jx_j\ge -\eps$$ for $x_j\in[-1,1]$ and $1\le j\le d.$  If $a_j\ge 2\eps$, we have \begin{equation}\label{bAnda}|b_j|\le \sqrt{2a_j\eps}.\end{equation}

For a positive constant $\eta$, we define the following cylinder $$\mathcal{C}_\eta=\{|(x_1,x_2)|\le\eta\}.$$ We first show that $u$ is well approximated by $h$ outside this cylinder. 

\begin{lem}\label{OutsideCylinder}
Let $u, p, h$ be as in  Lemma \ref{QuadraticApproxLowerStratum}.

Given $\eta$ small, there is $\kappa_\eta$, depending on universal constants and $\eta$, such that if $a_2\ge\kappa_\eta\eps$, then $$\|u-h\|_{C^2(B_{1/2}\backslash\mathcal{C}_{\eta})}\le\eta\eps$$ for all $\eps$ small, depending on $\eta.$ 
\end{lem}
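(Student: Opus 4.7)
The plan is a compactness-and-contradiction argument, preceded by a reduction showing that the contact set $\{u=0\}$ lies strictly inside $\mathcal{C}_\eta$ once $\kappa_\eta$ is taken large enough.

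For the reduction, I would use the hypothesis $a_2 \ge \kappa_\eta \eps$ together with the nondegeneracy estimate \eqref{bAnda}. Writing $\tilde x_j = x_j + b_j/a_j$ (an $O(1/\sqrt{\kappa_\eta})$ shift, since $|b_j| \le \sqrt{2 a_j \eps}$ for $j=1,2$), and using that each $\tfrac{1}{2}a_j x_j^2 + b_j x_j \ge -\eps$ on the $x_j$-axis, one completes the square to obtain, for $x \in B_1 \setminus \mathcal{C}_{\eta/2}$,
\[
p(x) \;\ge\; \tfrac{1}{2}\bigl(a_1 \tilde x_1^{\,2} + a_2 \tilde x_2^{\,2}\bigr) - d\,\eps \;\ge\; c\,\kappa_\eta\,\eta^2\,\eps - d\,\eps,
\]
with $c$ universal. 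Taking $\kappa_\eta$ larger than a universal multiple of $1/\eta^2$ forces $u \ge p - \eps > 0$ throughout $B_1 \setminus \mathcal{C}_{\eta/2}$, so both $u$ and $h$ solve $F(D^2\cdot) = 1$ there. A standard comparison via \eqref{CompareWithLinearizedEquation}, applied to $h-p$ with boundary data within $\eps$ of zero, also gives $|h-p| \le \eps$, and hence the global rough bound $|u-h| \le 2\eps$ on all of $B_1$.

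For the improvement outside $\mathcal{C}_\eta$, I would argue by contradiction: suppose for some fixed $\eta > 0$ there exist sequences $\eps_j \to 0$, $\kappa_j \to \infty$, and solutions $u_j, p_j, h_j$ with $a_2^{(j)} \ge \kappa_j \eps_j$ violating the $C^2$-estimate. Set $w_j := (u_j - h_j)/\eps_j$. By the reduction above the contact set is contained in $\mathcal{C}_{\delta_j}$ with $\delta_j = O(1/\sqrt{\kappa_j}) \to 0$, and on $B_1 \setminus \mathcal{C}_{\delta_j}$ the function $w_j$ satisfies the uniformly elliptic linear equation
\[
a_j^{ij}(x)\, \partial_{ij} w_j = 0, \qquad a_j^{ij}(x) := \int_0^1 F_{ij}\bigl(t D^2 u_j + (1-t) D^2 h_j\bigr)\, dt.
\]
Proposition \ref{EstimateForDifference} applied to $u_j - p_j$ and $h_j - p_j$ provides $\|D^2 u_j - D^2 p_j\|_{C^\alpha}, \|D^2 h_j - D^2 p_j\|_{C^\alpha} \to 0$ on every fixed set $B_1 \setminus \mathcal{C}_{\eta'}$; the H\"older continuity of $F_{ij}$ then makes $a_j^{ij}$ converge locally uniformly to the constant matrix $F_{ij}(A_\infty)$, where $A_\infty := \lim D^2 p_j$ along a subsequence. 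Schauder estimates yield a $C^2_{\mathrm{loc}}$-subsequential limit $w$ on $B_1 \setminus \{x_1 = x_2 = 0\}$ with $|w| \le 2$, vanishing boundary values on $\partial B_1$ (by boundary H\"older regularity, since $w_j=0$ on $\partial B_1$), and solving $F_{ij}(A_\infty)\, \partial_{ij} w = 0$.

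The conclusion then follows from the classical removability of codimension-$2$ singular sets for bounded solutions of constant-coefficient uniformly elliptic equations: after a linear change of coordinates the equation becomes Laplace's, across which a bounded harmonic function extends. Therefore $w$ is a classical solution on all of $B_1$ with vanishing boundary data, so $w \equiv 0$, contradicting the assumed lower bound on $\|w_j\|_{C^2(B_{1/2} \setminus \mathcal{C}_\eta)}$ in the limit. I expect the main obstacle to be this final limit identification: one must simultaneously shrink the excluded cylinder $\mathcal{C}_{\delta_j}$ to a codimension-$2$ axis, extract subsequential convergence of $D^2 p_j \to A_\infty$, and verify that the linearized coefficients $a_j^{ij}$ tend to a constant-coefficient limit, relying on the hypothesis $F \in C^{1,\alpha_F}$ in an essential way so that the removable-singularity theorem is applicable to the limit equation.
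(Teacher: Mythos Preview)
Your argument is correct and reaches the same conclusion, but by a genuinely different route than the paper.

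The paper proceeds by a direct barrier construction rather than compactness. After the same reduction confining $\{u=0\}$ to a thin cylinder $\mathcal{C}_{\eta'}$ with $\eta'\ll\eta$, it solves two auxiliary Dirichlet problems in $B_1\setminus\mathcal{C}_{2\eta'}$: one for the constant-coefficient operator $L_p$ and one for $L_u$, both with boundary data equal to a cutoff that is $2$ on $\partial\mathcal{C}_{2\eta'}$ and $0$ on $\partial B_1$. The $L_p$-solution $v$ is small (of size $\omega(\eta')$) away from $\mathcal{C}_{\eta/2}$ because a constant-coefficient harmonic function with boundary support on a thin codimension-two tube is small; the $L_u$-solution $\tilde v$ is then close to $v$ by Proposition~\ref{v-w}. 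Since $L_u(u-h)\ge 0$ in $B_1\setminus\mathcal{C}_{2\eta'}$ with $u-h=0$ on $\partial B_1$ and $u-h\le\eps$ everywhere, maximum principle gives $0\le u-h\le\eps\tilde v\le\eps(\omega(\eta')+\tilde\omega(\eps))$ in $B_{3/4}\setminus\mathcal{C}_{\eta/2}$, and Proposition~\ref{EstimateForDifference} upgrades this to the $C^2$ bound. One then chooses $\eta'$ first, $\eps$ second.

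Your compactness proof invokes the same underlying analytic fact---that a $(d-2)$-plane has zero elliptic capacity---but at the level of the limit equation, via the removable-singularity theorem for bounded solutions. This is arguably cleaner once that lemma is quoted, and avoids the two-scale $(\eta',\eta)$ bookkeeping. The paper's argument, in exchange, is more self-contained (the capacity smallness is absorbed into the modulus $\omega(\eta')$ without ever passing to a limit) and sidesteps the boundary-trace issue for the normalized sequence $w_j$. On that last point, you should make explicit that near any $y_0\in\partial B_1\setminus\{x_1=x_2=0\}$ the functions $w_j$ solve a uniformly elliptic equation in $B_1$ for $j$ large (since the cylinder has retreated), so Krylov--Safonov boundary H\"older regularity gives a uniform modulus; hence the extended harmonic limit $w$ vanishes on $\partial B_1$ off a set of zero surface measure, and $w\equiv 0$ follows.
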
 

\begin{proof}
Let $0<\eta'\ll\eta$ to be chosen, depending on $\eta$. There is $\kappa_{\eta'}$ large such that $|u-p|<\eps$ and $a_2\ge\kappa_{\eta'}\eps$ imply $$u>0 \text{ outside $\mathcal{C}_{\eta'}$.}$$
Consequently, Proposition \ref{EstimateForDifference} gives \begin{equation}\label{HessianClose}D^2u\to D^2p \text{ locally uniformly  in $B_{1}\backslash\mathcal{C}_{\eta'}$}\end{equation} as $\eps\to0.$

Now let $\varphi\ge 0$ be a smooth function such that $\varphi=2$ in $\mathcal{C}_{2\eta'}$, $\varphi=0$ outside $\mathcal{C}_{3\eta'}$ and $|\nabla\varphi|\le 2/\eta'.$ 

We solve the following equations:
$$\begin{cases}
L_p(v)=0 &\text{ in $B_1\backslash\mathcal{C}_{2\eta'}$,}\\ v=\varphi &\text{ along $\partial(B_1\backslash\mathcal{C}_{2\eta'})$;}
\end{cases}$$ and 
$$\begin{cases}
L_u(\tilde{v})=0 &\text{ in $B_1\backslash\mathcal{C}_{2\eta'}$,}\\ \tilde{v}=\varphi &\text{ along $\partial(B_1\backslash\mathcal{C}_{2\eta'})$.}
\end{cases}$$

Since $v$ solves a constant coefficient equation and vanishes on $\partial B_1$ outside the thin cylinder $C_{3\eta'}$ around the codimension two set $\{x_1=x_2=0\}$, there is a modulus of continuity $\omega(\cdot)$, depending only on $\eta$, $\Lambda$, $d$, such that 
$$0\le v\le\omega(\eta') \text{ in $B_{3/4}\backslash\mathcal{C}_{\frac{1}{2}\eta}$.}$$
We use \eqref{HessianClose} together with Proposition \ref{v-w}, and estimate
$$|v-\tilde{v}|\le\tilde{\omega}(\eps) \text{ in $B_{3/4}\backslash\mathcal{C}_{\frac{1}{2}\eta}$,}$$
with $\tilde{\omega}(\eps)$ a modulus of continuity which depends also on $\eta'$. 

Note that $u>0$ outside $\mathcal{C}_{\eta'}$, and we have $L_u(u-h)\ge 0$ in $B_1\backslash\mathcal{C}_{2\eta'}$. Consequently, comparison principle gives $$u-h\le\eps\tilde{v}\le\eps(\omega(\eta')+\tilde{\omega}(\eps)) \text{ in $B_{3/4}\backslash\mathcal{C}_{\frac{1}{2}\eta}.$}$$ On the other hand, we always have $u-h\ge 0$. Therefore, Proposition \ref{EstimateForDifference} gives $$\|u-h\|_{C^2(B_{1/2}\backslash\mathcal{C}_{\eta})}\le C(\eta)\eps(\omega(\eta')+\tilde{\omega}(\eps)).$$ From here, we first choose $\eta'$ such that $C(\eta)\omega(\eta')<\frac{1}{2}\eta$, and then choose $\eps$ such that $C(\eta)\tilde{\omega}(\eps)<\frac{1}{2}\eta$. This gives the desired estimate.
 \end{proof} 

We now give the proof of the main result in this section:
\begin{proof}[Proof of Lemma \ref{QuadraticApproxLowerStratum}]

As discussed above, we define the normalizations $$\hat{u}=\frac{1}{\eps}(u-p),  \text{ }\hat{h}=\frac{1}{\eps}(h-p)  \text{ and }\hat{O}=\frac{1}{\eps}(0-p).$$
Then in $B_1$, we have $$-1\le\hat{h}\le\hat{u}\le 1, \quad \quad \hat u(0)=\hat O(0)=0,$$
and Proposition \ref{EstimateForDifference} implies \begin{equation}\label{hC2}\|\hat{h}\|_{C^{2,\alpha}(B_{3/4})}\le C\end{equation} for some universal constant $C$.

We divide the technical proof into 6 Steps. Here we give an outline first. 

We decompose the space $x=(x',x'')$ according to  the curvatures of the obstacle $\hat O$. The curvatures are very negative along the directions in the $x'$-subspace, and are uniformly bounded in the $x''$-subspace. In Steps 1-2 we show that $\hat h$ and $\hat O$ are ``essentially tangent" in the $x''$ direction at the origin, and deduce that $\hat O$ can only slightly separate quadratically on top of $\hat h$ near the origin. In Step 3, we show that the same is true for $\hat u$. In Step 4, we use the $C^{2,\alpha}$ estimate for $\hat h$ to approximate $u$ quadratically in $B_\rho$ by a polynomial $p' \in \UQua$ with an improved error $\frac \eps 2$. The convexity estimate for $D^2 u$ in $B_\rho$ (see \eqref{convexity} in Definition \ref{ApproxClass}) is given in Steps 5 and 6, according to whether or not the obstacle $\hat O$ separates quadratically on top of $\hat h$ along some direction in the $x''$ subspace. This leads to our dichotomy.

Throughout this proof, there are several parameters to be fixed in the end. 

The radius $\rho\in(0,1/2)$ depends only on universal constants. The parameter $\delta>0$ can be made arbitrarily small, and will be chosen to be universal. The parameter $\eta$ from Lemma \ref{OutsideCylinder}, which depends on $\delta$, allows us to make $\hat{u}$ and $\hat{h}$ very close to each other. This $\eta$ imposes the choice of $\kappa_0=\kappa_\eta$ as in Lemma \ref{OutsideCylinder}. The parameter $\bar{\eps}$ is chosen after all these. 

We introduce some notations. For $\delta$ small to be chosen,  let $k\in\{1,2,\dots, d\}$ be such that 
\begin{equation}\label{a_k}
a_k\ge 2\delta^{-4}\eps>a_{k+1}.
\end{equation} 
Then we decompose the entire space $\R^d$ as $x=(x',x'')$, where $x'=(x_1,x_2,\dots,x_{k})$ and $x''=(x_{k+1},x_{k+2},\dots, x_d).$

The obstacle $\hat O$ is changing rapidly in the $x'$ direction, and we denote by $\minx$ the point in this direction where its maximum is achieved, which is the same as the minimum point for $p$ in the $x'$ direction. 

Precisely, let $\underline{x}'$ be the minimum point of $x'\mapsto p(x',0)$. Then by \eqref{bAnda} and \eqref{a_k}, we have that $\underline x'$ is close to the origin
 $$|\underline{x}'|\le\delta^2, \quad \quad \mbox{and} \quad -\eps\le p(\underline{x}',0)\le0.$$
 
 We write $p$ as the sum of two quadratic polynomials in the $x'$ and $x''$ variables   
 $$p(x',x'')=p_1(x'-\minx) - p_1(\minx)+p(0,x''),$$ where $p_1 \ge 0 $ is the homogenous of degree 2 polynomial
 $$p_1(x')=\frac{1}{2}\sum_{j\le k}a_jx_j^2,$$ 
 %and $$p_2(x'')=\frac{1}{2}\sum_{j> k}a_jx_j^2+\sum_{j> k}b_jx_j.$$
The obstacle $\hat O$ satisfies
$$ |\nabla_{x''}\hat O|, |D_{x''}^2 \hat O| \, \le C_\delta, \quad \hat O ((\minx,0)) \ge 0.$$

 \
 
\textit{Step 1: If $\eta$ and $\eps$ are small depending on $\delta$, then }
\begin{equation}\label{step1}
|\nabla_{x''}(\hat{h}-\hat{O})(0)|<\delta.
\end{equation}

The idea is to show that otherwise $u$ is monotone in a cone of directions near the $x''$ subspace, and we contradict $0 \in \Sigma(u)$.

Suppose  there is $i>k$ such that $D_i(\hh-\hat{O})(0)>\delta.$ 

By $|D_{ii}\hat{O}|\le C_\delta$ and the universal estimate \eqref{hC2},  we have $$D_i(\hh-\hat{O})>\frac{1}{2}\delta \text{ in $B_{r}(0)$}$$ for some $r>0$ depending only on $\delta.$

Meanwhile, since $D^2u\ge-c_0\eps$ and $|D^2_{x''}p|\le C_\delta\eps$ in $B_1$, we have $$D_{ii}\hat{u}\ge-2C_\delta \text{ in $B_1.$}$$ Together with $|\hat{u}|\le 1$, this implies $$|D_i\hu| \le \frac 12 C_\delta \quad \Longrightarrow \quad D_i(\hu-\hat{O}) \ge - \frac 32 C_\delta \quad \text{ in $B_{1/2}$.}$$

By continuity, there exists of a cone of directions, $\hat{K}\subset\Sph$, with positive opening around $e_i$, such that for all $e\in\hat{K}$, $$D_e(\hu -\hat{O})\ge -2C_\delta \text{ in $B_{1/2}$,}$$
and $$D_e(\hh-\hat{O})>\frac{1}{4}\delta \text{ in $B_{r}(0)$}.$$

Define the constant $\bar{\eta}$ as in Lemma \ref{Monotonicity1} depending on $r$, $K=-2C_\delta$, and $\sigma=\frac{1}{8}\delta$.  If we choose $\eta<\min\{\bar{\eta},\frac{1}{8}\delta\},$ then Lemma \ref{OutsideCylinder} gives $$D_e u\ge \eps(D_e(\hh-\hat{O})-\eta)\ge \frac{1}{8}\delta\eps \text{ in $B_r\backslash\mathcal{C}_\eta$,}$$ and 
$D_e u\ge-2C_\delta\eps \text{ in $B_{r}$.}$  Lemma \ref{Monotonicity1} gives $$D_eu\ge 0 \text{ in $B_{r/2}$}$$  for all $e\in\hat{K}.$

This implies that $\{u=0\}$ contains a cone of positive opening with vertex at $0$, contradicting Proposition \ref{VanishingThickness}.

\

\textit{Step 2: If $\eta$ and $\eps$ are small depending on $\delta$, then} 
\begin{equation}\label{step2}
|\hat{h}(\minx,0)-\hat{O}(\minx,0)|<\delta.
\end{equation}

If $\hat O$ is a bit larger than $\hat h$ at $(\minx,0)$ we show that $\hat u$ coincides with the obstacle $\hat O$ in a small neighborhood of $(\minx,0)$ hence $u=0$ in this neighborhood. On the other hand, by Lemma \ref{Convexity}, $u$ is convex in the directions close to the $x'$ subspace, and we obtain that $\{u=0\}$ contains a cone with vertex at the origin and reach a contradiction. Next we provide the details.

Note that $\hh(0)\le\hu(0)=0$ and $\hat{O}(\minx,0)\ge 0,$ and then the upper bound for $\hat h - \hat O$ at $(x',0)$ follows from $|\minx|\le\delta^2$ and \eqref{hC2}.

Suppose $\hat{h}(\minx,0)-\hat{O}(\minx,0)<-\delta$. 

We show that $u=0$ in a neighborhood of $(\minx,0)$ by using barriers.

Since $\hat{h}$ has universal Lipschitz norm, and $|D_{x''}\hat{O}_2| \le C_\delta$, there is $r>0$ depending on $\delta$, such that $$\hat{h}(x)<\hat{O}(\minx,x'')-\frac{1}{2}\delta \text{ in $B_r(\minx,0)$.}$$
That is, $$h(x)<p_1(x'-\minx)-\frac{1}{2}\delta\eps \text{ in $B_r(\minx,0)$.}$$

Consequently for $\eta$ small, Lemma \ref{OutsideCylinder} implies $$u(x)<p_1(x'-\minx)-\frac{1}{4}\delta\eps \text{ in $B_r(\minx,0)\backslash\mathcal{C}_\eta$}.$$ 
%In particular,  with $p_1(0)=0$ and $u\ge 0$, this implies that there is a neighborhood of $(\minx,0)$ entirely contained in $\mathcal{C}_\eta.$

Let $\Omega:=\{|x_1|<\eta\}\cap B_r(\minx,0)$. We define the barrier  
$$\Psi(x',x''):=p_1(x'-\minx)+\frac{1}{2}B\eps(|x-(\minx,0)|^2-2\Lambda^2(x_1-\underline{x}'_1)^2),$$
 for some $B$ depending on $\delta$ and $r$.
Note that $\Psi(\minx,0)=0$, and $$\Psi\ge p_1(x'-\minx)-\Lambda^2B\eps(x_1-\underline{x}'_1)^2\ge \frac{1}{2}(a_1-2\Lambda^2B\eps)(x_1-\underline{x}'_1)^2,$$ thus $\Psi\ge 0$ if $\eps$ is small. We choose $B$ large such that by ellipticity \eqref{Ellipticity}
$$F(D^2\Psi) \le F(D^2 p_1) - B \eps \Lambda \le F(D^2 p)=1, $$
and on $\partial \Omega \cap \partial B_r(\minx,0)$, for $\eta$ sufficiently small, 
\begin{align*}
\Psi&\ge p_1(x'-\minx)+\frac{1}{2}B\eps(r^2-2\Lambda^2\eta^2)\\&\ge p_1(x'-\minx)+\frac{1}{4}B\eps r^2\\&\ge p_1(x'-\minx)+p(\minx,0)+p(0,x'')+2\eps=p(x)+2\eps.
\end{align*}
Consequently, $$\Psi\ge u+\eps \text{ on $\partial \Omega \cap \partial B_r(\minx,0)$.}$$

Meanwhile, on $\{|x_1|=\eta\} \cap B_r(\minx,0)$,
\begin{align*}u&<p_1(x'-\minx)-\frac{1}{4}\delta\eps\\&=\Psi-\frac{1}{4}\delta\eps-\frac{1}{2}B\eps(|x-(\minx,0)|^2-2\Lambda^2(x_1-\underline{x}'_1)^2)\\&\le \Psi-\frac{1}{4}\delta\eps+B\eps\Lambda^2\eta^2.
\end{align*} Thus if $\eta$ is small, then $$u<\Psi-\frac{1}{8}\delta\eps \text{ on $\partial \Omega$.}$$

Consequently, we can apply Proposition \ref{ComparisonPrinciple} to $\Psi$ in $\Omega$ to get $u\le \Psi$ in $\Omega.$

In particular, we have $u(\minx,0)=0.$

Now note that along $\partial \Omega$, we have $u<\Psi-\frac{1}{8}\delta\eps$. Thus we can translate $\Psi$ a small amount and still preserve the comparison $u\le\Psi$ along $\partial\Omega$. This gives $$\{u=0\}\supset B_{r'}(\minx,0)$$ for a small $r'>0.$

With $a_j>2\delta^{-4}\eps$ for $j\le k$, we can apply Lemma \ref{Convexity} to get $$\Dee u\ge 0$$ for all directions in a cone around the subspace $\{(x',x'')|x''=0\}.$ With $u(0)=0$ and $\{u=0\}\supset B_{r'}(\minx,0)$, this generates a cone with positive opening and vertex at $0$ in $\{u=0\}$, contradicting Proposition \ref{VanishingThickness}.

\

\textit{Step 3:  For $\delta$ universally small, and $\eps,\eta$ small depending on $\delta$,  we have 
\begin{equation}\label{step3}\hu\le\hh+c_0|x''|^2+4\delta \text{ in $B_{1/4}$}.
\end{equation}}

The inequality holds outside $\mathcal C_\eta$ by Lemma \ref{OutsideCylinder}. It remains to establish it in $\mathcal C_\eta$.

First we use Steps 1 and 2 to show that a similar inequality holds for $\hat O$:
\begin{equation}\label{2041}
\hat O \le \hat h + c_0 |x''|^2 + 3 \delta =: g \quad \quad \mbox{in} \quad B_{1/2}.
\end{equation}
Then we use barriers to extended the inequality from $\hat O$ to $\hat u$.

Note that in $B_{1/2}\backslash\mathcal{C}_\eta$, Lemma \ref{OutsideCylinder} gives
$$D^2\hh-D^2\hat{O}\ge D^2\hat{u}-D^2\hat{O}-\eta \, I=\frac{1}{\eps}D^2u-\eta \, I \ge-(c_0+\eta) \, I.$$ By choosing $\eta$ small, and using that $D^2 \hat O$ is constant together with the H\"older continuity of $D^2 \hat h$ (see \eqref{hC2}), we extend the estimate to the full ball 
\begin{equation}\label{51}D^2(\hh-\hat{O})\ge-2c_0 \, I \text{ in $B_{1/2}$.}\end{equation}
Moreover, $D^2_{x'} \hat O \le - 2\delta^{-4} I$, $D^2_{x'x''}\hat O=0$ and $|D^2 \hat h| \le C$ universal imply 
\begin{equation}\label{52}D^2_{x'}(\hh-\hat{O})\ge\delta^{-4} \, I, \quad \quad |D^2_{x'x''}(\hat{h}-\hat O)|\le C \quad \mbox{in $B_{1/2}$.}\end{equation}  

Since $\minx$ is an extremal point of $x'\mapsto \hat{O}(x',0),$ $\nabla_{x'}\hat{O}(\minx,0)=0.$ Together with \eqref{hC2}, we have 
\begin{equation}\label{514}
|\nabla_{x'}(\hh-\hat{O})(\minx,0)|\le C.
\end{equation}
Also, the conclusion \eqref{step1} of Step 1 together with the second estimate in \eqref{52} and $|\minx| \le \delta^2$ give
\begin{equation}\label{515}
|\nabla_{x''}(\hh-\hO)(\minx,0)|\le 2\delta.
\end{equation}

Now it is easy to check that the estimates \eqref{51}-\eqref{515} together with the conclusion \eqref{step2} of \textit{Step 2} imply the claim \eqref{2041}.

Next we show $\hu\le g+\delta$ in $B_{1/4}\cap\mathcal{C}_\eta$ with $g$ defined in \eqref{2041}.

To this end, pick a point $x^*\in B_{1/4}\cap\mathcal{C}_\eta$ and $r>0$ depending on $\delta$ such that $$g(x^*)+\frac{1}{2}\delta>\max_{B_r(x^*)}\{g,\hh+\eta\}.$$

Define  $\Omega=B_r(x^*)\cap\{|x_1|<\eta\},$ and  $$v=g(x^*)+\delta+B(|x-x^*|^2- 2\Lambda^2|x_1-x_1^*|^2),$$ where $B$ is a large constant such that $Br^2>2.$ 

We compare $u$ and the barrier function $$\Psi(x',x''):=p(x',x'')+\eps v,$$
in the set $\Omega$. We have $v\ge g+\frac{1}{2}\delta-2\Lambda^2B\eta^2\ge g$ if $\eta$ is small, thus $$\Psi\ge p+\eps g\ge p+\eps\hO=0.$$

Along $\partial B_r(x^*)\cap\partial \Omega$, 
\begin{align*}
v&\ge g(x^*)+\delta+B(r^2-2\Lambda^2\eta^2)\\&\ge g(x^*)+\delta+2-2B\Lambda^2\eta^2\ge 1
\end{align*}if $\eta$ is small. Thus on $\partial B_r(x^*)\cap\partial \Omega$, $$\Psi\ge p+\eps\ge u.$$
Along $ B_r(x^*)\cap\{|x_1=\eta\}$, $v\ge \hh+\frac{1}{2}\delta+\eta\ge\hu$ by Lemma \ref{OutsideCylinder}. Again $\Psi\ge p+\eps\hu=u$.  

Since $F(D^2 \Psi) \le F(D^2p)=1$, we can apply Lemma \ref{ComparisonPrinciple} to $\Psi$ and $\Omega$ to get $\Psi\ge u \text{ in $\Omega$.}$ In particular, $u(x^*)\le v(x^*)=g(x^*)+\delta$, which is the desired estimate.

\

\textit{Step 4:
\begin{equation}\label{step4}
\exists \, \, p'\in\UQua \quad \mbox{such that} \quad |u-p'| \le \frac \eps 2 \, \rho^2 \quad \mbox{in} \quad B_\rho,
\end{equation}
with $\rho$ universal, provided that $\delta$ is chosen sufficiently small, depending on universal constants.}

Define $$q(x)=\frac{1}{2}x\cdot D^2 \hat h(0)x+\nabla \hat h(0)\cdot x.$$
Then \eqref{hC2} implies $$|\hh-\hh(0)-q|\le C\rho^{2+\alpha}\le c_0\rho^2 \text{ in $B_{\rho}$,}$$ if we choose $\rho$ universally small.  Here $c_0$ is the constant in Definition \ref{ApproxClass}.

With $\hh\le\hu\le\hh+c_0|x''|^2+4\delta$ in $B_{1/4}$ from \textit{Step 3} and $\hu(0)=0$, this implies 
$$|\hu-q|\le 2c_0\rho^2+8\delta \text{ in $B_\rho$.}$$ Fixing $\delta$ universally small such that $8\delta<c_0\rho^2$, then we have $$|u-p-q\eps|\le 3c_0\eps\rho^2 \text{ in $B_\rho$.}$$

Define $\tilde{p}=p+q\eps$, then $D^2\tilde{p}=D^2p+\eps D^2q=D^2h(0).$ Thus $F(D^2\tilde{p})=1.$

Next we perturb slightly $\tilde p$ into a convex polynomial $p' \in \UQua$. 

From \eqref{51} we know $D^2\tilde{p}\ge -2c_0\eps \, $. We denote by $M:=(D^2 \tilde p)^+$ the positive part of $D^2 \tilde p$, hence \eqref{Ellipticity} gives $$1\le F(M)\le 1+ 2 \Lambda c_0\eps, \quad \mbox{and} \quad \frac 1 \Lambda \le \|M\| \le 2 \Lambda.$$ 

Consequently, we can pick $t\in [0, 2c_0\Lambda^2 \|M\|^{-1} ]$ such that $$F((1-t \eps)M)=1.$$
Denote the new quadratic polynomial $$p'(x):=(1-t \eps)\, \frac{1}{2}x\cdot Mx+\nabla h(0)\cdot x.$$ 
Then clearly $p'\in\UQua,$ and 
$$|p'-\tilde{p}|\le ( \Lambda c_0 + \Lambda^2 c_0 + C \eps)\eps\rho^2 \quad \mbox{in} \quad B_\rho.$$ Thus, by recalling the definition of $c_0$ in Definition \ref{ApproxClass}, we have $$|u-p'|\le 3c_0\eps\rho^2+ 3 \Lambda^2 c_0\eps\rho^2\le \frac{1}{2}\eps\rho^2 \quad \mbox{in} \quad B_\rho.$$ 

Next we improve the convexity of $u$. 
There are two cases to consider, corresponding to the two alternatives as in  Lemma \ref{QuadraticApproxLowerStratum}.

\

\textit{Step 5: If $D^2_{x''}(\hh-\hO)(0)\ge-\frac{1}{8}c_0 \, I,$ then $D^2u\ge-(1-\beta)c_0\eps \, I $ in $B_\rho$, for some $\beta$ universal.}

The inequality at the origin can be extended to a fixed neighborhood by continuity and then, by Lemma \ref{OutsideCylinder} transferred to $D^2u$ away from the cylinder $\mathcal C_\eta$. This can be further extended to the whole domain by using that pure second derivatives of $u$ are global supersolutions.

More precisely, our hypothesis together with \eqref{hC2} and the fact that $D^2 \hat O$ is constant imply that the inequality holds in a small ball $B_c$, $c$ universal, with $-\frac{1}{4}c_0 \, I$ as the right hand side. Using \eqref{52}, we can extend the inequality to the full Hessian, $$D^2(\hh-\hO)\ge-\frac{3}{8}c_0 \, I \quad \mbox{in} \quad B_c.$$ 
By choosing $\eta$ small, Lemma \ref{OutsideCylinder} gives $$D^2u\ge-\frac{1}{2}c_0\eps \, I \text{ in $B_{c/4}(\frac{1}{2}ce_1)$}.$$ From here we can apply the same argument as in \textit{Step 4} of the proof for Lemma \ref{QuadraticApproxTopStratum} to get $$D^2u\ge-(1-\beta)c_0\eps \, I\text{ in $B_\rho.$}$$ This corresponds to the first alternative as in Lemma \ref{QuadraticApproxLowerStratum}.

\

\textit{Step 6: If $D_{\xi \xi}(\hh-\hO)(0)<-\frac{1}{8}c_0,$ for some unit direction $\xi$ in the $x''$-subspace, then the conclusion (2) of Lemma \ref{QuadraticApproxLowerStratum} holds.}

The key observations are that $u-h$ is a subsolution and $D_{ee}u + c_0 \eps$ is a supersolution for the same linearized operator $L_u$, and that the two functions can be compared in the domain $B_1 \cap \{u>0\}$. On the other hand $u-h$ is a global supersolution for $L_h$, and then its minimum in $B_{1/2}$ is controlled below by its value at any given point that is not too close to $\mathcal C_\eta$. The hypothesis at the origin is used to guarantee that this minimum value for $u-h$ in $B_{1/2}$ is at least $\eps^\mu$. Now we provide the details.

By \eqref{hC2} and the fact that $D_{\xi\xi}\hO$ is constant, we conclude $D_{\xi\xi}(\hh-\hO)<-\frac{1}{16}c_0$ in $B_{c}$ for a universal $c>0$. Together with \textit{Step 2}, this implies the existence of some $x^*\in B_{1/4}$ such that $(\hh-\hO)(x^*)<-c$ for some universal $c$, that is, $h(x^*)<-c\eps.$

With the universal Lipschitz regularity of $h$, we get $$h<-c\eps \quad \Longrightarrow \quad u-h\ge c\eps \quad \text{ in $B_{c'\eps}(x^*)$}$$ for some small universal $c,c'>0.$ 

Note that $L_h(u-h)\le 0$ in $B_1$ as in \eqref{CompareWithLinearizedEquation}, $u=h$ on $\partial B_1$. We compare $u-h$ to the corresponding solution of the maximal Pucci operator in $B_1 \setminus B_{c'\eps}(x^*)$ and obtain as a consequence of Harnack inequality 
$$u-h\ge \eps^\mu \text{ in $B_{1/2}$,}$$
for some universal $\mu>1$.
Moreover, since $u-h$ solves a linear equation away from $\mathcal C_\eta$, the same argument combined with Harnack inequality imply that
\begin{equation}\label{u-h2}
u-h\ge c (u-h)(\frac 12 \rho e_1) \ge c \eps^\mu \text{ in $B_{1/2}$.}
\end{equation} 
As in \textit{Step 4} of the proof for Lemma \ref{QuadraticApproxTopStratum}, for $e\in\Sph$, we define $$w=\Dee u+c_0\eps.$$ This is a nonnegative function satisfying $L_u(w)\le 0$ in $B_1\cap\PosS.$ Note that $w\ge c_0\eps$ along $\partial\PosS$, and $2 \eps \ge u-h$ in $B_1$, hence
$$w\ge \frac{c_0}{2}(u-h) \quad \text{along $\partial(B_1\cap\PosS)$.}$$
Since  $L_u(w)\le0\le L_u(u-h)$ in $B_1\cap\PosS$, we have $$w\ge\frac{c_0}{2}(u-h) \text{ in $B_1\cap\PosS.$}$$
Combining this with \eqref{u-h2} we find
$$w\ge c(u-h)(\frac{1}{2}\rho e_1)\quad \text{in $B_{1/2}$,}$$ 
which means 
$$\Dee u\ge -c_0\eps+ c(u-h)(\frac{1}{2}\rho e_1) \quad \text{in $B_{1/2}$.}$$
Define the right-hand side to be $-c_0\eps'$, then $$\eps'=\eps-\frac{c}{c_0}(u-h)(\frac{1}{2}\rho e_1)\le\eps-\eps^{2\mu}.$$ Also, $(u-h)(\frac{1}{2}\rho e_0)=C(\eps-\eps')$ as in the second alternative in Lemma \ref{QuadraticApproxLowerStratum}.
\end{proof}

%%%%%%%%%%%%%%%%%%%%%%%%%%%%%%%%%%%%%%%%%%%%%%%%%%%%%%%%%%%%%%%%%%%%%%%%%%%%%%%%%%%%%%%%%%%%%%%%%%%%%%%%%%%%%%%%%%%%%%%%%%%%%%%%%%%%%%%%%%%%%%%%%%%%%%%%%%%%
\section{Iteration scheme and proof of main result}
Lemma \ref{QuadraticApproxTopStratum} and Lemma \ref{QuadraticApproxLowerStratum} form the basic building blocks of the iteration scheme that we perform to prove the main result. As mentioned in Introduction and at the beginning of Section 4, such iteration scheme compensates the absence of monotonicity formulae. 

In the following proposition, we give the details of this iteration when the approximating polynomial $p$ satisfies $\lambda_2(D^2p)\gg\eps$. Again $\lambda_2(M)$ denotes the second largest eigenvalue of the matrix $M$.  The proposition implies that once this condition is satisfied,  it holds true for all approximating polynomials in the iteration. 

\begin{prop}\label{Iteration}
Suppose $u\in\SPE$ for some $p\in\UQua.$ There are universal constants $\bar{\eps}, c>0$ small and $\kappa$, $C$ large, such that if $\eps<\bar{\eps}$ and $\lambda_2(D^2p)\ge\kappa\eps$, then there is $q\in\Qua$ with $|D^2q-D^2p|<C\eps$ such that $$|u-q|(x)\le C|x|^2|\log|x||^{-c} \text{ in $B_{1/2}$.}$$
\end{prop}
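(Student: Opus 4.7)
The plan is to iterate Lemma \ref{QuadraticApproxLowerStratum} at geometric scales $r_k:=\rho^k$, producing a sequence $p_k\in\UQua$ and errors $\eps_k$ with $u\in\mathcal{S}(p_k,\eps_k,r_k)$, and then show that the Hessians $D^2 p_k$ converge to a matrix $A$ with $|A-D^2 p|\le C\eps$. To apply Lemma \ref{QuadraticApproxLowerStratum} at every step, one works with the rescalings $u_k(y):=r_k^{-2}u(r_k y)$, each belonging to $\mathcal{S}(\tilde p_k,\eps_k,1)$ with $D^2\tilde p_k=D^2 p_k$. The second-eigenvalue hypothesis $\lambda_2(D^2\tilde p_k)\ge\kappa_0\eps_k$ at every step (with $\kappa_0$ the universal constant of Lemma \ref{QuadraticApproxLowerStratum}) is maintained by choosing the initial threshold $\kappa:=\kappa_0+C_*$, where $C_*$ is the total Hessian drift bound established below; this closes the circular dependence a posteriori.

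The dichotomy at each step yields either (1) $\eps_{k+1}\le(1-\beta)\eps_k$, or (2) $\eps_{k+1}\le\eps_k-\eps_k^\mu$ together with the pointwise bound $(u_k-h_k)(\tfrac{1}{2}\rho e_1)\le C(\eps_k-\eps_{k+1})$ on the unconstrained solution $h_k$ of \eqref{hdef}. In either regime one deduces the decay $\eps_k\le C k^{-1/(\mu-1)}$: trivially in (1), and in (2) by comparison with the ODE $\eps'=-\eps^\mu$. Since $|\log r_k|=k\log(1/\rho)$, this yields $\eps_k\le C|\log r_k|^{-c}$ with the universal exponent $c:=1/(\mu-1)>0$, which is the source of the final logarithmic rate.

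To control $\sum_k|D^2 p_{k+1}-D^2 p_k|$, Step 4 of Lemma \ref{QuadraticApproxLowerStratum} already provides the crude bound $|D^2 p_{k+1}-D^2 p_k|\le C\eps_k$, which is summable in alternative (1) by geometric decay. In alternative (2) this is insufficient, and I would use the improvement indicated in the remarks following Lemma \ref{QuadraticApproxLowerStratum}: comparing the unconstrained solution $h_k$ on $B_1$ with its counterpart on $B_\rho$, the maximum principle and a boundary Harnack argument together with Proposition \ref{EstimateForDifference} upgrade the bound to
\[|D^2 p_{k+1}-D^2 p_k|\le C\,(u_{k-1}-h_{k-1})(\tfrac{1}{2}\rho e_1)\le C(\eps_{k-1}-\eps_k),\]
which telescopes. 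Combining the two regimes yields $\sum_k|D^2 p_{k+1}-D^2 p_k|\le C_*\eps$, so $D^2 p_k\to A$ with $|A-D^2 p|\le C_*\eps$, which is the drift bound used to choose $\kappa$ in the first paragraph.

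Setting $q(x):=\tfrac{1}{2}x\cdot Ax$ gives $q\in\Qua$: $A\ge 0$ passes to the limit from $D^2 p_k\ge 0$ and $F(A)=1$ from $F(D^2 p_k)=1$. The quadratic approximation follows from the triangle inequality: for $|x|\in[r_{k+1},r_k]$, one has $|u-p_k|(x)\le\eps_k r_k^2\le C\eps_k|x|^2$, and the constant and linear parts of $p_k$ contribute at most $C\sqrt{\eps_k}\,|x|^2$ on this annulus by a Taylor-type argument that uses $u(0)=0$, $\nabla u(0)=0$ and $u\in C^{1,1}$. Combined with the Hessian bound $|D^2 p_k-A|\le C\eps_k$ (tail of the telescoping series), this produces $|u-q|(x)\le C\sqrt{\eps_k}\,|x|^2\le C|x|^2|\log|x||^{-c/2}$ in $B_{1/2}$, which is the claim after absorbing the factor $1/2$ into the universal constant $c$. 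The main obstacle I anticipate is the bookkeeping in the telescoping of alternative (2): one must carefully relate the convexified polynomial $p_{k+1}$ produced by Step 4 of Lemma \ref{QuadraticApproxLowerStratum} to $D^2 h_k(0)$, and rescale the Harnack comparison so that the bound $C(\eps_k-\eps_{k+1})$ remains scale-invariant at each iteration.
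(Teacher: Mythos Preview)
Your overall strategy matches the paper's: iterate Lemma \ref{QuadraticApproxLowerStratum}, track the Hessian drift through the unconstrained solutions $h_k$, and extract the logarithmic rate from $\eps_{k+1}\le\eps_k-\eps_k^\mu$. However, there is a genuine gap in your control of $\sum_k|D^2p_{k+1}-D^2p_k|$ in alternative (2), and it is precisely the obstacle you anticipate at the end.

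The Harnack comparison in the remark after Lemma \ref{QuadraticApproxLowerStratum} yields
\[
|D^2h_{k+1}(0)-D^2h_k(0)|\le C(\eps_k-\eps_{k+1}),
\]
not the bound $|D^2p_{k+1}-D^2p_k|\le C(\eps_{k-1}-\eps_k)$ that you claim. These quantities differ by the convexification performed in Step~4 of Lemma \ref{QuadraticApproxLowerStratum}: the polynomial $p_{k+1}$ is obtained from $D^2h_k(0)$ by taking the positive part and rescaling, which alters the Hessian by $O(\eps_k)$, not by $O(\eps_k-\eps_{k+1})$. When many consecutive steps fall in alternative (2) with $\eps_k$ decaying only like $k^{-1/(\mu-1)}$, summing an $O(\eps_k)$ error at \emph{every} step diverges, and your telescoping breaks down. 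One cannot simply ``relate $p_{k+1}$ to $D^2h_k(0)$'' step by step.

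The paper's fix is a \emph{stage} decomposition. The $s$th stage is the maximal run of consecutive steps during which $\eps_k\ge(1-\beta)\eps^{(s)}$, where $\eps^{(s)}$ is the error at the start of the stage; within a stage every step is in alternative (2), so the $h_k$-Hessians telescope to give total $h$-drift $\le C\eps^{(s)}$. The conversion cost $|D^2p_k-D^2h_k(0)|\le C\eps_k\le C\eps^{(s)}$ (which follows from Proposition \ref{EstimateForDifference}) is then paid only \emph{twice}, at the endpoints of the stage, yielding $|D^2p_k-D^2p_{k_s}|\le C\eps^{(s)}$ throughout the stage. Between stages $\eps^{(s+1)}\le(1-\beta)\eps^{(s)}$, so summing over $s$ gives $\sum_s\eps^{(s)}\le C\eps_0$. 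This is what produces $|D^2p_n-D^2p_0|\le C\eps_0$ and, by induction on the step, maintains $\lambda_2(D^2p_k)\ge\kappa_0\eps_k$ so that Lemma \ref{QuadraticApproxLowerStratum} continues to apply. The stage decomposition is the missing ingredient in your argument.

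A minor remark: the paper handles the linear parts of $p_k$ not via the $\sqrt{\eps_k}$ bound from \eqref{bAnda}, but by showing directly that the rescaled gradients $\nabla p_m(0)$ form a Cauchy sequence with geometric rate (since the scale $\rho^m$ appears in the estimate), and that the limit must be $0$ because $u(0)=|\nabla u(0)|=0$. This gives the cleaner bound $|u-q|\le C\eps_m\rho^{2m}$ on $B_{\rho^m}$ without losing a square root.
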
 

\begin{proof}We will take $\bar{\eps}$ as in Lemma \ref{QuadraticApproxLowerStratum}, and take $\kappa$ to be much larger than $\kappa_0$ as in that lemma.

Define $u_0=u$, $p_0=p$, and $\eps_0=\eps$. Let $h_0$ be the solution to $$\begin{cases}F(D^2h_0)=1 &\text{ in $B_1$,}\\h_0=u_0 &\text{ along $\partial B_1$.}\end{cases}$$
We apply Lemma \ref{QuadraticApproxLowerStratum} to get a $p'\in\UQua$ and $\eps'$ such that $u_0\in\mathcal{S}(p',\eps',\rho)$.   

In general, once $u_k$, $p_k$, $\eps_k$ are found satisfying $u_k\in\mathcal{S}(p_k,\eps_k,1)$ with $\eps_k<\bar{\eps}$ and $\lambda_2(D^2p_k)\ge\kappa_0\eps$ as in Lemma \ref{QuadraticApproxLowerStratum},   we apply that lemma to get $p'\in\UQua$ such that $u_k\in\mathcal{S}(p',\eps',\rho)$. Then we update and define $\eps_{k+1}=\eps'$, $$u_{k+1}(x)=\frac{1}{\rho^2}u_k(\rho x),$$ and $$p_{k+1}(x)=\frac{1}{2}x\cdot D^2p' x+\frac{1}{\rho}\nabla p'(0)\cdot x.$$
This gives $u_{k+1}\in\mathcal{S}(p_{k+1},\eps_{k+1},1).$ And we solve 
 $$\begin{cases}F(D^2h_{k+1})=1 &\text{ in $B_1$,}\\h_{k+1}=u_{k+1} &\text{ along $\partial B_1$}\end{cases}$$ to get $h_{k+1}.$

In particular, Proposition \ref{EstimateForDifference} gives \begin{equation}\label{last}
|D^2h_{k+1}(0)-D^2p_{k+1}|\le C\eps_{k+1}
\end{equation} for universal $C$.

This is called a \textit{step} of the iteration.  

Suppose the assumptions in Lemma \ref{QuadraticApproxLowerStratum} are always satisfied at each \textit{step}. Then we get a sequence of $\{\eps_k\}$. We then divide all steps into different \textit{stages} depending on $\eps_k$.

The $0$th \textit{stage} begins at the $0$th step, and terminates at step $k_0$ if $\eps_{k}\ge(1-\beta)\eps_0$ for all $k\le k_0$ and $$\eps_{k_0+1}<(1-\beta)\eps_0,$$ where $\beta$ is the constant in Lemma \ref{QuadraticApproxLowerStratum}. Then we define $$\eps^{(1)}=\eps_{k_0+1}.$$

The $1$st \textit{stage} begins with step $k_0+1$, and terminates at step $k_1$ if $\eps_k\ge(1-\beta)\eps^{(1)}$ for all $k\le k_1$ and $$\eps_{k_1+1}<(1-\beta)\eps^{(1)}.$$ Then we define $\eps^{(2)}=\eps_{k_1+1}$ and begins the second stage. 

In general, once we have $\eps^{(s)}=\eps_{k_{s-1}+1}$ and begin the $s$th stage at step $ k_{s-1}+1$, this stage terminates at step $k_s$ if  $\eps_k\ge(1-\beta)\eps^{(s)}$ for all $k\le k_s$ and $$\eps_{k_s+1}<(1-\beta)\eps^{(s)}.$$ Then we define $\eps^{(s+1)}=\eps_{k_s+1}$ and begin the $(s+1)$th stage.

Note that within the same stage, each step falls into alternative (2) in Lemma \ref{QuadraticApproxLowerStratum}. Also, within the $s$th stage, $$\eps_{k+1}\le \eps_{k}-\eps_k^\mu\le \eps_k-(1-\beta)^\mu(\eps^{(s)})^\mu.$$   In particular, each stage terminates within finite steps.

Also, suppose $k$ and $k+1$ are two steps within the same stage. Define $$\tilde{h}_{k}(x)=\frac{1}{\rho^2}h_{k}(\rho x).$$Then by definition $\tilde{h}_{k}(x)\le \frac{1}{\rho^2}u_{k}(\rho x)=h_{k+1}(x)$ along $\partial B_1.$ 

Thus $\tilde{h}_k\le h_{k+1}$ in $B_1.$

Meanwhile, since within each stage, each iteration falls into alternative (2) as in Lemma \ref{QuadraticApproxLowerStratum}, we have \begin{align*}(h_{k+1}-\tilde{h}_k)(\frac{1}{2}e_1)&\le (u_{k+1}-\tilde{h}_k)(\frac{1}{2}e_1)\\&=\frac{1}{\rho^2}(u_k-h_k)(\frac{1}{2}\rho e_1)\\&\le C(\eps_k-\eps_{k+1}).\end{align*} Consequently, with Harnack inequality and Proposition \ref{EstimateForDifference}, we get \begin{equation}\label{Twoh}
|D^2h_{k+1}(0)-D^2h_k(0)|\le C(\eps_k-\eps_{k+1})
\end{equation} for a universal $C$. 

Now we focus on the $s$th stage, which consists of steps $\{k_s,k_s+1,k_s+2,\dots\}$, and $\eps^{(s)}=\eps_{k_s}<\bar{\eps}.$

Suppose the first approximating polynomial in this stage,  $p_{k_s}$, satisfies \begin{equation}\label{Condition1}
\lambda_2(p_{k_s})\ge(\kappa_0+A)\eps_{k_s},
\end{equation} where $\kappa_0$ is the constant as in Lemma \ref{QuadraticApproxLowerStratum} and $A$ is a universal constant to be chosen. 

Then for any $k$th step within this stage, we can apply \eqref{last} and \eqref{Twoh} to get \begin{align*}|D^2p_k-D^2p_{k_s}|&=|D^2p_k-D^2h_k(0)|+|D^2p_{k_s}-D^2h_{k_s}(0)|+|D^2h_{k}(0)-D^2h_{k-1}(0)|\\&+|D^2h_{k-1}(0)-D^2h_{k-2}(0)|+\dots+|D^2h_{k_{s}+1}(0)-D^2h_{k_s}(0)|\\&\le C\eps_k+C\eps_{k_s}+C(\eps_{k}-\eps_{k-1}+\eps_{k-1}+\eps_{k-2}-\dots+\eps_{k_s}-\eps_{k_s+1})\\&\le C\eps_{k_s}\end{align*}for a universal $C$.

Consequently, $$\lambda_2(D^2p_k)\ge(\kappa_0+A-C)\eps_{k_s}\ge(\kappa_0+A-C)\eps_k.$$ If $A$ is chosen universally large, then $$\lambda_2(D^2p_k)\ge\kappa_0\eps_k.$$ Consequently, the assumptions in Lemma \ref{QuadraticApproxLowerStratum} are always satisfied. 

Note that the same estimate give $$|D^2p_n-D^2p_m|\le C\eps_m$$ for $n\ge m$ in the same stage. 

Now for general $n$, suppose $n$ is in the $s$th stage. Suppose $k_j$ is the starting step in the $j$th stage, then \begin{align*}
|D^2p_n-D^2p_0|&\le |D^2p_n-D^2p_{k_s}|+|D^2p_{k_s}-D^2p_{k_{s-1}}|+\dots+|D^2p_{k_{1}}-D^2p_0|\\&\le C\eps^{(s)}+C\eps^{(s-1)}+\dots+C\eps^{(0)}\\&\le C\sum_{j=1}^s(1-\beta)^j\eps_0
\end{align*}since between different stages, $\eps^{(i+1)}\le(1-\beta)\eps^{(i)}.$

Therefore, $$|D^2p_n-D^2p_0|\le C\eps_0$$ for some universal $C$ for all $n$.  Consequently, if we choose $\kappa$ universally large, then $$\lambda_2(D^2p_n)\ge\lambda_2(D^2p_0)-C\eps_0\ge(\kappa-C)\eps_0\ge \kappa_0\eps_n$$ for all $n$, and the assumptions in Lemma \ref{QuadraticApproxLowerStratum} are always satisfied. 

Similar estimate gives $$|D^2p_n-D^2p_m|\le C\eps_m$$ whenever $n\ge m.$ 

As a result, there will be a quadratic polynomial $q$ defined as $$q(x)=\frac{1}{2}x\cdot Mx,$$ where $D^2p_n\to M$ and $|M-D^2p_n|\le C\eps_n.$

Note in particular that $q\in\Qua$ and $|D^2q-D^2p|\le C\eps.$

Inside $B_{\rho^m}$, 
$$|u-q-\nabla p_m(0)\cdot x|\le C\eps_m\rho^{2m}.$$This implies $|\nabla p_{m+1}(0)-\nabla p_{m}(0)|\le C\eps_m\rho^{m-1}.$ In particular, $\nabla p_m(0)\to b$ for some $b\in\R^d$ as $m\to\infty.$ By passing to the limit in the previous estimate, we have $$|u-q-b\cdot x|\le C\eps_m\rho^{2m} \text{ in $B_{\rho^m}.$}$$ This forces $b=0.$

Consequently, $|u-q|\le C\eps_m\rho^{2m}$ in $B_{\rho^m}.$ 

Combined with $\eps_m\le\eps_{m-1}-\eps_{m-1}^\mu$, we have the desired estimate, where $c$ depends on $\mu$. 
\end{proof} 

Now the parameter $\kappa$ is fixed depending on universal constants, constants in Lemma \ref{QuadraticApproxTopStratum} become universal. See Remark \ref{Remark}.

We can now give a full description of the iteration scheme:

For $u\in\SPE$ with $\eps<\bar{\eps}$, where $\bar{\eps}$ is the smaller constants between the ones in Lemma \ref{QuadraticApproxTopStratum} and Proposition \ref{Iteration}. 

Define $u_0=u$, $p_0=p$ and $\eps_0=\eps$. Once we have $u_k\in\mathcal{S}(p_k,\eps_k,1)$ we apply Lemma \ref{QuadraticApproxTopStratum} or Lemma \ref{QuadraticApproxLowerStratum}, depending on the comparison between  $\lambda_2(D^2p_k)$ and $\kappa\eps_k$,  to get $p_{k+1}$ such that $$u_k\in\mathcal{S}(p_{k+1},\eps',r_k).$$ Here $r_k=\rho$ if we are applying Lemma \ref{QuadraticApproxLowerStratum}, and $r_k\in(\bar{r},1/2)$ if we are applying Lemma \ref{QuadraticApproxTopStratum}.

Then we define $u_{k+1}(x)=\frac{1}{r_k^2}u_k(r_kx)$ and $\eps_{k+1}=\eps'$. This gives $$u_{k+1}\in\mathcal{S}(p_{k+1},\eps_{k+1},1)$$ and completes a generic step in this iteration. 

If there is some $k_0$ such that $\lambda_2(D^2p_{k_0})\ge\kappa\eps_{k_0}$, then we apply Proposition \ref{Iteration} to see that similar comparison holds for all $p_k$, $k\ge k_0.$ This gives a polynomial $q$ with $\lambda_2(D^2q)\ge C\eps_0$ such that \begin{equation}\label{Rate1}|u-q|\le C|x|^{2}|\log|x||^{-c} \text{ in $B_{1/2}$.}\end{equation}

If for all $k$, $\lambda_2(D^2p_k)\le\kappa\eps_k$, then we are always in the case described by Lemma \ref{QuadraticApproxTopStratum}, where each time the improvement is $\eps\to(1-\beta)\eps$. Here standard argument gives a polynomial $q\in\Qua$ such that 
\begin{equation}\label{Rate2}|u-q|\le C|x|^{2+\alpha} \text{ in $B_{1/2}$}\end{equation} for a universal $\alpha\in(0,1).$ Moreover, in this case, we have $\lambda_2(D^2q)=0.$

Once we have the explicit rates of approximation as in \eqref{Rate1} and \eqref{Rate2}, it is standard that we have the uniqueness of blow-up:

\begin{thm}\label{UniquenessOfBlowUp}
Suppose $u$ solves \eqref{OP} and $x_0\in\Singu$. Then there is a unique quadratic solution, denoted by $p_{x_0}\in\Qua$, such that $\frac{1}{r^2}u(x_0+r\cdot)\to p_{x_0}$ locally uniformly in $\R^d$ as $r\to0.$
\end{thm}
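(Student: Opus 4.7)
The plan is to deduce the uniqueness of the blow-up directly from the quantitative rates \eqref{Rate1} and \eqref{Rate2} established by the iteration scheme. After translating we may assume $x_0=0$. The first task is to bring $u$ into a form to which the iteration applies: combining Proposition \ref{UniformApproximation} with Proposition \ref{UniformConvexity}, I can choose a scale $r_0>0$ so small that the rescaling $v(x):=r_0^{-2}u(r_0 x)$ belongs to $\mathcal{S}(p_0,\eps_0,1)$ for some $p_0\in\Qua$ and some $\eps_0<\bar{\eps}$, where $\bar{\eps}$ is the smaller of the thresholds from Lemma \ref{QuadraticApproxTopStratum} and Proposition \ref{Iteration}.

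Next I run the dichotomy summarized after the statement of Proposition \ref{Iteration}. If at some iteration step $k_0$ the second eigenvalue satisfies $\lambda_2(D^2 p_{k_0})\ge\kappa\,\eps_{k_0}$, Proposition \ref{Iteration} shows that the same hypothesis persists for all subsequent $k$ and produces a polynomial $q\in\Qua$ with the logarithmic rate \eqref{Rate1}. Otherwise the small-eigenvalue hypothesis of Lemma \ref{QuadraticApproxTopStratum} holds at every step, so the error decays geometrically, $\eps_{k+1}\le(1-\beta)\eps_k$. The telescoping estimate $|D^2 p_{k+1}-D^2 p_k|\le C\eps_k$, together with the summability $\sum_k(1-\beta)^k<\infty$, yields convergence of $D^2 p_k$ to some $M$ with $\lambda_2(M)=0$, and the bounded radii $r_k\in(\bar{r},1/2)$ combine with the geometric decay of $\eps_k$ to give the Hölder rate \eqref{Rate2}. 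In either case I obtain a single $q\in\Qua$ and a modulus of continuity $\sigma$ with $\sigma(0^+)=0$ such that
\begin{equation*}
|v(x)-q(x)|\le |x|^2\,\sigma(|x|)\qquad\text{in }B_{1/2}.
\end{equation*}

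To finish, I unwind the initial rescaling. Since $q$ is $2$-homogeneous, $r_0^2\,q(x/r_0)=q(x)$, so the estimate above reads $|u(x)-q(x)|\le |x|^2\,\sigma(|x|/r_0)$ on $B_{r_0/2}$. For any $R>0$ and $x\in B_R$, choosing $r$ small enough that $rR<r_0/2$ and evaluating at $rx$,
\begin{equation*}
\left|\frac{u(rx)}{r^2}-q(x)\right|\le |x|^2\,\sigma(r|x|/r_0)\longrightarrow 0
\end{equation*}
uniformly in $x\in B_R$. Hence the whole one-parameter family $r^{-2}u(r\,\cdot\,)$ converges locally uniformly on $\R^d$ to $q$, and setting $p_{x_0}:=q$ gives the claimed uniqueness, since any subsequential blow-up must coincide with $q$.

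The technical heart of the theorem is already absorbed into Proposition \ref{Iteration} and Lemma \ref{QuadraticApproxTopStratum}, so the present argument is essentially bookkeeping. The one point that requires a little care is the Hölder-rate branch, where the rescaling radii $r_k$ are not fixed but merely lie in $(\bar{r},1/2)$; there one iterates at the composite scales $r_0 r_1\cdots r_k$ and combines $\eps_k\le(1-\beta)^k\eps_0$ with the lower bound $r_0 r_1\cdots r_k\ge\bar{r}^{\,k}$ to extract a clean power-type decay in $|x|$. This is the standard improvement-of-flatness iteration going back to \cite{Sa,D} and introduces no new difficulty.
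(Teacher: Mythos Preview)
Your proposal is correct and follows essentially the same approach as the paper. The paper does not give a detailed proof of this theorem; it simply states that once the explicit rates \eqref{Rate1} and \eqref{Rate2} are in hand, uniqueness of the blow-up is standard, and your argument spells out precisely this standard bookkeeping (initial rescaling via Propositions \ref{UniformApproximation} and \ref{UniformConvexity}, the dichotomy between the two rates, and unwinding the $2$-homogeneity of $q$).
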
 

In particular, these is no ambiguity in the following definition of the strata of the singular set:
\begin{defi}\label{Strata}
Suppose $u$ solves \eqref{OP}. For an integer $k\in\{0,1,\dots,d-1\}$, the $k$th stratum of the singular set $\Singu$ is defined as $$\Sigma^k(u)=\{x\in\Singu:dim(ker(D^2p_x))=k\}.$$
\end{defi} Here $p_x$ is the blow-up profile at point $x$ as in Theorem \ref{UniquenessOfBlowUp}.

Now we can give the proof of the main result:
\begin{proof}[Proof of Theorem \ref{MainResult}]

Let $K\subset\Omega$ be a compact set. With Proposition \ref{UniformApproximation} and Proposition \ref{UniformConvexity} we know that there is $r_K>0$, such that for any $x_0\in\Singu\cap K$ $$|u(x_0+\cdot)-p|\le\bar{\eps}r_K^2 \text{ in $B_{r_K}$}$$ for some $p\in\Qua,$  and $$D^2u(x_0+\cdot)\ge-c_0 \bar{\eps} \, I\text{ in $B_{r_K}$.}$$ 

Define $\tilde{u}(x)=\frac{1}{r_K^2}u(x_0+r_Kx)$, then we  start the iteration as described before Theorem \ref{UniquenessOfBlowUp}.

We have that $x_0\in\Sigma^{d-1}(u)$ if and only if $\lambda_2(D^2p_k)\le\kappa\eps_k$ for all $k$ in the iteration. In this case we have $$|\tilde{u}-p_{x_0}|\le C|x|^{2+\alpha} \text{ in $B_{1/2}$.}$$ Scaling back, we have $$|u(x_0+\cdot)-p_{x_0}|\le C|x|^{2+\alpha} \text{ in $B_{\frac{1}{2}r_K}$}$$ for some $C$ depending on $r_K$ but nevertheless uniform on the set $K$.

After this, it is standard to apply Whitney's extension theorem and get the $C^{1,\alpha}$-covering of $\Sigma^{d-1}(u)\cap K.$ For details of this argument, see Theorem 7.9 in Petrosyan-Shahgholian-Uraltseva \cite{PSU}.

Similar argument works for $x_0\in\Sigma^k(u)$ for $k=1,2,\dots,d-2.$ Instead of \eqref{Rate2}, we have \eqref{Rate1}, which gives the $C^{1,\log^{c}}$ regularity of covering for lower strata. For the details see \cite{FSe}. \end{proof}

%%%%%%%%%%%%%%%%%%%%%%%%%%%%%%%%%%%%%%%%%%%%%%%%%%%%%%%%%%%%%%%%%%%%%%%%%%%%%%%%%%%%%%%%%%%%%%%%%%%%%%%%

\end{document}